\titleformat*{\section}{\large\bfseries}
\titleformat*{\subsection}{\bfseries}
\numberwithin{equation}{section}
\newtheorem{theorem}{Theorem}[section]{\bfseries}{\it}
\newtheorem{proposition}[theorem]{Proposition}{\bfseries}{\it}
\newtheorem{lemma}[theorem]{Lemma}{\bfseries}{\it}
\newtheorem{corollary}[theorem]{Corollary}{\bfseries}{\it}
\newtheorem{definition}{Definition}[section]{\bfseries}{\it}
\newtheorem{example}{Example}[section]{\bfseries}{\rmfamily}
{\bfseries}{\it}
\theoremstyle{definition}
\newtheorem{remark}{Remark}[section]{\bfseries}{\rmfamily}
\newcommand{\proofbox}{\qed}
\newcommand{\tends}{\rightarrow}
\newcommand{\R}{\mathbb{R}}
\newcommand{\N}{\mathbb{N}}
\DeclareMathOperator{\supp}{supp}
\DeclareMathOperator*{\argmax}{arg\,max}
\DeclareMathOperator*{\conv}{conv}
\renewcommand{\dim}{d}
\newcommand{\norm}[1]{\lVert#1\rVert}
\title{Regularization of Stationary Second-order Mean Field Game Partial Differential Inclusions}
\author{Yohance A. P. Osborne\thanks{Department of Mathematical Sciences, Durham University, Stockton Road, DH1 3LE Durham, United Kingdom (\texttt{yohance.a.osborne@durham.ac.uk}).} 
	\and
	Iain Smears \thanks{Department of Mathematics, University College London, Gower
		Street, WC1E 6BT London, United Kingdom (\texttt{i.smears@ucl.ac.uk}).}}
\begin{document}

\maketitle



\begin{abstract}
Mean field Game (MFG) Partial Differential Inclusions (PDI) {are generalizations of} the system of Partial Differential Equations (PDE) of Lasry and Lions to situations where players in the game {may have possibly nonunique} optimal controls, and the resulting Hamiltonian is not required to be differentiable.
{We study second-order MFG PDI with convex, Lipschitz continuous, but possibly nondifferentiable Hamiltonians, and their approximation by systems of classical MFG PDE with regularized Hamiltonians.}
Under very broad conditions on {the problem} data, we show that{, up to subsequences, the solutions of the regularized problems converge to solutions of the MFG PDI. In particular, we show the convergence of the value functions in the $H^1$-norm and of the densities in $L^q$-norms.}
{Under stronger hypotheses on the problem data, we also show rates of convergence between the solutions of the original and regularized problems, without requiring any higher regularity of the solutions.
We give concrete examples that demonstrate the sharpness of several aspects of the analysis.}
\end{abstract}

\section{Introduction}\label{sec-introduction}
Mean Field Games (MFG), which {were} introduced by Lasry \& Lions \cite{lasry2006jeuxI,lasry2006jeuxII,lasry2007mean} and independently by Huang, Caines \& Malham\'e \cite{huang2006large}, {model the Nash equilibria}  {of} {dynamic} games of optimal control in which there are a large number of players.
{MFG encompass a wide range of models for handling various situations, yet for simplicity we concentrate here on a standard model of MFG~\cite{lasry2007mean}, where the players have identical stochastic dynamics with idiosyncratic noise which are independent of the density, and where the controls enter only in deterministic drift terms and in part of the running of the costs. 
	Under suitable assumptions, the} equilibria {are described by the solutions of a} system {of partial differential equations (PDE)} that consists of a Hamilton--Jacobi--Bellman (HJB) {equation, for} the value function of the optimal control problem faced by each player, and a Kolmogorov--Fokker--Planck (KFP) {equation, for} the distribution of {the} players {across the} state space of the game.
There is a {wide literature} on mean field games;  we refer the reader to~\cite{GueantLasryLions2003,GomesSaude2014,gomes2016regularity,achdou2020mean} for extensive reviews on the theory and applications of these systems.

{In many applications, the underlying optimal control problem can give rise to nonuniqueness of the optimal controls, when expressed in feedback form. This is closely related to the fact that in general, the Hamiltonian of the corresponding HJB equation can be nondifferentiable with respect to the derivatives of the value function.
	As a result, such situations fall outside the scope of many existing works on MFG, where differentiability of the Hamiltonian is often assumed.
	We stress that this is no mere technical issue, since the Nash equilibria can assume a more complicated structure in the more general case of nondifferentiable Hamiltonians and nonunique optimal controls. 
	This is because even for symmetric games, Nash equilibria are not necessarily symmetric, so there can be situations where it is necessary for the players at a given state to make distinct choices of optimal controls from each other in order to maintain an equilibrium.
	As a result of the possibly more complex structure of the Nash equilibria, it is natural to expect that there can be substantial qualitative differences between the cases of differentiable and nondifferentiable Hamiltonians, which is something that we will demonstrate in this work.}

{One approach to formulating MFG that allows for these more complicated Nash equilibria comes from the probabilistic literature~\cite{lacker2015mean}, where one explicitly seeks the player distribution over the product space of the states and controls, instead of the state-space only.
	Although conceptually instructive, we are however ultimately interested in the practical solution of MFG, which requires formulations of the problem that are well-suited for numerical discretization.
	It is therefore strongly preferable to avoid the cost of discretizing functions over the higher dimensional state-control space, which might be computationally intractable in many cases.
	This motivates the search for other formulations of the MFG that keep the same dimensionality of the original PDE system of Lasry and Lions.}
There are a handful of works on MFG systems with nondifferentiable Hamiltonians in the PDE literature.
In \cite{mazanti2019minimal} Mazanti and Santambrogio introduced a model of \emph{ deterministic} minimal time MFG with congestion, {where the Hamiltonian takes the form} $H(x,p,m)=\mathcal{K}(x,m)|p|$ for $(x,p,m)\in \Omega\times \mathbb{R}^d\times \mathcal{P}(\Omega)$.
Using the structure of the underlying deterministic optimal control problem, they show that $\nabla u\neq 0$ on the support of $m$, {thus avoiding the point of nondifferentiability of $H$.}
Extensions of this work were then considered in \cite{dweik2020sharp,arjmand2021nonsmooth,arjmand2022multipopulation} for MFG of minimal-time type in different modelling contexts.
{The possible nonuniqueness of classical solutions for nondifferentiable Hamiltonians was studied in~\cite{bardi2019non}, where again} the gradient of the value function avoids the points of nondifferentiability of the Hamiltonian.
Recently, {Ducasse, Mazanti \& Santambrogio} {considered in~\cite{ducasse2020second} a second-order} minimal-time MFG system with a Hamiltonian taking the form $H(p,m)=\mathcal{K}(m)|p|$, where the {advective term in the continuity equation for $m$} is required to satisfy the optimality conditions {related to the Pontryagin maximum principle}, but is otherwise possibly nonunique.

{In order to handle general nondifferentiable Hamiltonians,} we proposed in~\cite{osborne2022analysis,osborne2023finite} {to generalize the MFG system of~Lasry and Lions} to a system where the KFP equation is relaxed to a \emph{partial differential inclusion} (PDI) where {the coefficient of the advective term is an element of the subdifferential of the Hamiltonian.}
In particular, we showed that this notion of solution {includes that of~\cite{ducasse2020second} as a special case,} at least in terms of the dependence of the Hamiltonian with respect to the gradient of the value function, {see~\cite[Remark~3.2]{osborne2022analysis}.}
{The existence and uniqueness of solutions of the resulting MFG PDI system, along with the convergence of their numerical approximations, was then shown under suitable hypotheses for steady-state problems in~\cite{osborne2022analysis} and for time-dependent problems in~\cite{osborne2023finite}.}

{
	\subsection{MFG partial differential inclusions and heuristic derivation}
	Let us now consider a heuristic derivation of a MFG PDI system, which will help to motivate it and to give some intuition as to how it describes the Nash equilibria in the case of possibly nonunique optimal controls.}
For instance, we can consider a system of the form
\begin{subequations}\label{intro-sys-time-mfg-pdi-eg}
	\begin{align}
		-\partial_tu- \nu\Delta u+{H}(t,x,\nabla u)&={F}[m](t,x) \text{ }&&\text{ in } (0,T)\times\Omega,\label{intro-sys-time-hjb}
		\\
		\partial_tm-\nu\Delta m &\in \text{div}\left(m\partial_p {H}(t,x,\nabla u)\right)\text{ }&&\text{ in }(0,T)\times\Omega,\label{intro-sys-time-kfp}
	\end{align}
\end{subequations}
which models a generic single-population MFG in a state space $\Omega \subset \R^d$, $d\in\N$, over a finite time horizon $(0,T)$.
{Here $u:(0,T)\times\Omega\tends \R$ denotes the value function of the underlying optimal control problem, and $m:(0,T)\times\Omega\tends \R$ denotes the density of the player distribution.}
{
	To fix ideas, we shall restrict our attention to the case where $\Omega$ is a bounded domain, in which case the system~\eqref{intro-sys-time-mfg-pdi-eg} is accompanied by suitable boundary conditions for $m$ and $u$ on $\partial\Omega$. Moreover the system is coupled with suitable initial and final time conditions for $m$ and $u$ respectively.
	We refer the reader to~\cite{osborne2023finite} for more details on boundary and initial/final time conditions.}
The coupling term $F[m]$ denotes the component of the running cost that is allowed to depend on $m$, possibly in some nonlocal manner.
{The Hamiltonian $H:(0,T)\times\Omega\times\mathbb{R}^d\ni (t,x,p)\mapsto H(t,x,p)$ is a real-valued function that is convex with respect to the gradient variable $p\in \R^d$, and the set-valued map $\partial_pH:(0,T)\times\Omega\times\mathbb{R}^d\rightrightarrows \mathbb{R}^d$ denotes the partial subdifferential of $H$ with respect to $p$, which is defined by
	\begin{equation}
		\partial_pH(t,x,p)\coloneqq\left\{\tilde{b}\in\mathbb{R}^d:H(t,x,q)\geq H(t,x,p)+\tilde{b}\cdot (q-p)\quad\forall q\in\mathbb{R}^d\right\}.
	\end{equation}
	Note that~\eqref{intro-sys-time-mfg-pdi-eg} coincides with a classical MFG PDE system whenever $H$ is differentiable with respect to $p$, since then the subdifferential $\partial_p H$ is a singleton set containing only the partial derivative of $H$ with respect to $p$.
}

{The system~\eqref{intro-sys-time-mfg-pdi-eg} can be derived on a heuristic level as follows.}
{We suppose that the players' dynamics are given by a controlled stochastic process of the form
	\begin{equation}\label{SDE}
		\mathrm{d}X_t=-b(t,X_t,\alpha_t)\mathrm{d}t+\sqrt{2\nu}\mathrm{d}B_t,
	\end{equation} 
	where $b:[0,T]\times\Omega\times\mathcal{A}\to\mathbb{R}^d$ denotes the negative controlled drift, with each player choosing a control $\alpha_t$ in the control set $\mathcal{A}$ at time $t\in(0,T)$.
	Note that we use here the convention of a minus sign in the drift term in~\eqref{SDE} because it helps to simplify the notation below; this explains why we refer to~$b$ as the negative controlled drift.
	We will assume in the following that $\mathcal{A}$ is a compact metric space and the data $b$ and $f$ are uniformly continuous with respect to their arguments.
	Also here $\nu>0$ is a constant and $B_t$ denotes the standard $d$-dimensional Brownian motion. 
	Also, we assume that the process $X_t$ is either absorbed or reflected at the boundary $\partial\Omega$, which leads to either Dirichlet or Neumann boundary conditions for the MFG system, although the details are not necessary for the immediate discussion.}
{If we suppose that the objective functional of the optimal control problem is in separable form, i.e.\ the running cost increment per unit time is of the form $f(t,X_t,\alpha_t)+F[m](t,X_t)$, then the Hamiltonian $H$ is defined by}
\begin{equation}\label{intro-Ham}
	H(t,x,p)\coloneqq \sup_{\alpha\in\mathcal{A}}\left\{b(t,x,\alpha)\cdot p - f(t,x,\alpha)\right\}\quad\forall (t,x,p)\in (0,T)\times \Omega\times\mathbb{R}^d,
\end{equation}
{Then, at a Nash equilibrium, the value function $u$ solves the HJB equation~\eqref{intro-sys-time-hjb}, and we suppose momentarily that $u$ is sufficiently regular.}
Under the above hypotheses on $\mathcal{A}$ and the data $b$ and $f$, the supremum in~\eqref{intro-Ham} is achieved.
{Then, the players at a given state $x\in\Omega$ and time $t\in (0,T)$ make a choice of feedback control from} the maximizing set $\Lambda(t,x,\nabla u(t,x))$, where the set-valued map $\Lambda$ is defined by
\begin{equation}\label{eq:maximizing_controls}
	\begin{aligned}
		\Lambda(t,x,p)\coloneqq \argmax_{\alpha\in \mathcal{A}}\left[ b(t,x,\alpha)\cdot p - f(t,x,\alpha) \right] &&& \forall (t,x,p)\in (0,T)\times \Omega\times\R^\dim.
	\end{aligned}
\end{equation}
As explained above, the maximizing set~\eqref{eq:maximizing_controls} is not always a singleton,  so it is possible for the players at $(t,x)$ to make distinct choices of controls from each other.
{To model this, we suppose that the players' positions and choices of control can be described} by a flow of measures $\{M_t\}_{t\in [0,T]}$ on $\Omega\times\mathcal{A}$ that satisfies the KFP equation
\begin{equation}\label{weak-kfp-occ-m-eqn}
	\int_0^T\int_{\Omega\times\mathcal{A}}\big(-\partial_t\phi(t,x)-\nu\Delta\phi(t,x)+b(t,x,\alpha)\cdot\nabla \phi(t,x)\big)M_t(\mathrm{d}x,\mathrm{d}\alpha)\mathrm{d}t=0,
\end{equation}
{for all smooth compactly-supported test functions $\phi$ on $(0,T)\times \Omega$.}
In order to connect the flow of measures $\{M_t\}_{t\in (0,T)}$ with {the players' choices of controls}, we consider the formal disintegration of measures
\begin{equation}\label{rho-disint-eqn}
	M_t(\mathrm{d}x,\mathrm{d}\alpha) = \rho_{t,x}(\mathrm{d}\alpha)m_t(\mathrm{d}x) \quad {\forall t\in[0,T]},
\end{equation}
where $m_t(\mathrm{d}x) = \int_{\mathcal{A}}M_t(\mathrm{d}x,\mathrm{d}\alpha)$ is the measure on $\Omega$ obtained by projection of $M_t$ on $\Omega$, and where $\rho_{t,x}\in \mathcal{P}(\mathcal{A})$ is the probability measure on $\mathcal{A}$ {representing the choices of controls for the players at $(t,x)$.}
{At a Nash equilibrium, the} players at $(t,x)$ choose controls from the feedback control set $\Lambda(t,x,\nabla u(t,x))$, and thus {we require that}
\begin{equation}\label{mfg-pdi-hypo}
	\begin{aligned}
		\text{supp }\rho_{t,x}\subset \Lambda(t,x,\nabla u(t,x)) &&&\forall (t,x)\in (0,T)\times\Omega,
	\end{aligned}
\end{equation}
where $\supp\rho_{t,x}$ denotes the support of the measure $\rho_{t,x}$.

{
	We now show how to obtain the differential inclusion~\eqref{intro-sys-time-kfp}. If the flow of measures $\{m_t\}_{t}$ is sufficiently regular to admit a sufficiently smooth density, i.e.\ if~$m_t(\mathrm{d}x)=m(t,x)\mathrm{d}x$ with $\mathrm{d}x$ denoting the Lebesgue measure, then by using~\eqref{rho-disint-eqn}, Fubini's theorem, and the fact that $\rho_{t,x}$ is a probability measure on $\mathcal{A}$, we obtain from~\eqref{weak-kfp-occ-m-eqn} the KFP equation for the density $m$ of the form
	\begin{align}
		\int_0^T\int_{\Omega}\big(-\partial_t\phi-\nu\Delta\phi+\tilde{b}_*\cdot\nabla \phi\big)m \,\mathrm{d}x\mathrm{d}t= 0 &&&\forall \phi \in C^\infty_0((0,T)\times \Omega)\label{weak-kfp-occ-m-eqn-alt},
		\\
		\tilde{b}_*(t,x)\coloneqq \int_{\mathcal{A}}b(t,x,\alpha)\rho_{t,x}(\mathrm{d}\alpha) &&& \forall (t,x)\in (0,T)\times\Omega.
	\end{align}
	In other words, the density $m$ satisfies a KFP equation with a (negative) drift term $\tilde{b}_*$ which represents a weighted average of the (negative) drifts that result from the players' choices.}
Recalling the hypothesis that $\mathcal{A}$ is a compact metric space and the data $b$ and $f$ are continuous, the optimality condition~\eqref{mfg-pdi-hypo} and the fact that $\rho_{t,x}\in \mathcal{P}(\mathcal{A})$ then imply that
\begin{equation}\label{b-vec-inclusion-1}
	\begin{aligned}
		\tilde{b}_*(t,x)\in \text{conv}\{b(t,x,\alpha):\alpha\in \Lambda(t,x,\nabla u(t,x))\} &&&\forall (t,x)\in (0,T)\times\Omega.
	\end{aligned}
\end{equation}
It is known from Convex Analysis that the convex hull of the set of optimal (negative) drifts is precisely the subdifferential of the Hamiltonian with respect to $p$:
\begin{equation}\label{intro-convv-formula}
	\conv\{b(t,x,\alpha):\alpha\in \Lambda(t,x,p)\} = \partial_pH(t,x,p) \quad\forall (t,x,p)\in (0,T)\times\Omega\times \R^\dim,
\end{equation}
see for instance~\cite[Lemma~3.4.1]{osborne2024thesis} for an elementary proof of~\eqref{intro-convv-formula} involving only Ca\-ra\-theo\-do\-ry's theorem and the hyperplane separation theorem.
Therefore~\eqref{b-vec-inclusion-1} is equivalent to $\tilde{b}_*(t,x)\in \partial_p H(t,x,\nabla u(t,x))$ for all $(t,x)$.
If $m$ is regular enough to recast the KFP equation~\eqref{weak-kfp-occ-m-eqn-alt} into strong form, we then obtain {the differential inclusion~\eqref{intro-sys-time-kfp} as initially claimed.
	Although the arguments in the paragraphs above are only heuristic, they nonetheless help to give some insight into the meaning of the MFG PDI, and its connection to the structure of the underlying Nash equilibria.
}

\subsection{Contributions of this work}

{In addition to the heuristic derivation above, we show in this work how the PDI system generalizes the original PDE system to the case of nondifferentiable Hamiltonians.
	More precisely, we show that the MFG PDI system can also be understood as the limit of sequences of the MFG PDE systems with regularized Hamiltonians.
	The idea of approximating the MFG PDI by regularized PDE systems is also motivated by numerical computations, since it paves the way to approximating the solutions of the resulting discretized PDI in~\cite{osborne2022analysis,osborne2023finite} by existing solvers for nonlinear equations.}
To be more concrete, let us consider for simplicity {as a model problem} the stationary MFG PDI system
\begin{equation}\label{mfg-pdi-sys}
	\begin{aligned}
		- \nu\Delta u+H(x,\nabla u)&=F[m] \text{ }&&\text{ in }\Omega,
		\\
		-\nu\Delta m - G(x)&\in \text{div}\left(m\partial_p H(x,\nabla u)\right)\text{ }&&\text{ in }\Omega,
		\\
		u=0\quad\text{and}\quad m&=0,\quad &&\text{on }\partial\Omega,
	\end{aligned}
\end{equation}
{where $\nu>0$ is a constant, where the Hamiltonian $H$ is convex and Lipschitz continuous w.r.t.\ $p$, but not necessarily differentiable. We specify the assumptions on the coupling term $F$ and the source term $G$ in~Section~\ref{sec-notation} below.}
Note that the case of Dirichlet boundary conditions in~\eqref{mfg-pdi-sys} corresponds to models of infinite horizon control, where the players exit the game when they reach the boundary $\Omega$. We assume that the Dirichlet conditions on $u$ and $m$ are homogeneous for simplicity, yet note that the inhomogeneous case is usually reduced to the homogeneous one by suitable transformation of the problem. The source/sink term $G$ represents the source/sink of new players into the game.

We shall study the approximation of~\eqref{mfg-pdi-sys} by regularized problems of the form
\begin{equation}\label{eq:regularized_MFG}
	\begin{aligned}
		- \nu\Delta {u}_{\lambda}+{H}_{\lambda}(x,\nabla {u}_{\lambda})&={F}[{m}_{\lambda}] \text{ }&&\text{ in }\Omega,
		\\
		-\nu\Delta {m}_{\lambda} - \text{div}\left({m}_{\lambda}\frac{\partial {H}_{\lambda}}{\partial p}(x,\nabla {u}_{\lambda})\right) &= {G}(x)\text{ }&&\text{ in }\Omega,
		\\
		u_{\lambda}=0 \text{ }\text{ and }\text{ }m_{\lambda}&=0 &&\text{ on }\partial \Omega,
	\end{aligned}
\end{equation} 
where $\{H_{\lambda}\}_{\lambda\in(0,1]}$ is a family of {regularized Hamiltonians that are convex and $C^1$ w.r.t $p$ and} that converges uniformly to $H$ as the regularization parameter $\lambda \tends 0$.
{Precise assumptions on the regularizations are given in Section~\ref{sec:regularization} below, which include many well-known choices such as Moreau--Yosida regularization {and mollification}, c.f.~\eqref{H-moreau-yosida-reg-defn} and~\eqref{H-mollifcation} below.}
{Under some mild conditions on the data $F$ and $G$, the existence of weak solutions in $H^1_0$ of~\eqref{mfg-pdi-sys} and of~\eqref{eq:regularized_MFG} is already known, c.f.~\cite{osborne2022analysis,osborne2024thesis}, with the uniqueness result for strictly monotone couplings from Lasry and Lions~\cite{lasry2007mean} extending also to the PDI case.}
{Note that we choose to focus our analysis on the setting of weak solutions in $H^1_0$, since it is the most relevant for numerical methods, c.f.~\cite{osborne2022analysis,osborne2024near}. 
	One could also consider other analytical settings, however note that solutions of the MFG PDI~\eqref{mfg-pdi-sys} can have rather limited regularity in general as a result of the nondifferentiability of $H$, see~\cite[Section~3.3]{osborne2022analysis}.}

Our first main result, in Theorem~\ref{theorem-convergence-moreau-yosida} below, shows that solutions of the regularized problems~\eqref{eq:regularized_MFG} converge, up to subsequences and in suitable norms, to a weak solution of the PDI system~\eqref{mfg-pdi-sys}. {This makes precise the sense in which MFG PDI generalize MFG PDE when relaxing the differentiability assumption on the Hamiltonian, and one can also} consider this result as an alternative proof of the existence of weak solutions to MFG PDI.
We further demonstrate the sharpness of the conclusions in Theorem~\ref{theorem-convergence-moreau-yosida} through various examples in Section~\ref{sec-examples}.

Our second main result, in Theorem~\ref{theorem-rate-of-convergence-moreau-yosida} below, shows that if stronger quantitative assumptions are placed on the data, including a strong monotonicity condition on $F$,  {thus ensuring that the MFG PDI has a unique solution}, then we can prove a rate of convergence for the differences $u-u_{\lambda}$ and $m-m_{\lambda}$ in terms of error between $H$ and $H_{\lambda}$ of the form
\begin{align}\label{rates-of-convergence}
	\norm{m-m_{\lambda}}_{\mathcal{X}} + \norm{u-u_{\lambda}}_{H^1(\Omega)} \leq C \omega(\lambda)^{\frac{1}{2}},
\end{align}
for all $\lambda$ sufficiently small, where the constant~$C$ is independent of $\lambda$, where $\omega(\lambda)$ is a maximum-norm {upper} bound for the difference in the Hamiltonians, c.f.~\eqref{H-bounds:reg-unif-approx_1} below, and where $\mathcal{X}$ is the space on which $F$ is defined.
In particular, the bound~\eqref{rates-of-convergence} does not assume any higher regularity assumption on the solution~$(u,m)$ of~\eqref{mfg-pdi-sys}.
The bound~\eqref{rates-of-convergence} is particularly relevant for numerical computations since one must balance the errors that stem from different sources, including discretization, regularization, iteration, etc. {The application of~\eqref{rates-of-convergence} to numerical methods will be the subject of future work.}

This paper is organized as follows.
In Section~\ref{sec-notation} we establish notation and introduce hypotheses on the model data. 
{Section~\ref{sec-weakform-def-and-reg} introduces the MFG PDI and its regularization.}
{We state the main results in Section~\ref{sec-main-results}.}
Preliminary results are established in Section~\ref{sec-analysis-prelim-results}, which we use in Section~\ref{sec-pfs-of-main-result-1} to prove Theorem~\ref{theorem-convergence-moreau-yosida} and Corollary~\ref{corollary-strong-H1-conv-density-1}. 
We prove Theorem~\ref{theorem-rate-of-convergence-moreau-yosida} on convergence rates in Section~\ref{sec-pfs-of-main-result-2}. 
Examples that {demonstrate the sharpness of our conclusions are given} in Section~\ref{sec-examples}.

\section{Setting and Notation}\label{sec-notation}
We denote $\mathbb{N}\coloneqq \{1,2,3,\cdots\}$.
For a Lebesgue measurable set $A \subset \mathbb{R}^d$, $d\in\mathbb{N}$, let $\lVert \cdot \rVert_{L^2(A)}$ denote the standard $L^2$-norm for scalar- and vector-valued functions on $A$. 
{For $d\in\mathbb{N}$}, the $d$-dimensional open ball of radius $r$ and centre $x_0\in\mathbb{R}^d$ is denoted by $B_r(x_0)$.
Let $\Omega$ be a bounded, open connected subset of $\mathbb{R}^d$ with Lipschitz boundary $\partial \Omega$. Let the diffusion $\nu>0$ be constant, and let $G\in H^{-1}(\Omega)$ be given. Let $(\mathcal{X},\|\cdot\|_{\mathcal{X}})$ be a real Banach space such that $H_0^1(\Omega)$ is embedded continuously and compactly in $\mathcal{X}$.
We suppose that $F:\mathcal{X}\to H^{-1}(\Omega)$ is a continuous operator that satisfies
\begin{equation}\label{F-linear-growth}
	\|F[w]\|_{H^{-1}(\Omega)}\leq C_F\left(\|w\|_{\mathcal{X}}+1\right)\quad\forall w\in \mathcal{X},
\end{equation}
where $C_F\geq 0$ is a constant.
We introduce the following set of hypotheses on the source term $G$ and coupling operator $F$ that will be occasionally used in the subsequent analysis. 
\begin{enumerate}[label={(H\arabic*)}]
	\item\label{H:G-postive}
	$G\in H^{-1}(\Omega)$ is nonnegative in the sense of distributions, i.e.\
	$\langle G, \phi\rangle_{H^{-1}\times H_0^1} \geq 0$ for all functions $\phi\in H_0^1(\Omega)$ that are nonnegative a.e.\ in $\Omega$.
	
	\item\label{H:F-strict-mono}
	$F$ is {strictly monotone} on $H_0^1(\Omega)$, which is to say 
	\begin{equation}\label{eq:F_monotone}
		\langle F[m_1]-F[m_2],m_1-m_2\rangle_{H^{-1}\times H_0^1} \leq 0\Longrightarrow m_1=m_2,
	\end{equation} 
	whenever $m_1,\,m_2\in H_0^1(\Omega)$.
	
	\item \label{H:F-strong-mono-2}
	$F$ is strongly monotone on $H^1_0(\Omega)$ w.r.t {to the norm $\norm{\cdot}_{\mathcal{X}}$}, in the sense that there exists a constant $c_{F}>0$ such that 
	\begin{equation}\label{strong-mono-F}
		\langle {F}[m_1]-{F}[m_2], m_1-m_2\rangle_{H^{-1}\times H_0^1}\geq c_{F}\|m_1-m_2\|_{\mathcal{X}}^{2},
	\end{equation}for all $m_1,\,m_2\in H_0^1(\Omega)$.
	
	\item \label{H:F-Lipschitz-continuous}
	$F$ is Lipschitz continuous: 
	\begin{equation}
		\|F[m_1]-F[m_2]\|_{H^{-1}(\Omega)}\leq L_F\|m_1-m_2\|_{\mathcal{X}}\quad\forall m_1,m_2\in  \mathcal{X}.
	\end{equation}
	for some constant $L_F\geq 0$.
\end{enumerate}
{Note that in the subsequent results, we do not always require all of the hypotheses~\ref{H:G-postive}, \ref{H:F-strict-mono}, \ref{H:F-strong-mono-2}, and~\ref{H:F-Lipschitz-continuous}; therefore we will specify which hypotheses are needed in each case.}
{We also stress that in hypotheses~\ref{H:F-strict-mono} and~\ref{H:F-strong-mono-2}, the monotonicity condition is only required for arguments of $F$ in the smaller space $H^1_0(\Omega)$, and not the whole space $\mathcal{X}$, so the duality pairings in~\eqref{eq:F_monotone} and~\eqref{strong-mono-F} are well-defined.
	We refer the reader to~\cite[Section~2]{osborne2022analysis} for} some concrete examples of coupling operators that satisfy these conditions {in the case $\mathcal{X}=L^2(\Omega)$.}

{Motivated by the underlying optimal control problem, we suppose that the Hamiltonian~$H$ appearing in~\eqref{mfg-pdi-sys} is defined by
	\begin{equation}\label{Hamiltonian}
		H(x,p)\coloneqq \sup_{\alpha\in\mathcal{A}}\left\{b(x,\alpha)\cdot p-f(x,\alpha)\right\},\quad \forall (x,p)\in\overline{\Omega}\times\mathbb{R}^d,
	\end{equation}
	where} $b$ and $f$ are uniformly continuous on $\overline{\Omega}\times \mathcal{A}$ with $\mathcal{A}$ a compact metric space.
It follows that the Hamiltonian $H$ is Lipschitz continuous and satisfies 
\begin{equation}\label{bounds:lipschitz}
	|H(x,p)-H(x,q)|\leq L_H|p-q|\quad\forall(x,p,q)\in\overline{\Omega}\times\mathbb{R}^d\times\mathbb{R}^d,
\end{equation}
with {some constant $L_H$}, {for instance one can take $L_H\coloneqq\|b\|_{C(\overline{\Omega}\times\mathcal{A};\mathbb{R}^d)}$.}
We deduce from~\eqref{bounds:lipschitz} that there exists a constant $C_H\geq 0$ such that
\begin{equation}
	|H(x,p)|\leq C_H(|p|+1)\quad\forall (x,p)\in \overline{\Omega}\times\mathbb{R}^d.
\end{equation}
It is clear from~\eqref{bounds:lipschitz} that the mapping $v\mapsto H(\cdot,\nabla v)$ is Lipschitz continuous from $H^1(\Omega)$ into $L^2(\Omega)$. We will often abbreviate this composition by writing instead $H[\nabla v]\coloneqq H(\cdot,\nabla v)$ a.e.\ in $\Omega$.

Given arbitrary sets $A$ and $B$, an operator $\mathcal{M}$ that maps each point $x\in A$ to a \emph{subset} of $B$ is called a \emph{set-valued map from $A$ to $B$}, and we write $\mathcal{M}:A\rightrightarrows B$. For the Hamiltonian given by~\eqref{Hamiltonian} its point-wise  subdifferential with respect to $p$ is the set-valued map $ \partial_p H\colon\Omega\times\mathbb{R}^d\rightrightarrows\mathbb{R}^d$ defined by 
\begin{equation}\label{subdifferential}
	\partial_p H(x,p)\coloneqq\left\{\tilde{b}\in\mathbb{R}^d:H(x,q)\geq H(x,p)+\tilde{b}\cdot(q-p),\quad\forall q\in\mathbb{R}^d\right\}.
\end{equation}
Note that $\partial_p H(x,p)$ is nonempty for all $x\in\Omega$ and $p\in\mathbb{R}^d$ because $H$ is real-valued and convex in $p$ for each fixed $x\in\Omega$. 
Note also that the subdifferential $\partial_p H$ is uniformly bounded since~\eqref{bounds:lipschitz} implies that for all $(x,p)\in\Omega\times \mathbb{R}^d$, the set $\partial_p H(x,p)$ is contained in the closed ball of radius $L_H=\lVert b \rVert_{C(\overline{\Omega}\times\mathcal{A};\mathbb{R}^d)}$ centred at the origin.

{We now define a set-valued mapping for measurable selections of the subdifferential composed with the gradients of weakly differentiable functions.}
Given a function $v\in W^{1,1}(\Omega)$, we say that a real-valued vector field $\tilde{b}:\Omega\to\mathbb{R}^d$ is a \emph{measurable selection of $\partial_pH(\cdot,\nabla v)$} if $\tilde{b}$ is Lebesgue measurable and $\tilde{b}(x)\in \partial_p H(x,\nabla v(x))$ for a.e.\ $x\in \Omega.$
The uniform boundedness of the subdifferential sets implies that any measurable selection $\tilde{b}$ of $\partial_p H(\cdot,\nabla v)$ must belong to $L^\infty(\Omega;\mathbb{R}^d)$. 
Therefore, given $v\in W^{1,1}(\Omega)$ we can consider the set of all measurable selections of $\partial_p H(\cdot,\nabla v)$, which gives rise to a set-valued mapping from $W^{1,1}(\Omega)$ to subsets of $L^\infty(\Omega;\mathbb{R}^d)$.
\begin{definition}[\cite{osborne2022analysis}]\label{Def1}
	Let $H$ be the function given by~\eqref{Hamiltonian}. We define the set-valued map $D_pH\colon W^{1,1}(\Omega)\rightrightarrows L^{\infty}(\Omega;\mathbb{R}^d)$ by
	$$D_pH[v]\coloneqq \left\{\tilde{b}\in L^{\infty}(\Omega;\mathbb{R}^d):\tilde{b}(x)\in\partial_pH(x,\nabla v(x)) \text{ \emph{for a.e.\ }}x\in\Omega\right\}.$$
\end{definition}
In \cite[Lemma 4.3]{osborne2022analysis} it was shown that $D_pH[v]$ is a nonempty subset of $L^{\infty}(\Omega;\mathbb{R}^d)$ for each $v$ in $ W^{1,1}(\Omega)$. 

\paragraph{Notation for inequalities}
{In order to avoid the proliferation of generic constants,} in the following, we write $a\lesssim b$ for real numbers $a$ and $b$ if $a\leq C b$ for some constant $C$ that may depend on the problem data appearing below, but is otherwise independent of the regularization parameter $\lambda$ {appearing in~\eqref{eq:regularized_MFG}.}
Throughout this work we will regularly specify the particular dependencies of the hidden constants.

\section{Weak formulation of MFG PDI and of its regularizations}\label{sec-weakform-def-and-reg}

\subsection{Weak formulation of the MFG PDI}
{We recall the definition of a weak solution of the MFG PDI~\eqref{mfg-pdi-sys} that was introduced in~\cite{osborne2022analysis}.}
\begin{definition}[Weak Solution of MFG PDI~\eqref{mfg-pdi-sys}]\label{weakdef}
	A pair $(u,m)\in H_0^1(\Omega)\times H_0^1(\Omega)$ is a weak solution of~\eqref{mfg-pdi-sys} if there exists a $\tilde{b}_*\in D_pH[u]$ such that
	\begin{subequations}\label{weakform}
		\begin{align}
			\int_{\Omega}\nu\nabla u\cdot\nabla \psi+H[\nabla u]\psi \mathrm{d}x &=\langle F[m],\psi\rangle_{H^{-1}\times H_0^1} &&\forall \psi\in H^1_0(\Omega), \label{weakform1}\\ 
			\int_{\Omega}\nu\nabla m\cdot\nabla \phi+m\tilde{b}_*\cdot\nabla \phi \mathrm{d}x &=\langle G,\phi\rangle_{H^{-1}\times H_0^1} &&\forall \phi\in H^1_0(\Omega).\label{weakform2} 
		\end{align} 
	\end{subequations}
\end{definition}
\begin{remark}
	The above definition of weak solution only requires the existence of a suitable transport vector field $\tilde{b}_*\in D_pH[u]$, but its uniqueness is not required in general.
	{We refer the reader to~\cite[Section~3.3]{osborne2024thesis} for several examples showing that $\tilde{b}_*$ might be unique in some cases, but not in others.}
\end{remark}

Under the above hypotheses on the data, we have the following result on the existence and uniqueness of solutions.
\begin{theorem}[Existence and uniqueness of weak solutions~\cite{osborne2022analysis}]\label{theorem-existence-uniqueness-mfg-pdi}
	Let $\nu>0$ be constant, and let $G\in H^{-1}(\Omega)$. Let $H$ be the function given by~\eqref{Hamiltonian}, and let $F:\mathcal{X}\to H^{-1}(\Omega)$ be a continuous operator {satisfying~\eqref{F-linear-growth}.
		Then,} there exists a weak solution $(u,m)\in H_0^1(\Omega)\times H_0^1(\Omega)$ of~\eqref{mfg-pdi-sys} in the sense of Definition~\ref{weakdef}.
	{In addition, if} $G$ satisfies~\ref{H:G-postive} and if $F$ satisfies~\ref{H:F-strict-mono} then there is at most one weak solution of~\eqref{mfg-pdi-sys} in the sense of Definition~\ref{weakdef}.
\end{theorem}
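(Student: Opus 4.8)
The plan is to prove existence by a set-valued fixed-point argument that first decouples the two relations in~\eqref{weakform}, and to prove uniqueness by the Lasry--Lions monotonicity argument adapted to the subdifferential formulation. The key preliminary observation is that the set-valuedness of~\eqref{weakform} reduces entirely to the selection $\tilde b_*\in D_pH[u]$. Indeed, for fixed $g\in H^{-1}(\Omega)$ the equation $-\nu\Delta u+H[\nabla u]=g$ has a \emph{unique} weak solution $u=u[g]\in H_0^1(\Omega)$: the difference $w$ of two solutions satisfies $-\nu\Delta w+c\cdot\nabla w=0$ for some $c\in L^\infty(\Omega;\R^\dim)$ with $\lVert c\rVert_{L^\infty}\le L_H$ (such $c$ exists because $H(x,\cdot)$ is Lipschitz), so the weak maximum principle---valid for this zeroth-order-free operator for every $\nu>0$---gives $w=0$. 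Likewise, for each $\tilde b\in L^\infty(\Omega;\R^\dim)$ the equation $-\nu\Delta m-\mathrm{div}(m\tilde b)=G$ has a unique weak solution $m=m[\tilde b]\in H_0^1(\Omega)$, by the Fredholm alternative together with the maximum principle for the adjoint operator $\phi\mapsto-\nu\Delta\phi+\tilde b\cdot\nabla\phi$, with the bound $\lVert m[\tilde b]\rVert_{H_0^1(\Omega)}\lesssim\lVert G\rVert_{H^{-1}(\Omega)}$ uniform over $\lVert\tilde b\rVert_{L^\infty}\le L_H$, and with the companion bound $\lVert u[F[m]]\rVert_{H_0^1(\Omega)}\lesssim\lVert F[m]\rVert_{H^{-1}(\Omega)}+1\lesssim\lVert m\rVert_{\mathcal{X}}+1$ via~\eqref{F-linear-growth}. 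Recall finally that $D_pH[u]$ is nonempty by~\cite[Lemma~4.3]{osborne2022analysis} and, being the set of a.e.\ selections of the convex-valued map $x\mapsto\partial_pH(x,\nabla u(x))$, it is convex and contained in the closed ball of radius $L_H$ in $L^\infty(\Omega;\R^\dim)$.

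To produce a solution I would apply the Kakutani--Fan--Glicksberg fixed-point theorem to the set-valued map $\Psi(\tilde b)\coloneqq D_pH\big[u[F[m[\tilde b]]]\big]$ on the closed ball $\mathcal{B}\coloneqq\{\tilde b\in L^\infty(\Omega;\R^\dim):\lVert\tilde b\rVert_{L^\infty}\le L_H\}$ equipped with the weak-$*$ topology, which makes $\mathcal{B}$ convex and compact. Since $\partial_pH$ takes values in the ball of radius $L_H$, $\Psi$ maps $\mathcal{B}$ into its own nonempty convex subsets, so the substantial point is that $\Psi$ has closed graph. Suppose $\tilde b_n\rightharpoonup^*\tilde b$ in $\mathcal{B}$ and $\tilde c_n\in\Psi(\tilde b_n)$ with $\tilde c_n\rightharpoonup^*\tilde c$. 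The uniform bounds above give $m_n\coloneqq m[\tilde b_n]\rightharpoonup m$ in $H_0^1(\Omega)$, hence $m_n\to m$ strongly in $\mathcal{X}$ and in $L^2(\Omega)$; then $\int_\Omega m_n\tilde b_n\cdot\nabla\phi\,\mathrm{d}x\to\int_\Omega m\tilde b\cdot\nabla\phi\,\mathrm{d}x$ for each $\phi\in H_0^1(\Omega)$ (product of a strongly convergent $L^2$-sequence and a weak-$*$ convergent $L^\infty$-sequence), so passing to the limit in the Kolmogorov--Fokker--Planck relation yields $m=m[\tilde b]$, while $F[m_n]\to F[m]$ in $H^{-1}(\Omega)$ by continuity of $F$. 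Writing $u_n\coloneqq u[F[m_n]]\rightharpoonup u$ in $H_0^1(\Omega)$ and testing the HJB relation for $u_n$ against $u_n-u$, the lower-order term drops out ($H[\nabla u_n]$ is bounded in $L^2(\Omega)$ and $u_n-u\to0$ strongly in $L^2(\Omega)$) and one deduces $\nabla u_n\to\nabla u$ strongly in $L^2(\Omega)$, whence $H[\nabla u_n]\to H[\nabla u]$ in $L^2(\Omega)$ and, passing to the limit, $u=u[F[m]]$. Finally, the inclusions $\tilde c_n(x)\in\partial_pH(x,\nabla u_n(x))$, written in the variational form~\eqref{subdifferential}, pass to the limit using the strong convergence of $\nabla u_n$, the weak-$*$ convergence of $\tilde c_n$, and the continuity of $H(x,\cdot)$, giving $\tilde c(x)\in\partial_pH(x,\nabla u(x))$ a.e., i.e.\ $\tilde c\in\Psi(\tilde b)$. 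A fixed point $\tilde b_*\in\Psi(\tilde b_*)$ then furnishes, with $m\coloneqq m[\tilde b_*]$ and $u\coloneqq u[F[m]]$, a weak solution $(u,m)$ in the sense of Definition~\ref{weakdef}.

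For uniqueness, let $(u_1,m_1)$ and $(u_2,m_2)$ be two weak solutions with associated vector fields $\tilde b_1,\tilde b_2$. Under~\ref{H:G-postive}, pairing~\eqref{weakform2} with nonnegative test functions obtained by solving the adjoint equation with nonnegative data and invoking the maximum principle for the adjoint operator yields $m_1,m_2\ge0$ a.e.\ in $\Omega$. Testing~\eqref{weakform1} for $u_1$ minus that for $u_2$ against $m_1-m_2$, testing~\eqref{weakform2} for $m_1$ minus that for $m_2$ against $u_1-u_2$, and subtracting---so that the $\nu$-terms cancel---one obtains
\begin{align*}
	\int_\Omega(H[\nabla u_1]-H[\nabla u_2])(m_1-m_2)\,\mathrm{d}x&-\int_\Omega(m_1\tilde b_1-m_2\tilde b_2)\cdot\nabla(u_1-u_2)\,\mathrm{d}x\\
	&=\langle F[m_1]-F[m_2],\,m_1-m_2\rangle_{H^{-1}\times H_0^1}.
\end{align*}
Writing the left-hand side as $-\int_\Omega m_1[\cdots]\,\mathrm{d}x-\int_\Omega m_2[\cdots]\,\mathrm{d}x$, where each bracket is of the form $H[\nabla u_j]-H[\nabla u_i]-\tilde b_i\cdot\nabla(u_j-u_i)$ and is therefore nonnegative by the subdifferential inequality~\eqref{subdifferential} (using $\tilde b_i(x)\in\partial_pH(x,\nabla u_i(x))$), and using $m_1,m_2\ge0$, we see that the left-hand side is $\le0$; hence $\langle F[m_1]-F[m_2],m_1-m_2\rangle\le0$, and~\ref{H:F-strict-mono} forces $m_1=m_2$. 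Then $F[m_1]=F[m_2]$, so $u_1$ and $u_2$ solve the same HJB relation~\eqref{weakform1}, and the uniqueness of its weak solution established above gives $u_1=u_2$.

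I expect the closed-graph step of the fixed-point argument to be the main technical obstacle, since it hinges on upgrading the weak convergence of the gradients to strong $L^2$-convergence in order to pass to the limit in the nonlinear Hamiltonian and in the subdifferential inclusion. A related subtlety, glossed over above, is the a priori bound $\lVert u[g]\rVert_{H_0^1(\Omega)}\lesssim\lVert g\rVert_{H^{-1}(\Omega)}+1$: a direct energy estimate (testing with $u$) controls $\nu\lVert\nabla u\rVert_{L^2(\Omega)}^2$ only up to a term of size $\lVert\nabla u\rVert_{L^2(\Omega)}^2$ that cannot be absorbed when $\nu$ is small, so one argues by contradiction, rescaling a putatively unbounded sequence of solutions and passing to the limit with the help of the recession function $p\mapsto\sup_{\alpha\in\mathcal{A}}b(x,\alpha)\cdot p$ of $H(x,\cdot)$, whose positive homogeneity lets the limiting equation again be written with an $L^\infty$ drift and no zeroth-order term, so that the maximum principle forces the rescaled limit to vanish---a contradiction. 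The same reasoning delivers all of these bounds uniformly in the regularization parameter $\lambda$, which is what makes them usable in the convergence analysis of Section~\ref{sec-main-results} and beyond.
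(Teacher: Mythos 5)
Your proposal is correct and follows essentially the same route as the paper: Kakutani's fixed-point theorem applied to $\tilde b\mapsto D_pH\big[U[M[\tilde b]]\big]$ on the weak-$*$ compact ball of radius $L_H$ in $L^\infty(\Omega;\R^\dim)$, with the closed-graph step resting on compact embeddings, strong $H^1$-convergence of the HJB solutions, and weak-$*$ stability of the subdifferential inclusion, and with uniqueness via the Lasry--Lions cross-testing argument using $m_i\ge 0$ and the subdifferential inequality. The only cosmetic difference is that you re-derive inline what the paper delegates to Lemmas~\ref{inclusion}, \ref{lemma-uniform-L-inv-bound} and~\ref{lemma-HJB-continuous-moreau-yosida}, and your closing worry about the a priori bound is resolved more simply than you suggest by writing $H[\nabla u]-H[\cdot,0]=c\cdot\nabla u$ with $\lVert c\rVert_{L^\infty}\le L_H$ and invoking the uniform invertibility of operators in $\mathcal{G}(L_H)$.
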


The proof of {Theorem~\ref{theorem-existence-uniqueness-mfg-pdi} follows the same argument as in~\cite[Theorem~3.2.1]{osborne2024thesis} {(see also \cite{osborne2022analysis,osborne2024erratum})}, and is based on an application of Kakutani's fixed point theorem. For completeness, we include the proof of Theorem~\ref{theorem-existence-uniqueness-mfg-pdi} in Section~\ref{sec:proof_existence}.}

{
	\begin{example}[Nonuniqueness of solutions for nonmonotone $F$]\label{ex:nonuniqueness}
		In Theorem~\ref{theorem-existence-uniqueness-mfg-pdi}, uniqueness of solutions is shown under the additional conditions~\ref{H:G-postive} and~\ref{H:F-strict-mono}, which respectively require that $G$ is nonnegative and that $F$ is strictly monotone. 
		If these assumptions are relaxed, then in general uniqueness of solutions may fail, even for differentiable Hamiltonians; see for instance~\cite{bardi2019non} for some examples of nonuniqueness of solutions for some time-dependent MFG systems. 
		Here we give a short original example of nonuniqueness of solutions for the steady-state MFG system with homogeneous Dirichlet boundary conditions, when $F$ is not required to satisfy~\ref{H:F-strict-mono}, for a Hamiltonian $H$ that is $C^1$ with respect to $p$.
		For this example, suppose that $\Omega=(-1,1)\subset\mathbb{R}$, and let the Hamiltonian $H\colon \Omega\times \R\rightarrow \R$ be any function satisfying the condition that $H(x,p)=\frac{1}{2} p^2$ for all $p\in [-1,1]$ and all $x\in\Omega$; in particular we can take $H$ to be convex, globally Lipschitz, and $C^1$ with respect to $p$.
		Let the pairs of functions $(u_i,m_i)$, $i\in\{1,2\}$, be defined by
		\begin{subequations}
			\begin{align} 
				&u_{1}(x)\coloneqq \frac{1-x^2}{2}, \qquad m_{1}(x)\coloneqq 1 - e^{\frac{x^2-1}{2}}\quad\forall x\in [-1,1] \\
				&u_{2}(x)\coloneqq \frac{x^2-1}{2},\qquad m_{2}(x)\coloneqq e^{\frac{1-x^2}{2}}-1\quad\forall x\in [-1,1].
			\end{align} 
		\end{subequations}
		Note then that the $u_i$ and $m_i$, $i\in\{1,2\}$, are smooth functions in $\Omega$ satisfying homogeneous Dirichlet conditions on $\partial\Omega$.
		Note also that $|u_i^\prime (x)|=|x|\leq 1$ for all $x\in \Omega$, for each $i\in \{1,2\}$, so that $H(x, u_i^\prime(x))=\frac{|x|^2}{2}$ and $\frac{\partial H}{\partial p}(x,u_i^\prime(x))=u_i^\prime(x)$ for all $x\in\Omega$, for $i\in\{1,2\}$.
		Then, let $G\equiv 1$ in $\Omega$, and note that $G$ satisfies~\ref{H:G-postive}. Define $F$ by
		\begin{equation}\label{F-non-mono-example}
			F[m](x) \coloneqq \left(\frac{x^2}{2}+1\right)\frac{\norm{m-m_2}_{L^2(\Omega)}}{\norm{m_1-m_2}_{L^2(\Omega)}}+\left(\frac{x^2}{2}-1\right)\frac{\norm{m-m_1}_{L^2(\Omega)}}{\norm{m_2-m_1}_{L^2(\Omega)}}.
		\end{equation}
		Observe that $F$ is Lipschitz continuous from $\mathcal{X}=L^2(\Omega)$ to $H^{-1}(\Omega)$ and thus satisfies~\eqref{F-linear-growth}.
		However, it is clear that~$F$ does not satisfy the strict monotonicity condition~\ref{H:F-strict-mono}.
		It is then straightforward to check that $(u_1,m_1)$ and $(u_2,m_2)$ are both distinct classical solutions of the MFG system $-u^{\prime\prime} + H(x,u^\prime) = F[m]$ and $-m^{\prime\prime} - \left(m \frac{\partial H}{\partial p}(x,u^\prime)\right)^\prime = G$ in $\Omega$ along with homogeneous Dirichlet conditions for both $u$ and $m$ on $\partial\Omega$.
		This illustrates the possibility of nonuniqueness of solutions in the general case when~\ref{H:F-strict-mono} is not satisfied. 
\end{example}	}

\subsection{Regularized problems}

\subsubsection{A family of regularized Hamiltonians}\label{sec:regularization}

In order to analyse the regularized problems~\eqref{eq:regularized_MFG} in a unified way for a variety of different choices of regularizations, we consider a family of regularized Hamiltonians $\{H_{\lambda}\}_{\lambda\in(0,1]}$ that satisfies the following hypotheses.
\begin{enumerate}[label={(H\arabic*)},resume]
	\item $\{H_{\lambda}\}_{\lambda\in(0,1]}$ is a family of real-valued functions on $\overline{\Omega}\times \R^\dim$ for which there exists a continuous function $\omega:{[0,1]}\to [0,\infty)$ satisfying $\omega(0)=0$ such that, for each $\lambda\in (0,1]$:
	\begin{itemize}
		\item  $H_{\lambda}$ is continuous on $\overline{\Omega}\times \R^\dim$ and
		\begin{equation}\label{H-bounds:lipschitz-lam}
			\begin{aligned}
				|H_{\lambda}(x,p)-H_{\lambda}(x,q)|\leq L_H\lvert p-q\rvert &&&\forall(x,p,q)\in\overline{\Omega}\times\mathbb{R}^d\times\mathbb{R}^d,
			\end{aligned}
		\end{equation}
		\item for each $x\in\overline{\Omega}$, the map $\mathbb{R}^d\ni p\mapsto H_{\lambda}(x,p)$ is convex; 
		\item the partial derivative $\frac{\partial H_{\lambda}}{\partial p}:\overline{\Omega}\times\mathbb{R}^d\to\mathbb{R}^d$ exists and is continuous;
		\item $H_{\lambda}$ satisfies the following inequality
		\begin{equation}
			\begin{aligned}
				|H_{\lambda}(x,p) - H(x,p)|\leq \omega(\lambda)&& &\forall (x,p)\in \overline{\Omega}\times\mathbb{R}^d \label{H-bounds:reg-unif-approx_1}.
			\end{aligned}
		\end{equation} 
	\end{itemize}\label{H:reg-family} 
\end{enumerate}
{Note that in~\eqref{H-bounds:lipschitz-lam} we are supposing that there is a uniform Lipschitz constant $L_H$ for the original Hamiltonian $H$ and the family of regularizations $\{H_\lambda\}_{\lambda\in (0,1]}$.}
Furthermore, the bounds~\eqref{H-bounds:lipschitz-lam} and~\eqref{H-bounds:reg-unif-approx_1} imply that, for all $\lambda\in (0,1]${,
	\begin{equation}\label{H-bounds:linear-growth-lam}
		|H_{\lambda}(x,p)|\leq |H(x,0)|+\sup_{\sigma\in [0,1]}\omega(\sigma) + L_H|p|
		\leq \widetilde{C}_H\left(|p|+1\right)\quad \forall (x,p)\in \overline{\Omega}\times\mathbb{R}^d,
	\end{equation}
	f}or some constant $\widetilde{C}_H\geq 0$ independent of $\lambda\in (0,1]$.
{Note also that the restriction of $\lambda$ to the interval $(0,1]$ is not essential, e.g.\ it can be replaced by some more general bounded sets for which $0$ is a limit point.}

\subsubsection{Examples of families of regularized Hamiltonians}
{In this subsection, we illustrate several possible choices of regularizations that satisfy~\ref{H:reg-family}.}

\begin{example}
	{As a first example, we} consider the Moreau--Yosida regularization of the Hamiltonian~\eqref{Hamiltonian} with respect to $p$. For each $\lambda\in (0,1]$, let $H_{\lambda}:\overline{\Omega}\times\mathbb{R}^d\to\mathbb{R}$ denote the Moreau--Yosida regularization of $H$ w.r.t.\ $p$ defined by
	\begin{equation}\label{H-moreau-yosida-reg-defn}
		H_{\lambda}(x,p)\coloneqq \inf_{q\in\mathbb{R}^d}\left\{H(x,q) + \frac{1}{2\lambda }|q-p|^{2}\right\}.
	\end{equation}
	The following lemma shows that Moreau--Yosida regularization leads to a family of Hamiltonians that satisfies~\ref{H:reg-family}. 
\end{example}

\begin{lemma}[Moreau--Yosida regularization]\label{lemma-moreau-yosida-approx-properties}
	For each $\lambda\in (0,1]$, let $H_{\lambda}$ be defined by~\eqref{H-moreau-yosida-reg-defn}. 
	Then the family $\{H_{\lambda}\}_{\lambda\in (0,1]}$ satisfies~\ref{H:reg-family}.
	In particular, the bound~\eqref{H-bounds:lipschitz-lam} holds with same Lipschitz constant as in~\eqref{bounds:lipschitz} and~\eqref{H-bounds:reg-unif-approx_1} holds with $\omega(\lambda)=\frac{L_H^2\lambda}{2}$, i.e.\
	\begin{equation}\label{moreau-yosida-bounds:reg-unif-approx_1}
		\sup_{(x,p)\in \overline{\Omega}\times\mathbb{R}^d}|H_{\lambda}(x,p) - H(x,p)|\leq {\frac{L_H^2\lambda }{2}}.
	\end{equation}
\end{lemma}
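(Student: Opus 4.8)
The plan is to verify each of the bullet points in~\ref{H:reg-family} in turn, exploiting classical properties of the Moreau envelope together with the specific structure~\eqref{Hamiltonian} and the Lipschitz bound~\eqref{bounds:lipschitz}. First I would observe that for each fixed $x\in\overline{\Omega}$, the map $p\mapsto H(x,p)$ is a finite-valued, convex, globally Lipschitz function on $\mathbb{R}^d$ with Lipschitz constant $L_H$ by~\eqref{bounds:lipschitz}; hence the infimum in~\eqref{H-moreau-yosida-reg-defn} is over a coercive function and is attained, and standard convex-analysis facts apply. The standard facts I would invoke are: (i) $H_\lambda(x,\cdot)$ is convex; (ii) $H_\lambda(x,\cdot)$ is $C^1$ with $\frac{\partial H_\lambda}{\partial p}(x,p)=\frac1\lambda(p-q_\lambda(x,p))$, where $q_\lambda(x,p)$ is the (unique, by strict convexity of the added quadratic) minimizer — equivalently $\frac{\partial H_\lambda}{\partial p}(x,p)=\mathrm{prox}$-type identity; and (iii) $H_\lambda(x,p)\uparrow H(x,p)$ as $\lambda\downarrow0$.

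For the uniform-Lipschitz bound~\eqref{H-bounds:lipschitz-lam}: taking any $q$ near-optimal for $p$ and using it as a competitor for $p'$, one gets $H_\lambda(x,p')\le H(x,q)+\frac1{2\lambda}|q-p'|^2$; subtracting the corresponding near-optimal inequality for $p$ and letting the near-optimality parameter vanish gives $H_\lambda(x,p')-H_\lambda(x,p)\le \frac1{2\lambda}(|q_\lambda-p'|^2-|q_\lambda-p|^2)$, which I would then bound. Actually the cleanest route is: $H_\lambda$ is an infimal convolution of $H(x,\cdot)$ with $\frac1{2\lambda}|\cdot|^2$, and the infimal convolution of an $L_H$-Lipschitz function with any function is again $L_H$-Lipschitz (a one-line argument from the definition: $H_\lambda(x,p')\le H(x,q)+\frac1{2\lambda}|q-(p'-p)-p|^2$ evaluated at the shifted minimizer, using $|H(x,q')-H(x,q)|\le L_H|p'-p|$ for the shift). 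This gives~\eqref{H-bounds:lipschitz-lam} with the same constant $L_H$, uniformly in $\lambda$ and in $x$.

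For the approximation bound~\eqref{moreau-yosida-bounds:reg-unif-approx_1}: on the one hand, taking $q=p$ as a competitor in~\eqref{H-moreau-yosida-reg-defn} gives $H_\lambda(x,p)\le H(x,p)$. On the other hand, for the minimizer $q_\lambda=q_\lambda(x,p)$ we have $H_\lambda(x,p)=H(x,q_\lambda)+\frac1{2\lambda}|q_\lambda-p|^2\ge H(x,p)-L_H|q_\lambda-p|+\frac1{2\lambda}|q_\lambda-p|^2$ by~\eqref{bounds:lipschitz}; minimizing the scalar function $t\mapsto -L_H t+\frac{t^2}{2\lambda}$ over $t=|q_\lambda-p|\ge0$ shows this lower bound is at least $H(x,p)-\frac{L_H^2\lambda}{2}$. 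Combining the two inequalities yields $0\le H(x,p)-H_\lambda(x,p)\le \frac{L_H^2\lambda}{2}$ for all $(x,p)$, which is~\eqref{moreau-yosida-bounds:reg-unif-approx_1}, and since the right-hand side tends to $0$ as $\lambda\downarrow0$ we may take $\omega(\lambda)=\frac{L_H^2\lambda}{2}$, which is continuous with $\omega(0)=0$ as required.

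I do not expect any single step to be a serious obstacle; the only point requiring a little care is the joint continuity of $(x,p)\mapsto\frac{\partial H_\lambda}{\partial p}(x,p)$ (not just continuity in $p$ for fixed $x$), since~\ref{H:reg-family} demands continuity on $\overline{\Omega}\times\mathbb{R}^d$. For this I would argue that $q_\lambda(x,p)$ depends continuously on $(x,p)$: the objective $(q;x,p)\mapsto H(x,q)+\frac1{2\lambda}|q-p|^2$ is jointly continuous (using uniform continuity of $b,f$, hence of $H$, on $\overline{\Omega}\times\mathbb{R}^d$) and, for fixed $(x,p)$, strictly convex and coercive in $q$ with minimizer confined to a bounded set uniformly for $(x,p)$ in compacts (using the $L_H$-Lipschitz bound on $H(x,\cdot)$ to get $|q_\lambda(x,p)-p|\le L_H\lambda$); a standard argument with $\Gamma$-convergence of the objectives, or simply a compactness-plus-uniqueness argument on minimizers, then gives continuity of $q_\lambda$, whence $\frac{\partial H_\lambda}{\partial p}=\frac1\lambda(\mathrm{id}-q_\lambda)$ is continuous. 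With all four bullet points of~\ref{H:reg-family} verified and $\omega$ as above, the lemma follows.
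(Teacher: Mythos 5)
Your proof is correct and follows essentially the same route as the paper's: both establish $H_\lambda\le H$ by taking $q=p$ as competitor, and both obtain the lower bound from $H(x,p)-H_\lambda(x,p)\le L_H|q_\lambda-p|-\tfrac{1}{2\lambda}|q_\lambda-p|^2\le\tfrac{L_H^2\lambda}{2}$ via the same quadratic optimization (phrased in the paper as Young's inequality). The only cosmetic difference is the Lipschitz step, where the paper bounds the gradient through the inclusion $\tfrac{\partial H_\lambda}{\partial p}(x,p)\in\partial_pH(x,J_\lambda(x,p))$, which lies in the closed ball of radius $L_H$, instead of your inf-convolution shift argument; your discussion of the joint continuity of the derivative via continuity of the minimizer is in fact more detailed than the paper's, which only asserts it.
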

\begin{proof}
	{The result is essentially already well-known, see for instance~\cite[Theorem~5.2,~p.76]{Aubin1993} or \cite[Theorem 6.5.7]{aubin2009set}, so we shall only point out a few details.
		It is well-known that, for each $(x,p)\in \overline{\Omega}\times \mathbb{R}^d$, the infimum in~\eqref{H-moreau-yosida-reg-defn} is attained at a unique point $J_\lambda(x,p)\in\mathbb{R}^d$, and that $H_{\lambda}$ is convex and continuously differentiable with respect to $p$, with partial derivative 
		\begin{equation}
			\frac{\partial H_{\lambda}}{\partial p}(x,p)=\frac{p-J_\lambda(x,p)}{\lambda}\in \partial_p H\left(x,J_\lambda(x,p)\right) \quad\forall (x,p)\in\overline{\Omega}\times\R^\dim,
		\end{equation}
		see e.g.~\cite[Eq.~(55),\,p.~76]{Aubin1993}.
		The continuity of the partial derivative $\frac{\partial H_\lambda}{\partial p}$ over $\overline{\Omega}\times \R^d$ can also be shown using the above hypotheses on $H$ and using the uniqueness of $J_\lambda(x,p)$ for each $(x,p)$.
		Since the subdifferential set $\partial_p H(x,q)$ is contained in the closed ball of radius $L_H$ for any $q\in\R^\dim$, this implies that $\sup_{(x,p)\in \overline{\Omega}\times\R^\dim}\left\lvert\frac{\partial H_{\lambda}}{\partial p}(x,p)\right\rvert\leq L_H$ and thus that~\eqref{H-bounds:lipschitz-lam} holds with the same Lipschitz constant $L_H$.
		To prove~\eqref{moreau-yosida-bounds:reg-unif-approx_1}, first note that $H_\lambda(x,p)\leq H(x,p)$ follows immediately from the definition.
		Also~\eqref{bounds:lipschitz} implies that
		\begin{multline}
			0\leq H(x,p) - H_\lambda(x,p)
			=H(x,p) - H(x,J(x,p))-\frac{\lvert p-J(x,p) \rvert^2}{2\lambda}
			\\ \leq L_H\lvert p-J(x,p)\rvert - \frac{\lvert p-J(x,p) \rvert^2}{2\lambda}
			\leq \frac{L_H^2\lambda }{2},
		\end{multline}
		where the final inequality above follows from Young's inequality.
		This proves~\eqref{moreau-yosida-bounds:reg-unif-approx_1}.}
\end{proof}

{In general, if $H_\lambda$ is defined by Moreau--Yosida regularization~\eqref{H-moreau-yosida-reg-defn}, then $H_\lambda$ is $C^1$-regular with respect to $p$, but $H_\lambda$ is not necessarily $C^2$-regular. To see this, it is enough to consider the example $H(x,p)=\lvert p \rvert$, $p\in \R^\dim$.
In some cases, it is of interest to consider other choices of regularization that can yield higher regularity such as mollification, which we consider below.

\begin{example}
	As a further example, we consider the mollification of the Hamiltonian w.r.t.\ $p$. 
	Let $\rho \in C^r(\R^\dim)$, where $r\in \N\cup\{\infty\}$, denote a nonnegative function with compact support in the unit ball $B_1(0)$, that satisfies $\int_{\mathbb{R}^d}\rho(q)\mathrm{d}q=1$.
	Given $\lambda>0$, let $\rho_{\lambda}\in C_0^{r}(\mathbb{R}^d)$ be the function given by $\rho_{\lambda}(q) \coloneqq \lambda^{-d}\rho(q/\lambda)$, $q\in\mathbb{R}^d$, so that $\int_{\mathbb{R}^d}\rho_{\lambda}(q)\mathrm{d}q=1$. 
	%
	Given $\lambda>0$, let $H_{\lambda}:\overline{\Omega}\times\mathbb{R}^d\to\mathbb{R}$ denote the mollification of $H$ w.r.t.\ $p$ defined by
	\begin{equation}\label{H-mollifcation}
		H_{\lambda}(x,p)\coloneqq \int_{\mathbb{R}^d}H(x,q)\rho_{\lambda}(p-q)\mathrm{d}q.
	\end{equation}
	Lemma~\ref{lemma-mollification-reg-approx-properties} shows that this regularization satisfies \ref{H:reg-family}.
\end{example} 

\begin{lemma}[Mollification-based regularization]\label{lemma-mollification-reg-approx-properties}
	For each $\lambda\in (0,1]$, let $H_{\lambda}$ be defined by \eqref{H-mollifcation}.
	Then, the family $\{H_{\lambda}\}_{\lambda\in (0,1]}$ satisfies \ref{H:reg-family}. 
	In particular, the bound~\eqref{H-bounds:lipschitz-lam} holds with same Lipschitz constant as in~\eqref{bounds:lipschitz} and~\eqref{H-bounds:reg-unif-approx_1} holds with $\omega(\lambda)=C_{\rho}{L_H\lambda}$, i.e.\
	\begin{equation}\label{H-mollification-approx-1}
		\sup_{(x,p)\in \overline{\Omega}\times\mathbb{R}^d}|H_{\lambda}(x,p) - H(x,p)|\leq C_{\rho}L_H\lambda,
	\end{equation}
	where $C_{\rho}\coloneqq \int_{B_1(0)}|q|\rho(q)\mathrm{d}q$, and $H_{\lambda}$ is $C^r$-regular w.r.t.\ $p$.
\end{lemma}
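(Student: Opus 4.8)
The plan is to verify in turn each of the four bullet points constituting hypothesis~\ref{H:reg-family}, together with the stated approximation rate~\eqref{H-mollification-approx-1} and the $C^r$-regularity, by exploiting the fact that mollification in $p$ commutes with the relevant structural properties of $H$. It is convenient to use both the representation~\eqref{H-mollifcation} and its symmetric form $H_\lambda(x,p)=\int_{\mathbb{R}^d}H(x,p-y)\rho_\lambda(y)\,\mathrm{d}y$, which follows from the change of variables $q=p-y$ together with $\int_{\mathbb{R}^d}\rho_\lambda=1$.

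First I would establish the Lipschitz bound~\eqref{H-bounds:lipschitz-lam}: subtracting the two symmetric representations for $H_\lambda(x,p)$ and $H_\lambda(x,q)$, bounding the integrand by $L_H|p-q|$ via~\eqref{bounds:lipschitz}, and using $\rho_\lambda\ge 0$ with $\int_{\mathbb{R}^d}\rho_\lambda=1$, gives exactly the constant $L_H$. Convexity in $p$ follows in the same spirit: for $t\in[0,1]$ the pointwise convexity of $q\mapsto H(x,q)$ gives $H(x,tp_1+(1-t)p_2-y)\le tH(x,p_1-y)+(1-t)H(x,p_2-y)$, and integration against the nonnegative measure $\rho_\lambda(y)\,\mathrm{d}y$ preserves this inequality. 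Continuity of $H_\lambda$ on $\overline{\Omega}\times\mathbb{R}^d$ and the $C^r$-regularity in $p$ both come from differentiating under the integral sign in~\eqref{H-mollifcation}: since $\rho_\lambda\in C^r(\mathbb{R}^d)$ has compact support and $H$ is continuous, hence locally bounded, for every multi-index $\alpha$ with $|\alpha|\le r$ one may differentiate under the integral to obtain $\partial_p^\alpha H_\lambda(x,p)=\int_{\mathbb{R}^d}H(x,q)\,\partial_p^\alpha\rho_\lambda(p-q)\,\mathrm{d}q$, and joint continuity in $(x,p)$ follows by dominated convergence using the local uniform continuity of $H$ on $\overline{\Omega}\times\mathbb{R}^d$. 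In particular $\partial H_\lambda/\partial p$ exists and is continuous, as required.

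It remains to prove~\eqref{H-mollification-approx-1}. Using $\int_{\mathbb{R}^d}\rho_\lambda=1$, write $H_\lambda(x,p)-H(x,p)=\int_{\mathbb{R}^d}\bigl(H(x,p-y)-H(x,p)\bigr)\rho_\lambda(y)\,\mathrm{d}y$; bounding the integrand by $L_H|y|$ via~\eqref{bounds:lipschitz} and performing the scaling $y=\lambda z$ turns $\int_{\mathbb{R}^d}|y|\rho_\lambda(y)\,\mathrm{d}y$ into $\lambda\int_{B_1(0)}|z|\rho(z)\,\mathrm{d}z=C_\rho\lambda$, where we use $\supp\rho\subset B_1(0)$. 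This gives~\eqref{H-mollification-approx-1}, i.e.\ \eqref{H-bounds:reg-unif-approx_1} holds with the continuous function $\omega(\lambda)=C_\rho L_H\lambda$ satisfying $\omega(0)=0$, completing the verification of~\ref{H:reg-family}.

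None of these steps involves anything beyond interchanging integration with an inequality or a derivative, so there is no serious obstacle; the only point that deserves a line of justification is the local uniform continuity of $H$ on $\overline{\Omega}\times\mathbb{R}^d$ — used for the joint continuity of the derivatives $\partial_p^\alpha H_\lambda$ — which follows from the Lipschitz bound~\eqref{bounds:lipschitz} in $p$ combined with the uniform continuity of $b$ and $f$ on $\overline{\Omega}\times\mathcal{A}$ applied in the defining formula~\eqref{Hamiltonian}.
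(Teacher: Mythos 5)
Your proof is correct and is precisely the ``elementary properties of convolution'' argument that the paper invokes when it omits the proof: the symmetric representation $H_\lambda(x,p)=\int_{\mathbb{R}^d}H(x,p-y)\rho_\lambda(y)\,\mathrm{d}y$ yields the Lipschitz bound, convexity, and the rate $C_\rho L_H\lambda$ after the scaling $y=\lambda z$, while differentiation under the integral sign in the form $\int H(x,q)\partial_p^\alpha\rho_\lambda(p-q)\,\mathrm{d}q$ gives the $C^r$-regularity. Your added remark justifying the joint continuity via the local uniform continuity of $H$ (inherited from the uniform continuity of $b$ and $f$ in~\eqref{Hamiltonian}) is the right detail to flag, and nothing is missing.
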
 
The proof of this result is based on elementary properties of convolution and so it omitted.
Lemma~\ref{lemma-mollification-reg-approx-properties} shows that mollification can have the advantage of producing smoother regularized Hamiltonians than Moreau--Yosida regularization in some cases.

In addition to the examples above, one can consider various alternatives or improvements, which might be useful for some practical applications.
For instance, in cases where the regularity of the function $p\mapsto H(x,p)$ might depend on $x\in \Omega$, then it might be useful in applications to adapt the regularization to the point $x\in \Omega$, e.g.\ one can consider $H_\lambda(x,p)\coloneqq \int_{\R^d} H(x,q)\rho_{\sigma(x)}(p-q)\mathrm{d}q$ where $\sigma(x)\in (0,\lambda]$ is some chosen function that controls the local regularization for $x\in\Omega$.
}

\subsubsection{Weak formulation of regularized problems}
The following Definition states the notion of weak solution for the regularized problems~\eqref{eq:regularized_MFG}
\begin{definition}[Weak Solution of regularized MFG system]\label{weakdef-moreau-yosida}
	Assume~\ref{H:reg-family}. 
	For each $\lambda\in (0,1]$, a pair $({u}_{\lambda},{m}_{\lambda})\in H_0^1(\Omega)\times H_0^1(\Omega)$ is a weak solution of the regularized MFG system~\eqref{eq:regularized_MFG} if 
	\begin{subequations}\label{weakform-moreau-yosida}
		\begin{align}
			\int_{\Omega}\nu\nabla {u}_{\lambda}\cdot\nabla  \psi+H_{\lambda}[\nabla {u}_{\lambda}]\psi\mathrm{d}x
			=&\langle {F}[{m}_{\lambda}],\psi\rangle_{H^{-1}\times H_0^1} &&\forall \psi\in H^1_0(\Omega),
			\label{weakform1-space-time-moreau-yosida}
			\\ 
			\int_{\Omega}\nu\nabla {m}_{\lambda}\cdot\nabla \phi+{m}_{\lambda}\frac{\partial {H}_{\lambda}}{\partial p}[\nabla {u}_{\lambda}]\cdot\nabla \phi \mathrm{d}x&=\langle {G},\phi\rangle_{H^{-1}\times H_0^1} &&\forall \phi \in H^1_0(\Omega).\label{weakform2-space-time-moreau-yosida} 
		\end{align} 
	\end{subequations}
\end{definition}

{Note that when considering the regularized problems~\eqref{eq:regularized_MFG}, the notions of weak solutions from Definitions~\ref{weakdef} and~\ref{weakdef-moreau-yosida} coincide. This is because there holds $D_pH_{\lambda}[v]=\left\{\frac{\partial H_{\lambda}}{\partial p}(\cdot,\nabla v)\right\}$ for all $v\in W^{1,1}(\Omega)$, since $H_{\lambda}$ is differentiable with respect to the gradient variable.
	
	\begin{remark}[Existence and uniqueness of weak solutions of regularized system \eqref{eq:regularized_MFG}]
		Assuming~\ref{H:reg-family}, we can show that the statement of Theorem~\ref{theorem-existence-uniqueness-mfg-pdi} can be transposed over to the regularized problems~\eqref{eq:regularized_MFG}, under the same assumptions on the data $F$ and $G$. 
		This is simply because Theorem~\ref{theorem-existence-uniqueness-mfg-pdi} can be applied also to the regularized problems, and the notions of weak solution from Definitions~\ref{weakdef} and~\ref{weakdef-moreau-yosida} coincide, as explained above.
		Thus, there exists at least one weak solution of~\eqref{eq:regularized_MFG} in the above sense, and uniqueness also holds under hypotheses~\ref{H:G-postive} and~\ref{H:F-strict-mono}. 
		{However, Example~\ref{ex:nonuniqueness} illustrates how uniqueness of solutions of the regularized problems can also fail in general without the additional hypotheses~\ref{H:G-postive} and~\ref{H:F-strict-mono}.}
\end{remark}}

\section{Main results}\label{sec-main-results}
\subsection{Basic convergence}

The first main result shows that, up to subsequences, the solutions of the regularized problems~\eqref{eq:regularized_MFG} converge to solutions of the MFG PDI~\eqref{mfg-pdi-sys}.
\begin{theorem}[Convergence of solutions of the regularized problems]\label{theorem-convergence-moreau-yosida}
	
	Assume~\ref{H:reg-family}. 
	Suppose that $\{\lambda_j\}_{j\in\mathbb{N}}\subset (0,1]$ is a sequence of real numbers converging to zero, and, {for each $j\in \mathbb{N}$, let $(u_{\lambda_j},m_{\lambda_j})$ denote a weak solution of the regularized problem~\eqref{eq:regularized_MFG} with $\lambda = \lambda_j\in (0,1]$.}
	Then, the sequences $\{u_{\lambda_j}\}_{j\in\mathbb{N}}$, $\{m_{\lambda_j}\}_{j\in\mathbb{N}}$ are uniformly bounded in $H_0^1(\Omega)$. Moreover, there exists a subsequence of $\{(u_{\lambda_j},m_{\lambda_j})\}_{j\in\mathbb{N}}$ (to which we pass without change of notation) and a weak solution $(u,m)$ of~\eqref{mfg-pdi-sys} such that
	\begin{equation}\label{m-conv-2-moreau-yosida}
		\begin{aligned}
			{u}_{\lambda_j} \to u\quad\text{in } H_0^1(\Omega), &&& 
			{m}_{\lambda_j} \rightharpoonup m\quad\text{in }H_0^1(\Omega),
			\\
			{m}_{\lambda_j}\to m \quad \text{in }  \mathcal{X}, &&& {m}_{\lambda_j}\to m \quad \text{in }L^q(\Omega),
		\end{aligned}
	\end{equation}
	as $j\to\infty$, for any $q\in [1,2^*)$, where $2^*\coloneqq \infty$ if $d=2$ and $2^*\coloneqq \frac{2d}{d-2}$ if $d\geq 3$, and for any $q\in[1,\infty]$ if $d=1$. 
\end{theorem}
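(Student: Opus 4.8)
The plan is to proceed by compactness: first derive uniform $H^1_0$ bounds on the regularized solutions, then extract weakly/strongly convergent subsequences, and finally pass to the limit in the weak formulations \eqref{weakform1-space-time-moreau-yosida}--\eqref{weakform2-space-time-moreau-yosida}, taking care that the limit of the drift fields $\frac{\partial H_{\lambda_j}}{\partial p}[\nabla u_{\lambda_j}]$ lands in $D_pH[u]$.

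First I would establish the uniform bound on $\{u_{\lambda_j}\}$. Testing \eqref{weakform1-space-time-moreau-yosida} with $\psi = u_{\lambda_j}$, using the linear growth bound \eqref{H-bounds:linear-growth-lam} for $H_{\lambda_j}$ (uniform in $\lambda$), the linear growth \eqref{F-linear-growth} of $F$, the compact embedding $H^1_0(\Omega)\hookrightarrow\mathcal{X}$, Poincar\'e's inequality and Young's inequality, one obtains $\norm{\nabla u_{\lambda_j}}_{L^2(\Omega)}\lesssim 1$ with constant independent of $j$. Then the bound on $\{m_{\lambda_j}\}$ follows by testing \eqref{weakform2-space-time-moreau-yosida} with $\phi = m_{\lambda_j}$: the transport term $\int_\Omega m_{\lambda_j}\frac{\partial H_{\lambda_j}}{\partial p}[\nabla u_{\lambda_j}]\cdot\nabla m_{\lambda_j}\,\mathrm{d}x$ is controlled using $\bigl\lvert\frac{\partial H_{\lambda_j}}{\partial p}\bigr\rvert\leq L_H$ (from \eqref{H-bounds:lipschitz-lam}), again with Poincar\'e and Young absorbing $\norm{\nabla m_{\lambda_j}}_{L^2}^2$ into the coercive term, and with $\langle G,m_{\lambda_j}\rangle$ bounded via $\norm{G}_{H^{-1}}$. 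This gives $\norm{m_{\lambda_j}}_{H^1_0}\lesssim 1$. By reflexivity and the compact embeddings $H^1_0(\Omega)\hookrightarrow\mathcal{X}$ and $H^1_0(\Omega)\hookrightarrow\hookrightarrow L^q(\Omega)$ for $q<2^*$, I extract a subsequence with $u_{\lambda_j}\rightharpoonup u$, $m_{\lambda_j}\rightharpoonup m$ in $H^1_0(\Omega)$, and $u_{\lambda_j}\to u$, $m_{\lambda_j}\to m$ strongly in $\mathcal{X}$ and in $L^q(\Omega)$, $q\in[1,2^*)$. Since $\bigl\{\frac{\partial H_{\lambda_j}}{\partial p}[\nabla u_{\lambda_j}]\bigr\}_j$ is bounded in $L^\infty(\Omega;\R^d)$, I also extract $\frac{\partial H_{\lambda_j}}{\partial p}[\nabla u_{\lambda_j}]\rightharpoonup \tilde b_*$ weakly-$*$ in $L^\infty(\Omega;\R^d)$.

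Next I would pass to the limit in the HJB equation and upgrade to strong $H^1$ convergence of $u_{\lambda_j}$. For the weak limit: in \eqref{weakform1-space-time-moreau-yosida} the linear term passes by weak convergence; $F[m_{\lambda_j}]\to F[m]$ in $H^{-1}$ by continuity of $F$ on $\mathcal{X}$ and strong convergence $m_{\lambda_j}\to m$ in $\mathcal{X}$; and for the Hamiltonian term I write $H_{\lambda_j}[\nabla u_{\lambda_j}] - H[\nabla u] = (H_{\lambda_j}[\nabla u_{\lambda_j}]-H[\nabla u_{\lambda_j}]) + (H[\nabla u_{\lambda_j}]-H[\nabla u])$, where the first term is $\leq\omega(\lambda_j)\to 0$ uniformly by \eqref{H-bounds:reg-unif-approx_1}. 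This shows $u$ satisfies \eqref{weakform1}, provided we know $H[\nabla u_{\lambda_j}]\to H[\nabla u]$ in, say, $L^1(\Omega)$ --- which needs strong convergence of the gradients. To get strong $H^1$ convergence of $u_{\lambda_j}$, I test the difference of \eqref{weakform1-space-time-moreau-yosida} for $u_{\lambda_j}$ and \eqref{weakform1} for the candidate limit $u$ (or, cleaner, use the convexity/monotonicity structure): testing \eqref{weakform1-space-time-moreau-yosida} with $\psi = u_{\lambda_j}-u$ gives
\begin{equation*}
	\nu\norm{\nabla(u_{\lambda_j}-u)}_{L^2}^2 = \langle F[m_{\lambda_j}], u_{\lambda_j}-u\rangle - \int_\Omega H_{\lambda_j}[\nabla u_{\lambda_j}](u_{\lambda_j}-u)\,\mathrm{d}x - \nu\int_\Omega \nabla u\cdot\nabla(u_{\lambda_j}-u)\,\mathrm{d}x,
\end{equation*}
and the right-hand side tends to $0$: the first term since $F[m_{\lambda_j}]$ is bounded in $H^{-1}$ and $u_{\lambda_j}-u\rightharpoonup 0$ in $H^1_0$ (hence $\to 0$ in $L^2$ by compactness); the second term since $H_{\lambda_j}[\nabla u_{\lambda_j}]$ is bounded in $L^2$ (by \eqref{H-bounds:linear-growth-lam}) and $u_{\lambda_j}-u\to 0$ in $L^2$; the third term by weak convergence. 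Hence $u_{\lambda_j}\to u$ strongly in $H^1_0(\Omega)$, which in turn gives $\nabla u_{\lambda_j}\to\nabla u$ pointwise a.e.\ along a further subsequence, so $H[\nabla u_{\lambda_j}]\to H[\nabla u]$ a.e.\ and, by the growth bound and dominated convergence (or Vitali), in $L^2(\Omega)$. This closes the argument for \eqref{weakform1}.

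The main obstacle is identifying the limit drift: I must show $\tilde b_* \in D_pH[u]$, i.e.\ $\tilde b_*(x)\in\partial_p H(x,\nabla u(x))$ for a.e.\ $x$, and also that $m_{\lambda_j}\frac{\partial H_{\lambda_j}}{\partial p}[\nabla u_{\lambda_j}]\rightharpoonup m\tilde b_*$ so that the KFP equation \eqref{weakform2} holds in the limit. For the latter weak convergence, I combine strong convergence $m_{\lambda_j}\to m$ in $L^q(\Omega)$ (with $q$ chosen so that $q$ and its conjugate pair correctly against $\nabla\phi\in L^2$ and the $L^\infty$ drift --- e.g.\ $m_{\lambda_j}\to m$ in $L^2$ if $d\le 4$, or more carefully via the product of strong $L^q$ convergence and weak-$*$ $L^\infty$ convergence against $\nabla\phi\in L^2\cap L^{q'}$, using density of smooth $\phi$) with the weak-$*$ convergence of the drifts; a standard strong-times-weak product lemma then yields the claim, using that $\nabla\phi$ is a fixed test function. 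For the inclusion $\tilde b_*\in\partial_p H(\cdot,\nabla u)$, the key point is that $\frac{\partial H_{\lambda_j}}{\partial p}(x,\nabla u_{\lambda_j}(x))\in\partial_p H_{\lambda_j}(x,\nabla u_{\lambda_j}(x))$ and, by the subdifferential inequality, for every $q\in\R^d$,
\begin{equation*}
	H_{\lambda_j}(x,q)\geq H_{\lambda_j}(x,\nabla u_{\lambda_j}(x)) + \tfrac{\partial H_{\lambda_j}}{\partial p}(x,\nabla u_{\lambda_j}(x))\cdot(q-\nabla u_{\lambda_j}(x)) \quad\text{for a.e. }x.
\end{equation*}
Using the uniform bound $\lvert H_{\lambda_j}-H\rvert\leq\omega(\lambda_j)$, the pointwise a.e.\ convergence $\nabla u_{\lambda_j}\to\nabla u$, the continuity of $H$, and passing to the limit (for fixed rational $q$, then all $q$ by continuity and density), one obtains $H(x,q)\geq H(x,\nabla u(x)) + \tilde b_*(x)\cdot(q-\nabla u(x))$ for a.e.\ $x$ --- but this requires knowing $\frac{\partial H_{\lambda_j}}{\partial p}(x,\nabla u_{\lambda_j}(x))\to\tilde b_*(x)$ pointwise a.e., which is stronger than weak-$*$ $L^\infty$ convergence. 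The cleanest route to bridge this gap is to invoke a measurable-selection / Mazur-type argument: since $p\mapsto\partial_p H(x,p)$ is upper semicontinuous with closed convex values (being a subdifferential of a convex function) and $\nabla u_{\lambda_j}\to\nabla u$ a.e., the weak-$*$ $L^\infty$ limit of selections of $\partial_p H_{\lambda_j}(\cdot,\nabla u_{\lambda_j})$ lies in $\overline{\mathrm{conv}}\,\partial_p H(\cdot,\nabla u) = \partial_p H(\cdot,\nabla u)$ a.e.; this is exactly the kind of closedness result for $D_pH$ established in \cite{osborne2022analysis}, and I would cite or adapt the relevant lemma there (e.g.\ using that $H_{\lambda_j}\to H$ uniformly implies epi-convergence, hence graph-convergence of the subdifferentials). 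With $\tilde b_*\in D_pH[u]$ identified and the product convergence in hand, passing to the limit in \eqref{weakform2-space-time-moreau-yosida} gives \eqref{weakform2}, so $(u,m)$ is a weak solution of \eqref{mfg-pdi-sys} in the sense of Definition \ref{weakdef}, completing the proof.
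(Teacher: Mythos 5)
Your overall architecture---uniform $H^1_0$ bounds, extraction of weakly/strongly convergent subsequences, strong $H^1$ convergence of the value functions obtained by testing with $u_{\lambda_j}-u$, identification of the weak-$*$ limit of the drifts as an element of $D_pH[u]$ via a Mazur/graph-closedness argument, and passage to the limit in the KFP equation by pairing strong $L^2$ convergence of $m_{\lambda_j}$ against weak-$*$ convergence of the drifts---is essentially the same as the paper's. In particular, the drift identification you flag as the main obstacle is carried out in the paper in Lemma~\ref{lemma-reg-inclusion} exactly along the lines you sketch: the subdifferential inequality for $H_{\lambda_j}$, the uniform bound \eqref{H-bounds:reg-unif-approx_1}, weak $L^2$ convergence of the nonnegative remainder functions, and Mazur's theorem; so citing/adapting that closedness result is the intended route.

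The one step that does not survive scrutiny is your derivation of the uniform a priori bounds. Testing \eqref{weakform2-space-time-moreau-yosida} with $m_{\lambda_j}$ produces the transport term $\int_\Omega m_{\lambda_j}\tilde b_j\cdot\nabla m_{\lambda_j}\,\mathrm{d}x$, which is only controlled by $L_H\norm{m_{\lambda_j}}_{L^2(\Omega)}\norm{\nabla m_{\lambda_j}}_{L^2(\Omega)}$; after Young's inequality this leaves a term proportional to $\norm{m_{\lambda_j}}_{L^2(\Omega)}^2$, which by Poincar\'e is of the \emph{same order} as the coercive term $\nu\norm{\nabla m_{\lambda_j}}_{L^2(\Omega)}^2$, and there is no zeroth-order term in the equation available to absorb it. The bilinear form is therefore not coercive unless $L_H$ is small relative to $\nu$ divided by the Poincar\'e constant; it only satisfies a G\aa rding inequality, so the absorption argument fails in general. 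The same objection applies to your bound for $u_{\lambda_j}$ (where, in addition, bounding $\langle F[m_{\lambda_j}],u_{\lambda_j}\rangle$ via \eqref{F-linear-growth} requires the bound on $m_{\lambda_j}$ to be in hand first, so the order of the two estimates should be reversed). The paper instead obtains both bounds from Lemma~\ref{lemma-uniform-L-inv-bound} and Lemma~\ref{lemma-HJB-continuous-moreau-yosida}: using \eqref{H-bounds:lipschitz-lam} one rewrites the Hamiltonian term as $\tilde b\cdot\nabla u_{\lambda_j}$ plus a bounded source, so that each equation takes the form $Lw=\text{data}$ with $L\in\mathcal{G}(L_H)$, and then invokes the uniform invertibility of this class of operators, which rests on the Fredholm alternative and the weak maximum principle rather than on coercivity. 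With the bounds obtained this way, the remainder of your argument goes through.
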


{Theorem~\ref{theorem-convergence-moreau-yosida} implies that limit points, in the sense of~\eqref{m-conv-2-moreau-yosida}, of weak solutions of the regularized MFG PDE are weak solutions of the MFG PDI~\eqref{mfg-pdi-sys}. 
	This makes precise the sense in which MFG PDI generalize the well-known MFG PDE when relaxing the differentiability condition on the Hamiltonian.
	Another immediate implication of Theorem~\ref{theorem-convergence-moreau-yosida} is that, given an arbitrary neighbourhood (e.g. in the strong topology on $H^1_0(\Omega)\times\mathcal{X}$) of the set of all weak solutions of the MFG PDI~\eqref{mfg-pdi-sys}, then for all $\lambda$ sufficiently small, all weak solutions of the regularized problem~\eqref{eq:regularized_MFG} are contained in the given neighbourhood. This is easily shown by supposing the claim to be false, i.e.\ there would exist a sequence of solutions ${(u_{\lambda_j},m_{\lambda_j})_{j\in \N}}$ of the regularized problems, with $\lambda_j\to 0$ as $j\to \infty$, that are not contained in the neighbourhood, which contradicts the existence of a subsequence that converges to a solution of~\eqref{mfg-pdi-sys} as shown by Theorem~\ref{theorem-convergence-moreau-yosida}.}
%
\begin{remark}[Nonuniqueness of solutions and convergence of subsequences]
{We emphasize that Theorem~\ref{theorem-convergence-moreau-yosida} does not require the assumptions~\ref{H:G-postive} and \ref{H:F-strict-mono}.
	Therefore, in general, the solutions of the original and regularized problems are not necessarily unique, c.f.\ Example~\ref{ex:nonuniqueness} above.
	This is why convergence of the solutions of regularized problems is only shown up to subsequences.}
When solutions of the MFG PDI are nonunique, it is possible that different subsequences of solutions of the regularized problems may converge to different solutions of the MFG PDI, see the example of~Section~\ref{sec-eg-3} below.
\end{remark}

\begin{remark}[Convergence of entire sequence for uniquely solvable MFG PDI]
	{If~\eqref{mfg-pdi-sys} has a unique weak solution, then the convergence in~Theorem~\ref{theorem-convergence-moreau-yosida} holds for the entire sequence.
		As shown in Theorem~\ref{theorem-existence-uniqueness-mfg-pdi}, this includes the case where $G$ satisfies~\ref{H:G-postive} and $F$ satisfies~\ref{H:F-strict-mono}.}
\end{remark}

\begin{remark}[Nonconvergence in the $H^1$-norm for density function approximations]
	The convergence in~\eqref{m-conv-2-moreau-yosida} only states weak convergence in $H^1_0$ of $m_{\lambda_j}$ to $m$.
	This is sharp in general, since it is not always possible to have strong convergence of the density function approximations in the $H^1$-norm. We give an explicit example in~Section~\ref{sec-eg-4} to show this.
	{However, if one assumes some precompactness of the sequence $\left\{\frac{\partial H_{\lambda_j}}{\partial p}[\nabla u_{\lambda_j}]\right\}_{j\in\mathbb{N}}$, then we can recover strong convergence of the gradients of the densities, see Corollary~\ref{corollary-strong-H1-conv-density-1} below.}
\end{remark}

\begin{corollary}[Strong $H_0^1$-pre-compactness for density approximations]\label{corollary-strong-H1-conv-density-1}
	In addition to the hypotheses of Theorem~\ref{theorem-convergence-moreau-yosida}, suppose that  {$\left\{\frac{\partial H_{\lambda_j}}{\partial p}[\nabla u_{\lambda_j}]\right\}_{j\in\mathbb{N}}$ is precompact} in $L^1(\Omega;\mathbb{R}^d)$.
	Then, $\{m_{\lambda_j}\}_{j\in\mathbb{N}}$ is precompact in $H_0^1(\Omega)$.
\end{corollary}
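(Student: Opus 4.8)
The plan is to exploit the fact that $H^1_0(\Omega)$, being a Hilbert space, has the Radon--Riesz property: a weakly convergent sequence converges strongly as soon as the norms converge. Since Theorem~\ref{theorem-convergence-moreau-yosida} already supplies weak $H^1_0$-convergence along a subsequence, and since precompactness of $\{m_{\lambda_j}\}$ in $H^1_0(\Omega)$ simply means that every subsequence has a further subsequence converging in $H^1_0(\Omega)$, I would fix an arbitrary subsequence and, applying Theorem~\ref{theorem-convergence-moreau-yosida} to this subsequence (and using that $\{\frac{\partial H_{\lambda_j}}{\partial p}[\nabla u_{\lambda_j}]\}_{j}$ stays precompact in $L^1(\Omega;\R^d)$ along it), pass without relabelling to a further subsequence along which $u_{\lambda_j}\to u$ in $H^1_0(\Omega)$, $m_{\lambda_j}\rightharpoonup m$ in $H^1_0(\Omega)$, $m_{\lambda_j}\to m$ in $\mathcal{X}$ and in $L^2(\Omega)$, and $b_j\coloneqq\frac{\partial H_{\lambda_j}}{\partial p}[\nabla u_{\lambda_j}]\to\tilde b$ in $L^1(\Omega;\R^d)$ for some field $\tilde b$, with $(u,m)$ a weak solution of~\eqref{mfg-pdi-sys}. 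By~\eqref{H-bounds:lipschitz-lam} we have $\norm{b_j}_{L^\infty(\Omega;\R^d)}\le L_H$ uniformly in $j$, which also bounds $\tilde b$ in $L^\infty$ and, by interpolation with the $L^1$-convergence, upgrades $b_j\to\tilde b$ to convergence in $L^r(\Omega;\R^d)$ for every finite $r$.

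The crucial intermediate claim is that $m_{\lambda_j}b_j\to m\tilde b$ \emph{strongly} in $L^2(\Omega;\R^d)$. This follows by writing $m_{\lambda_j}b_j-m\tilde b = (m_{\lambda_j}-m)b_j + m(b_j-\tilde b)$: the first term is bounded in $L^2$-norm by $L_H\norm{m_{\lambda_j}-m}_{L^2(\Omega)}\to 0$, and the second tends to $0$ either by dominated convergence (after extracting an a.e.-convergent subsequence of $b_j$, dominated by $2L_H\lvert m\rvert\in L^2(\Omega)$) or directly by Hölder's inequality using $m\in L^{2^*}(\Omega)$ and $b_j\to\tilde b$ in $L^d(\Omega;\R^d)$. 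Pairing this strong limit against the weak limit $\nabla m_{\lambda_j}\rightharpoonup\nabla m$ in $L^2(\Omega;\R^d)$ and passing to the limit in~\eqref{weakform2-space-time-moreau-yosida} for fixed test functions, I obtain that $(m,\tilde b)$ satisfies the weak Kolmogorov--Fokker--Planck equation $\int_\Omega\nu\nabla m\cdot\nabla\phi + m\,\tilde b\cdot\nabla\phi\,\mathrm{d}x = \langle G,\phi\rangle_{H^{-1}\times H^1_0}$ for all $\phi\in H^1_0(\Omega)$; note that this step does not require identifying $\tilde b$ with the transport field of the weak solution produced by Theorem~\ref{theorem-convergence-moreau-yosida}.

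To conclude I would test~\eqref{weakform2-space-time-moreau-yosida} with $\phi=m_{\lambda_j}$ and the limiting equation with $\phi=m$, obtaining
\[
\nu\norm{\nabla m_{\lambda_j}}_{L^2(\Omega;\R^d)}^2 = \langle G,m_{\lambda_j}\rangle_{H^{-1}\times H^1_0} - \int_\Omega m_{\lambda_j}\,b_j\cdot\nabla m_{\lambda_j}\,\mathrm{d}x,
\]
together with the analogous identity $\nu\norm{\nabla m}_{L^2(\Omega;\R^d)}^2 = \langle G,m\rangle_{H^{-1}\times H^1_0} - \int_\Omega m\,\tilde b\cdot\nabla m\,\mathrm{d}x$. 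The right-hand side of the first identity converges to that of the second, using $m_{\lambda_j}\rightharpoonup m$ in $H^1_0(\Omega)$ with $G\in H^{-1}(\Omega)$ for the duality term and the strong-times-weak convergence established above for the integral term. Hence $\norm{\nabla m_{\lambda_j}}_{L^2(\Omega;\R^d)}\to\norm{\nabla m}_{L^2(\Omega;\R^d)}$, which combined with $\nabla m_{\lambda_j}\rightharpoonup\nabla m$ in $L^2(\Omega;\R^d)$ gives strong convergence of the gradients, and then $m_{\lambda_j}\to m$ in $H^1_0(\Omega)$ by the Poincaré inequality. As the initial subsequence was arbitrary, $\{m_{\lambda_j}\}_{j\in\N}$ is precompact in $H^1_0(\Omega)$.

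The main obstacle is the strong $L^2$-convergence of the product $m_{\lambda_j}b_j$ in the second paragraph: this is the only place where the $L^1$-precompactness hypothesis is genuinely used, it relies essentially on the uniform $L^\infty$-bound on $\frac{\partial H_{\lambda_j}}{\partial p}$, and it cannot be weakened to mere weak convergence, since the product must be paired against the only weakly convergent gradients $\nabla m_{\lambda_j}$. Verifying the compatibility of the successive subsequence extractions and the Sobolev-embedding bookkeeping behind the Hölder estimate are routine.
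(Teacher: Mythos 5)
Your argument is correct and follows essentially the same route as the paper: after extracting a further subsequence along which $b_j\to\tilde b$ in $L^1$ (hence in every $L^s$, $s<\infty$, by the uniform $L^\infty$-bound), one shows $m_{\lambda_j}b_j\to m\tilde b$ strongly in $L^2$, tests the regularized KFP equation with $m_{\lambda_j}$ to get convergence of the gradient norms, and concludes by Radon--Riesz; the paper performs exactly this computation, using the Hölder split with $m\in L^r$, $r\in(2,2^*)$, and $s=2r/(r-2)$ finite. The only caveat is that your Hölder variant as literally stated ($m\in L^{2^*}$ with $b_j\to\tilde b$ in $L^d$) does not cover $d=2$, where $2^*=\infty$ is not attained by the Sobolev embedding, but your dominated-convergence alternative (or the paper's choice of finite $r<2^*$) handles all dimensions.
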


The proofs of Theorem~\ref{theorem-convergence-moreau-yosida} and Corollary~\ref{corollary-strong-H1-conv-density-1} are given in Section~\ref{sec-pfs-of-main-result-1}.

\subsection{Rates of convergence}
In the event that $F$ is strongly monotone, i.e.\ satisfies~\ref{H:F-strong-mono-2}, and $G$ satisfies~\ref{H:G-postive}, we obtain a rate of convergence for the weak solutions of the regularized problem~\eqref{eq:regularized_MFG} to the {unique} weak solution of the MFG PDI~\eqref{mfg-pdi-sys}. The rate of convergence that we derive in this setting is independent of the regularity of the weak solution {of} the MFG PDI~\eqref{weakform}.  
\begin{theorem}[Rate of Convergence of solutions to regularized problems]\label{theorem-rate-of-convergence-moreau-yosida}
	{Assume the hypotheses~\ref{H:G-postive},~\ref{H:F-strong-mono-2},~\ref{H:F-Lipschitz-continuous}, and~\ref{H:reg-family}.
		Let $(u,m)$ and $(u_{\lambda},m_{\lambda})$, $\lambda\in(0,1]$, be the respective unique solutions of~\eqref{weakform} and~\eqref{weakform-moreau-yosida}.
		Then
		\begin{equation}\label{rates-of-convergence-eqns-density}
			\norm{u-u_{\lambda}}_{H^1(\Omega)}+\norm{m-m_{\lambda}}_{\mathcal{X}}\lesssim \omega(\lambda)^{\frac{1}{2}},
		\end{equation}
		for all $\lambda$ sufficiently small, where the hidden constant depends only on $\Omega$, $\nu$, $\dim$, $L_H$, $L_F$, $\sup_{\sigma\in[0,1]}\omega(\sigma)$, and $c_F$.}
\end{theorem}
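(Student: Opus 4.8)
The plan is to combine a Lasry--Lions-type monotonicity argument for the densities with a linear elliptic stability estimate for the value functions. First, since~\ref{H:F-strong-mono-2} implies strict monotonicity, Theorem~\ref{theorem-existence-uniqueness-mfg-pdi} ensures that the solutions in the statement are indeed unique, so the claim is meaningful. I would then record the a priori bounds to be used: by (the proof of) Theorem~\ref{theorem-convergence-moreau-yosida}, the solutions $u,m,u_\lambda,m_\lambda$ are bounded in $H^1_0(\Omega)$, hence in $L^1(\Omega)$, uniformly in $\lambda$; and since $G$ satisfies~\ref{H:G-postive}, the maximum principle for the Fokker--Planck operator (cf.~\cite{osborne2022analysis}) yields $m\ge 0$ and $m_\lambda\ge 0$ a.e.\ in $\Omega$.

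For the density estimate, I would take $\psi=m-m_\lambda$ in the difference of the HJB equations~\eqref{weakform1} and~\eqref{weakform1-space-time-moreau-yosida}, take $\phi=u-u_\lambda$ in the difference of the KFP equations~\eqref{weakform2} and~\eqref{weakform2-space-time-moreau-yosida}, and subtract; the symmetric diffusion terms cancel, leaving
\begin{multline*}
	\langle F[m]-F[m_\lambda],m-m_\lambda\rangle_{H^{-1}\times H_0^1}
	=\int_\Omega \big(H[\nabla u]-H_\lambda[\nabla u_\lambda]\big)(m-m_\lambda)\,\mathrm{d}x
	\\
	-\int_\Omega\Big(m\,\tilde{b}_*-m_\lambda\tfrac{\partial H_{\lambda}}{\partial p}[\nabla u_{\lambda}]\Big)\cdot\nabla(u-u_\lambda)\,\mathrm{d}x.
\end{multline*}
Regrouping the right-hand side according to $m$ and $m_\lambda$, the factor multiplying $m$ is $H[\nabla u]-H_\lambda[\nabla u_\lambda]-\tilde{b}_*\cdot\nabla(u-u_\lambda)$, which by the subgradient inequality for the convex function $H$ at $\nabla u$ (recall $\tilde{b}_*\in D_pH[u]$ from Definition~\ref{weakdef}) is bounded pointwise a.e.\ by $H[\nabla u_\lambda]-H_\lambda[\nabla u_\lambda]\le\omega(\lambda)$; similarly, since $\frac{\partial H_{\lambda}}{\partial p}[\nabla u_\lambda]$ is the subgradient of the convex function $H_\lambda$ at $\nabla u_\lambda$, the factor multiplying $m_\lambda$ is bounded pointwise a.e.\ by $H_\lambda[\nabla u]-H[\nabla u]\le\omega(\lambda)$. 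Since $m,m_\lambda\ge 0$, the right-hand side is therefore at most $\omega(\lambda)\big(\|m\|_{L^1(\Omega)}+\|m_\lambda\|_{L^1(\Omega)}\big)\lesssim\omega(\lambda)$, while by~\ref{H:F-strong-mono-2} the left-hand side is at least $c_F\|m-m_\lambda\|_{\mathcal{X}}^2$. Hence $\|m-m_\lambda\|_{\mathcal{X}}\lesssim\omega(\lambda)^{1/2}$.

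For the value-function estimate, I would subtract the two HJB equations and write $H[\nabla u]-H[\nabla u_\lambda]=a\cdot\nabla(u-u_\lambda)$ for a measurable vector field $a$ with $|a|\le L_H$ a.e.\ (which exists by~\eqref{bounds:lipschitz}), so that $u-u_\lambda\in H^1_0(\Omega)$ solves the linear elliptic equation
\begin{equation*}
	-\nu\Delta(u-u_\lambda)+a\cdot\nabla(u-u_\lambda)=\big(F[m]-F[m_\lambda]\big)-\big(H[\nabla u_\lambda]-H_\lambda[\nabla u_\lambda]\big),\qquad u-u_\lambda\in H^1_0(\Omega).
\end{equation*}
The standard $H^1_0$-stability estimate for uniformly elliptic equations in divergence form with bounded first-order coefficient and no zeroth-order term (whose stability constant depends only on $\nu$, $L_H$, $\Omega$, $\dim$), together with $\|F[m]-F[m_\lambda]\|_{H^{-1}(\Omega)}\le L_F\|m-m_\lambda\|_{\mathcal{X}}$ from~\ref{H:F-Lipschitz-continuous} and $\|H[\nabla u_\lambda]-H_\lambda[\nabla u_\lambda]\|_{L^2(\Omega)}\le|\Omega|^{1/2}\omega(\lambda)$ from~\ref{H:reg-family}, then gives $\|u-u_\lambda\|_{H^1(\Omega)}\lesssim\|m-m_\lambda\|_{\mathcal{X}}+\omega(\lambda)\lesssim\omega(\lambda)^{1/2}$ for $\lambda$ small, where in the last step I bound $\omega(\lambda)\le(\sup_{\sigma\in[0,1]}\omega(\sigma))^{1/2}\omega(\lambda)^{1/2}$. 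Adding the two estimates yields~\eqref{rates-of-convergence-eqns-density}.

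The main obstacle is the value-function estimate. Testing the difference of the HJB equations directly with $u-u_\lambda$ produces the term $\int_\Omega(H[\nabla u]-H[\nabla u_\lambda])(u-u_\lambda)\,\mathrm{d}x$, which is not sign-definite and cannot be absorbed into the diffusion term when $L_H$ is large, so a naive energy argument fails. The way around this is to regard $u-u_\lambda$ as the solution of a genuinely linear elliptic problem whose only lower-order term is of first order, for which $H^1_0$-solvability and stability hold with a data-dependent constant through the maximum principle (Fredholm theory) rather than through coercivity; the same circle of ideas underlies the uniform $L^1$-bounds and the nonnegativity of the densities invoked above, and is essentially already present in~\cite{osborne2022analysis}. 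A secondary point requiring care is the sign bookkeeping in the monotonicity step, so that after regrouping, both integrand factors are controlled from above by $+\omega(\lambda)$ and the nonnegativity of $m$ and $m_\lambda$ may be applied.
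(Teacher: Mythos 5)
Your proposal is correct, and the density estimate is essentially identical to the paper's argument (Lemma~\ref{lemma-m-lam-approx-rate}): the same cross-testing of the HJB and KFP differences, the same regrouping by $m$ and $m_\lambda$, the same two subgradient inequalities (for $\tilde{b}_*\in D_pH[u]$ and for $\tfrac{\partial H_\lambda}{\partial p}[\nabla u_\lambda]$) to bound both integrand factors by $\omega(\lambda)$, and the same use of \ref{H:G-postive} via the comparison principle to get $m,m_\lambda\geq 0$ before invoking \ref{H:F-strong-mono-2}. Where you genuinely depart from the paper is in the value-function estimate. The paper (Lemma~\ref{lemma-u-lam-approx-rate}) selects a measurable maximizer $\alpha_\lambda\in\Lambda[u_\lambda]$, forms the operator $L_\lambda\in\mathcal{G}(L_H)$ with drift $b(\cdot,\alpha_\lambda)$, and controls the leftover linearization residual $H[\nabla u_\lambda]-H[\nabla u]-b(\cdot,\alpha_\lambda)\cdot\nabla(u_\lambda-u)$ via the semismoothness result of Lemma~\ref{lemma-semismooth-tech-result}, absorbing it by a factor $\tfrac{1}{2C_*}\|u-u_\lambda\|_{H^1(\Omega)}$; this is what forces the restriction to $\lambda$ sufficiently small. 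You instead write $H[\nabla u]-H[\nabla u_\lambda]=a\cdot\nabla(u-u_\lambda)$ exactly, with $a$ the measurable field $a=\frac{H[\nabla u]-H[\nabla u_\lambda]}{|\nabla(u-u_\lambda)|^2}\nabla(u-u_\lambda)$ (and $a=0$ where the gradients coincide), which satisfies $|a|\leq L_H$ a.e.\ by \eqref{bounds:lipschitz} — you should state this formula explicitly, since it is the only point of your reduction not spelled out — and then apply the uniform invertibility of $\mathcal{G}(L_H)$ from Lemma~\ref{lemma-uniform-L-inv-bound} directly, with no residual to absorb. Your route is more elementary (it bypasses Lemma~\ref{lemma-semismooth-tech-result} and the strong $H^1$-convergence of $u_\lambda$ from Theorem~\ref{theorem-convergence-moreau-yosida} entirely) and in fact yields the bound $\|u-u_\lambda\|_{H^1(\Omega)}\lesssim\|m-m_\lambda\|_{\mathcal{X}}+\omega(\lambda)$ for every $\lambda\in(0,1]$ rather than only for $\lambda$ small; what the paper's semismoothness machinery buys is a sharper, control-theoretic linearization that is reusable elsewhere (e.g.\ for Newton-type solvers), but it is not needed for this theorem. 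Your correctly identified, and correctly resolved, the key obstacle: the first-order term cannot be absorbed by coercivity, so one must pass through the maximum-principle/Fredholm-based stability of Lemma~\ref{lemma-uniform-L-inv-bound}.
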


{The proof of Theorem~\ref{theorem-rate-of-convergence-moreau-yosida} is given in Section~\ref{sec-pfs-of-main-result-2}.
	We emphasize that there is no assumption of higher regularity of the solution $(u,m)$ in Theorem~\ref{theorem-rate-of-convergence-moreau-yosida}, which is important given that it is known from examples that~$(u,m)$ may have limited smoothness, c.f.~\cite[Section~3.3]{osborne2022analysis}.
	For instance, in the case of Moreau--Yosida regularization, c.f.~\eqref{H-moreau-yosida-reg-defn},  {or mollification-based regularization, c.f.~\eqref{H-mollification-approx-1},} Theorem~\ref{theorem-rate-of-convergence-moreau-yosida} implies a rate of convergence of order $\frac{1}{2}$ with respect to $\lambda$.
	The bound~\eqref{rates-of-convergence-eqns-density} {will} play an important role in {future work on} the design and analysis of numerical methods for solving the MFG PDI system, since it allows one to approximate the solution by that of a regularized problem, with some quantitative control on the error.}

\section{Preliminary results and proof of Theorem~\ref{theorem-existence-uniqueness-mfg-pdi}}\label{sec-analysis-prelim-results}

{We start by gathering some preparatory lemmas. The following Lemma is from~\cite[Lemma~4.3]{osborne2022analysis}, and shows that the set-valued map $D_pH$ has nonempty images in $L^\infty(\Omega;\R^\dim)$ that are uniformly bounded in the closed ball of radius $L_H$.}

\begin{lemma}[\cite{osborne2022analysis}]\label{lemma-DpH-non-empty}
	{The} set-valued map $D_pH:W^{1,1}(\Omega)\rightrightarrows L^{\infty}(\Omega;\mathbb{R}^d)$ possesses nonempty images and we have the bound
	\begin{equation}\label{Hsubdiff-bound}
		\sup_{v\in W^{1,1}(\Omega)}\sup_{\tilde{b}\in D_pH[v]}\|\tilde{b}\|_{L^{\infty}(\Omega;\mathbb{R}^d)}\leq L_H.
	\end{equation}
\end{lemma}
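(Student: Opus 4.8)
The plan is to dispatch the bound~\eqref{Hsubdiff-bound} first, since it is essentially immediate, and then to concentrate on the nonemptiness of the images, which is the only substantial point. For the bound, I would invoke the observation already recorded just after~\eqref{subdifferential}: the Lipschitz estimate~\eqref{bounds:lipschitz} forces $\partial_p H(x,p)$ to lie in the closed ball of radius $L_H$ centred at the origin, for every $(x,p)\in\Omega\times\R^\dim$. Hence, for any $v\in W^{1,1}(\Omega)$ and any measurable selection $\tilde b$ of $\partial_p H(\cdot,\nabla v)$, one has $\lvert\tilde b(x)\rvert\le L_H$ for a.e.\ $x\in\Omega$, so $\tilde b\in L^\infty(\Omega;\R^\dim)$ with $\|\tilde b\|_{L^\infty(\Omega;\R^\dim)}\le L_H$; taking the supremum over $\tilde b\in D_pH[v]$ and then over $v$ gives~\eqref{Hsubdiff-bound}. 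Note this part does not require $D_pH[v]$ to be nonempty.

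The core of the argument is to produce, for an arbitrary $v\in W^{1,1}(\Omega)$, at least one measurable selection of the set-valued map $\Phi\colon x\mapsto \partial_p H(x,\nabla v(x))$. I would verify the hypotheses of a measurable selection theorem (Kuratowski--Ryll-Nardzewski, or Aumann's theorem): the values $\Phi(x)$ are nonempty, closed and convex, since $\partial_p H(x,\cdot)$ is the subdifferential of a finite convex function. For measurability, the key step is that the set-valued map $(x,p)\mapsto\partial_p H(x,p)$ has closed graph in $\overline\Omega\times\R^\dim\times\R^\dim$: if $(x_n,p_n)\to(x,p)$ and $\tilde b_n\in\partial_p H(x_n,p_n)$ with $\tilde b_n\to\tilde b$, then passing to the limit in $H(x_n,q)\ge H(x_n,p_n)+\tilde b_n\cdot(q-p_n)$ for fixed $q$, using the continuity of $H$, gives $\tilde b\in\partial_p H(x,p)$. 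Since $\nabla v$ is Lebesgue measurable, $\mathrm{graph}(\Phi)$ is then the preimage of this closed (hence Borel) set under the measurable map $(x,b)\mapsto(x,\nabla v(x),b)$, so $\Phi$ has measurable graph; together with the nonempty closed values this yields a measurable selection $\tilde b_*$, which by the uniform bound above lies automatically in $L^\infty(\Omega;\R^\dim)$, so $\tilde b_*\in D_pH[v]$.

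An alternative route, closer to the variational structure of the problem, is to select directly from the maximizing sets: the map $(x,p,\alpha)\mapsto b(x,\alpha)\cdot p-f(x,\alpha)$ is continuous on $\overline\Omega\times\R^\dim\times\mathcal{A}$ with $\mathcal{A}$ compact metric, so by Berge's maximum theorem together with a measurable selection theorem (or Filippov's lemma) there is a measurable $\alpha_*\colon\Omega\to\mathcal{A}$ with $\alpha_*(x)\in\argmax_{\alpha\in\mathcal{A}}\{\,b(x,\alpha)\cdot\nabla v(x)-f(x,\alpha)\,\}$ for a.e.\ $x$; then $\tilde b_*(x)\coloneqq b(x,\alpha_*(x))$ is measurable, bounded by $\|b\|_{C(\overline\Omega\times\mathcal{A};\R^\dim)}=L_H$, and belongs to $\conv\{\,b(x,\alpha):\alpha\in\Lambda(x,\nabla v(x))\,\}=\partial_p H(x,\nabla v(x))$ by the convex-hull identity (c.f.~\eqref{intro-convv-formula}). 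In either approach the main obstacle is the measurable selection step, in particular checking the measurability of the relevant set-valued map, while the remaining verifications are routine. Since this is precisely the content of~\cite[Lemma~4.3]{osborne2022analysis}, I would carry out one of the above arguments briefly and refer the reader there for the complete details.
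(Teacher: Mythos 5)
Your proposal is correct. Note that the paper itself gives no proof of this lemma: it is imported verbatim from \cite[Lemma~4.3]{osborne2022analysis}, and the only ingredients the present paper records are exactly the ones you use, namely the containment of $\partial_p H(x,p)$ in the closed ball of radius $L_H$ (stated just after~\eqref{subdifferential}) and, in Section~6.1, the nonemptiness of the measurable selection set $\Lambda[v]$ via the Kuratowski--Ryll-Nardzewski theorem together with the convex-hull identity~\eqref{intro-convv-formula}. Your second route (select a measurable maximizer $\alpha_*\in\Lambda[v]$ and set $\tilde b_*(x)=b(x,\alpha_*(x))$) is precisely this machinery and matches the cited source; your first route (closed graph of $(x,p)\mapsto\partial_p H(x,p)$ plus Aumann-type selection for a closed-convex-valued map with measurable graph) is an equally valid alternative, with the minor caveat that one should invoke the measurable-graph version of the selection theorem on the complete Lebesgue $\sigma$-algebra rather than Kuratowski--Ryll-Nardzewski in its weak-measurability form. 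For the second route it is also worth observing that the full convex-hull identity is not needed: the elementary inclusion $b(x,\alpha^*)\in\partial_p H(x,p)$ for any maximizer $\alpha^*\in\Lambda(x,p)$ already suffices.
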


{The next Lemma is from~\cite[Lemma~4.4]{osborne2022analysis}, where it is shown that $D_pH$ has closed graph when $L^\infty(\Omega;\R^d)$ is equipped with its weak-$*$ topology.}

\begin{lemma}[\cite{osborne2022analysis}]\label{inclusion}
	{Let $H$ be the function given by~\eqref{Hamiltonian}. Suppose $\{v_j\}_{j\in\mathbb{N}}\subset H^1(\Omega)$, $\{\tilde{b}_{j}\}_{j\in\mathbb{N}}\subset L^{\infty}(\Omega;\mathbb{R}^d) $ are sequences such that $\tilde{b}_j\in D_pH[v_j]$ for all $j\in\mathbb{N}$. If $v_j\to v$ in $H^1(\Omega)$ and $\tilde{b}_j\rightharpoonup^* \tilde{b}$ in $L^{\infty}(\Omega;\mathbb{R}^d)$ as $j\to \infty$, then $\tilde{b}\in D_pH[v]$.} 
\end{lemma}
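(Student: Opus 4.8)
The plan is to reduce the weak-$*$ closed graph property to the pointwise subdifferential inequality~\eqref{subdifferential} and then pass to the limit, exploiting that strong convergence of $\nabla v_j$ in $L^2(\Omega;\R^\dim)$ pairs well with the weak-$*$ convergence of the selections $\tilde b_j$ (which in particular are uniformly bounded in $L^\infty$ by~\eqref{Hsubdiff-bound}). Since $\tilde b_j\in D_pH[v_j]$, for each $j$ we have, for a.e.\ $x\in\Omega$ and every $q\in\R^\dim$,
\[
H(x,q)\ge H(x,\nabla v_j(x))+\tilde b_j(x)\cdot\big(q-\nabla v_j(x)\big).
\]
The first move is to fix a countable dense set $Q\subset\R^\dim$ (e.g.\ $Q=\mathbb{Q}^\dim$) and to argue that it suffices to establish the limiting inequality for each fixed $q\in Q$: intersecting the countably many resulting full-measure sets and using the continuity of $p\mapsto H(x,p)$ then yields the inequality for all $q\in\R^\dim$ on a single full-measure set, which is precisely the assertion $\tilde b(x)\in\partial_p H(x,\nabla v(x))$ for a.e.\ $x$, i.e.\ $\tilde b\in D_pH[v]$.

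For a fixed $q\in Q$ and an arbitrary \emph{nonnegative} $\phi\in L^\infty(\Omega)$, I would multiply the displayed inequality by $\phi$ and integrate, obtaining
\[
\int_\Omega H(x,q)\,\phi\,\mathrm dx \;\ge\; \int_\Omega H(x,\nabla v_j)\,\phi\,\mathrm dx + \int_\Omega \tilde b_j\cdot\big(q-\nabla v_j\big)\,\phi\,\mathrm dx,
\]
and then pass to the limit $j\to\infty$ term by term. The term $\int_\Omega H(x,\nabla v_j)\phi\,\mathrm dx$ converges to $\int_\Omega H(x,\nabla v)\phi\,\mathrm dx$ because $v\mapsto H(\cdot,\nabla v)$ is Lipschitz from $H^1(\Omega)$ into $L^2(\Omega)$ (a consequence of~\eqref{bounds:lipschitz}) and $\phi\in L^\infty(\Omega)\subset L^2(\Omega)$. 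The term $\int_\Omega \tilde b_j\cdot q\,\phi\,\mathrm dx$ converges to $\int_\Omega \tilde b\cdot q\,\phi\,\mathrm dx$ since $q\phi\in L^1(\Omega;\R^\dim)$ and $\tilde b_j\rightharpoonup^*\tilde b$ in $L^\infty(\Omega;\R^\dim)$. For the product term I would write
\[
\int_\Omega\tilde b_j\cdot\nabla v_j\,\phi\,\mathrm dx = \int_\Omega\tilde b_j\cdot(\nabla v_j-\nabla v)\,\phi\,\mathrm dx + \int_\Omega(\tilde b_j-\tilde b)\cdot\nabla v\,\phi\,\mathrm dx + \int_\Omega\tilde b\cdot\nabla v\,\phi\,\mathrm dx,
\]
where the first integral is bounded in absolute value by $L_H\,\|\phi\|_{L^\infty}\,\|\nabla v_j-\nabla v\|_{L^1}\to 0$ using~\eqref{Hsubdiff-bound}, and the second tends to $0$ by weak-$*$ convergence since $\nabla v\,\phi\in L^1(\Omega;\R^\dim)$. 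This gives, in the limit,
\[
\int_\Omega H(x,q)\,\phi\,\mathrm dx \;\ge\; \int_\Omega\big[H(x,\nabla v)+\tilde b\cdot(q-\nabla v)\big]\phi\,\mathrm dx
\]
for every nonnegative $\phi\in L^\infty(\Omega)$, and hence $H(x,q)\ge H(x,\nabla v(x))+\tilde b(x)\cdot(q-\nabla v(x))$ for a.e.\ $x\in\Omega$. Combining over $q\in Q$ and invoking continuity in $p$ as above yields $\tilde b\in D_pH[v]$, the membership in $L^\infty(\Omega;\R^\dim)$ being automatic from~\eqref{Hsubdiff-bound} applied pointwise a.e.

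I do not anticipate a genuine obstacle: the only point requiring care is the interplay of the quantifier ``for all $q\in\R^\dim$'' with the $q$-dependent exceptional null sets, which the countable-dense-set reduction together with continuity of $H(x,\cdot)$ handles cleanly; the remaining work is a routine strong/weak-$*$ passage to the limit. It is worth noting that this passage uses $v_j\to v$ \emph{strongly} in $H^1(\Omega)$ in an essential way, since weak $H^1$-convergence would not suffice to pass to the limit in the nonlinear term $H(\cdot,\nabla v_j)$ nor in the product $\tilde b_j\cdot\nabla v_j$.
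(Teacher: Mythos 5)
Your proof is correct, and it follows essentially the same route as the paper's argument for this type of statement (compare the proof of Lemma~\ref{lemma-reg-inclusion}): form the nonnegative subdifferential defect, pass to the limit by pairing the strong $H^1$-convergence of $v_j$ with the weak-$*$ convergence of the uniformly bounded selections $\tilde b_j$, conclude that the limit is nonnegative a.e., and handle the quantifier over $q\in\R^\dim$ via a countable dense set and continuity of $H(x,\cdot)$. The only cosmetic difference is that you deduce nonnegativity of the limit by testing against nonnegative $\phi\in L^\infty(\Omega)$, whereas the paper invokes Mazur's theorem for the weak $L^2$ limit; these are equivalent.
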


{The following Lemma relates the partial derivatives of $H_\lambda$ with the subdifferential of $H$ when considering compositions with gradients of functions that form a convergent sequence in $H^1$.}
\begin{lemma}\label{lemma-reg-inclusion}
	Assume~\ref{H:reg-family} holds. Let $\{\lambda_j\}_{j\in\mathbb{N}}\subset (0,1]$ be a {sequence that converges to $0$ as $j\to \infty$,} and let $\{{v}_j\}_{j\in\mathbb{N}}\subset H^1(\Omega)$ be a sequence {of functions in $H^1(\Omega)$ that converges to a limit $v\in H^1(\Omega)$.} 
	{Then, there exists a subsequence of $\{\frac{\partial H_{\lambda_{j}}}{\partial p}[\nabla {v}_{j}]\}_{j\in\mathbb{N}}$ that is weakly-$*$ convergent in $L^\infty(\Omega;\R^d)$ to a limit $\tilde{b}_*\in D_pH[v]$.}
\end{lemma}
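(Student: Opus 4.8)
The plan is to exploit the uniform boundedness of the gradients $\frac{\partial H_{\lambda_j}}{\partial p}[\nabla v_j]$ to extract a weakly-$*$ convergent subsequence, and then to identify the limit as a measurable selection of $\partial_p H(\cdot,\nabla v)$ by a pointwise subdifferential inequality argument that passes to the limit, mimicking the structure of Lemma~\ref{inclusion}. First I would observe that by~\eqref{H-bounds:lipschitz-lam} each $H_{\lambda_j}$ is Lipschitz in $p$ with constant $L_H$, so $\lvert\frac{\partial H_{\lambda_j}}{\partial p}(x,p)\rvert\leq L_H$ for all $(x,p)\in\overline{\Omega}\times\R^d$; hence $\{\frac{\partial H_{\lambda_j}}{\partial p}[\nabla v_j]\}_{j\in\N}$ is bounded in $L^\infty(\Omega;\R^d)$. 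Since $L^\infty(\Omega;\R^d)$ is the dual of the separable space $L^1(\Omega;\R^d)$, the Banach--Alaoglu theorem yields a subsequence (not relabeled) and a limit $\tilde b_*\in L^\infty(\Omega;\R^d)$ with $\frac{\partial H_{\lambda_j}}{\partial p}[\nabla v_j]\rightharpoonup^*\tilde b_*$, and the same bound shows $\norm{\tilde b_*}_{L^\infty}\leq L_H$. It remains to prove $\tilde b_*(x)\in\partial_p H(x,\nabla v(x))$ for a.e.\ $x\in\Omega$.

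To identify the limit I would use the convexity of each $H_{\lambda_j}$ in $p$: the gradient inequality gives, for every fixed $q\in\R^d$,
\begin{equation*}
	H_{\lambda_j}(x,q)\geq H_{\lambda_j}(x,\nabla v_j(x)) + \frac{\partial H_{\lambda_j}}{\partial p}(x,\nabla v_j(x))\cdot\bigl(q-\nabla v_j(x)\bigr)\quad\text{for a.e.\ }x\in\Omega.
\end{equation*}
Testing this against an arbitrary nonnegative $\phi\in L^1(\Omega)$ (or more simply against $\phi\in C_0^\infty(\Omega)$, $\phi\geq0$, and then extending by density) and integrating, I would pass $j\to\infty$ term by term: the left side converges using $H_{\lambda_j}\to H$ uniformly (by~\eqref{H-bounds:reg-unif-approx_1}) together with $\nabla v_j\to\nabla v$ in $L^2$; for the right side, $H_{\lambda_j}(x,\nabla v_j(x))\to H(x,\nabla v(x))$ in $L^2(\Omega)$ by the uniform convergence combined with the uniform Lipschitz bound~\eqref{H-bounds:lipschitz-lam} and $\nabla v_j\to\nabla v$; the bilinear term $\int_\Omega \frac{\partial H_{\lambda_j}}{\partial p}[\nabla v_j]\cdot(q-\nabla v_j)\,\phi\,\mathrm{d}x$ is handled by splitting into $\int\frac{\partial H_{\lambda_j}}{\partial p}[\nabla v_j]\cdot q\,\phi$, which converges by weak-$*$ convergence since $q\phi\in L^1$, and $\int\frac{\partial H_{\lambda_j}}{\partial p}[\nabla v_j]\cdot\nabla v_j\,\phi$, where one writes $\nabla v_j=(\nabla v_j-\nabla v)+\nabla v$, the first part going to zero by the $L^\infty$ bound on the gradients times $\nabla v_j\to\nabla v$ in $L^2$ (and $\phi$ bounded, if chosen in $C_0^\infty$), and the second part converging by weak-$*$ convergence against $\nabla v\,\phi\in L^1$. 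This yields $\int_\Omega\bigl(H(x,q)-H(x,\nabla v(x))-\tilde b_*(x)\cdot(q-\nabla v(x))\bigr)\phi(x)\,\mathrm{d}x\geq0$ for all nonnegative $\phi$, hence the integrand is nonnegative a.e.; taking $q$ over a countable dense subset of $\R^d$ and using continuity of $H(x,\cdot)$ gives $\tilde b_*(x)\in\partial_p H(x,\nabla v(x))$ for a.e.\ $x$, i.e.\ $\tilde b_*\in D_pH[v]$.

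The main obstacle is the convergence of the product term $\int_\Omega\frac{\partial H_{\lambda_j}}{\partial p}[\nabla v_j]\cdot\nabla v_j\,\phi\,\mathrm{d}x$, since it is a product of a sequence converging only weakly-$*$ with a sequence converging only strongly in $L^2$ — neither convergence alone suffices, and one genuinely needs the combination (uniform $L^\infty$ bound on the gradients absorbing the $L^2$-strong convergence of $\nabla v_j-\nabla v$, plus weak-$*$ convergence against the fixed $L^1$ function $\nabla v\,\phi$). A secondary, purely technical point is ensuring $H_{\lambda_j}(\cdot,\nabla v_j)\to H(\cdot,\nabla v)$ in $L^2(\Omega)$: one writes $H_{\lambda_j}(\cdot,\nabla v_j)-H(\cdot,\nabla v)=\bigl(H_{\lambda_j}(\cdot,\nabla v_j)-H(\cdot,\nabla v_j)\bigr)+\bigl(H(\cdot,\nabla v_j)-H(\cdot,\nabla v)\bigr)$ and bounds the first summand by $\omega(\lambda_j)\to0$ uniformly and the second by $L_H\norm{\nabla v_j-\nabla v}_{L^2}\to0$ using~\eqref{bounds:lipschitz}. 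Finally, one should note that the whole argument requires extracting the subsequence only once, at the Banach--Alaoglu step, since the limit identification then applies to that fixed subsequence.
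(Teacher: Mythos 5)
Your proposal is correct and follows essentially the same route as the paper's proof: a uniform $L^\infty$ bound from the Lipschitz constant, Banach--Alaoglu in $L^\infty=(L^1)^*$, and passage to the limit in the pointwise convexity inequality, with the product term handled by exactly the splitting $\nabla v_j=(\nabla v_j-\nabla v)+\nabla v$ combined with the uniform gradient bound. The only cosmetic difference is that you conclude nonnegativity of the limit by testing against nonnegative test functions, whereas the paper phrases the same step as weak $L^2$ convergence of the nonnegative functions $\omega_{j,q}$ followed by Mazur's theorem.
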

\begin{proof}
	{For each $q\in \R^\dim$, and each $j\in \N$, we define the} function $\omega_{j,q}:\Omega\to\mathbb{R}$ given by
	\begin{equation}
		\begin{aligned}
			\omega_{j,q}(x)\coloneqq H_{\lambda_j}(x,q)- H_{\lambda_j}(x,\nabla v_j(x))-\frac{\partial H_{\lambda_j}}{\partial p}(x,\nabla v_j(x))\cdot(q-\nabla v_j(x)) , \quad x \in \Omega,
		\end{aligned}
	\end{equation}
	By hypothesis~\ref{H:reg-family}, for each $j\in\mathbb{N}$ and $x\in\overline{\Omega}$ the function $\mathbb{R}^d\ni p\mapsto H_{\lambda_j}(x,p)$ is convex, and the partial derivative $\frac{\partial H_{\lambda_j}}{\partial p}:\Omega\times\mathbb{R}^d\to\mathbb{R}^d$ exists and is continuous.
	{Therefore, the function} $\omega_{j,q}\geq 0$ a.e.\ in $\Omega$ for all $j\in\mathbb{N}$.
	Moreover, since $H_{\lambda_j}$ satisfies the Lipschitz condition~\eqref{H-bounds:lipschitz-lam}, we have that $\frac{\partial H_{\lambda_j}}{\partial p}(\cdot,\nabla v_j)\in L^{\infty}(\Omega;\mathbb{R}^d)$ with $\lVert \frac{\partial H_{\lambda_j}}{\partial p}(\cdot,\nabla v_j) \rVert_{L^{\infty}(\Omega;\mathbb{R}^d)}\leq L_H$.
	It then follows from the definition of $\omega_{j,q}$ that $\omega_{j,q}\in L^2(\Omega)$ for all $j\in\mathbb{N}$. Since $L^1(\Omega;\mathbb{R}^d)$ is separable, the closed ball in $L^{\infty}(\Omega;\mathbb{R}^d)$ is weak-$*$ sequentially compact. Therefore, we may pass to a subsequence (without change of notation) such that $\frac{\partial H_{\lambda_j}}{\partial p}(\cdot,\nabla v_j)\rightharpoonup^*\tilde{b}_*$ in $L^{\infty}(\Omega;\mathbb{R}^d)$ as $j\to\infty$ for some $\tilde{b}_*\in L^{\infty}(\Omega;\mathbb{R}^d)$.
	The bound~\eqref{H-bounds:reg-unif-approx_1} in~\ref{H:reg-family}, together with the strong convergence of $\{v_j\}_{j\in\mathbb{N}}$ to $v$ in $H^1(\Omega)$, allow us to deduce that $\omega_{j,q}$ converges weakly in $L^2(\Omega)$ to the function $\omega_{q}\in L^2(\Omega)$ that is defined by
	\begin{equation}
		\omega_{q}(x)\coloneqq H(x,q)- H(x,\nabla v(x))-\tilde{b}_*(x)\cdot(q-\nabla v(x)), \quad x\in \Omega.
	\end{equation}
	{Mazur's theorem then implies that $\omega_q\geq 0$ a.e.\ in $\Omega$, for each $q\in \R^\dim$, since $\omega_{q,j}\rightharpoonup \omega_q$ in $L^2(\Omega)$ as $j\tends \infty$, with each $\omega_{q,j}$ nonnegative a.e.\ in $\Omega$.}
	Since $q\in\mathbb{R}^d$ was arbitrary and since $\mathbb{R}^d$ is separable, after possibly excising a set of measure zero, we conclude that, for a.e.\ $x\in\Omega$, 
	\begin{equation} 
		H(x,q)- H(x,\nabla v(x))-\tilde{b}_*(x)\cdot(q-\nabla v(x))\geq 0\quad\forall q\in\mathbb{R}^d.
	\end{equation}
	{This implies that $\tilde{b}_*\in D_pH[v]$ and concludes the proof.}
\end{proof}

Let $\mathcal{G}(L_H)$ denote the set of all operators $L:H_0^1(\Omega)\to H^{-1}(\Omega)$ of the form
\begin{equation}\label{eq:wmp_1}
	\langle Lu,v\rangle_{H^{-1}\times H_0^1}=\int_{\Omega}\nu\nabla u\cdot\nabla v+\tilde{b}\cdot\nabla u v\,\mathrm{d}x, 
\end{equation} 
where $\tilde{b}:\Omega\to\mathbb{R}^d$ is some vector field satisfying $\lVert \tilde{b}\rVert_{L^{\infty}(\Omega;\mathbb{R}^d)}\leq L_H.$
In addition, given an operator $L\in \mathcal{G}(L_H)$, we define $L^*:H_0^1(\Omega)\to H^{-1}(\Omega)$, the formal adjoint of $L$, by $\langle L^*w,v\rangle_{H^{-1}\times H_0^1} \coloneqq \langle Lv, w\rangle_{H^{-1}\times H_0^1}$ for all $ w,v\in H_0^1(\Omega).$ 
In the analysis we will use the following uniform invertibility result which is an application of \cite[Lemma 4.5]{osborne2022analysis}. 
\begin{lemma}\label{lemma-uniform-L-inv-bound}
	For every operator $L\in\mathcal{G}(L_H)$, both $L$ and $L^*$ are boundedly invertible as mappings from $H^1_0(\Omega)$ to $H^{-1}(\Omega)$, and there exists a constant $C_1> 0$ depending on only $\Omega$, $d$, $\nu$, and  $L_H$ such that 
	\begin{equation}\label{eq:invertibility-continuous} 
		\sup_{L\in\mathcal{G}(L_H)}\max\left\{\left\|L^{-1}\right\|_{\mathcal{L}\left(H^{-1}(\Omega),H_0^1(\Omega)\right)},\left\|{L^*}^{-1}\right\|_{\mathcal{L}\left(H^{-1}(\Omega),H_0^1(\Omega)\right)}\right\}\leq C_1.
	\end{equation}
\end{lemma}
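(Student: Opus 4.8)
The plan is to handle each fixed $L\in\mathcal{G}(L_H)$ by a Fredholm argument, and then to extract the \emph{uniformity} of the bound from a compactness-and-contradiction argument over the whole family $\mathcal{G}(L_H)$. For fixed $L$ with drift field $\tilde b$, I would write $L=L_0+B$, where $L_0\coloneqq-\nu\Delta\colon H_0^1(\Omega)\to H^{-1}(\Omega)$ is the Riesz isomorphism and $Bu\coloneqq\tilde b\cdot\nabla u$. Since $\norm{Bu}_{L^2(\Omega)}\le L_H\norm{\nabla u}_{L^2(\Omega)}$, the map $B$ is bounded from $H_0^1(\Omega)$ into $L^2(\Omega)$, and since $L^2(\Omega)$ is compactly embedded in $H^{-1}(\Omega)$ — the adjoint of the compact Rellich embedding $H_0^1(\Omega)\hookrightarrow L^2(\Omega)$ — the operator $B$ is compact from $H_0^1(\Omega)$ into $H^{-1}(\Omega)$. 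Hence $L=L_0(I+L_0^{-1}B)$ with $L_0^{-1}B$ compact on $H_0^1(\Omega)$, so $L$ is Fredholm of index zero; combined with injectivity — which follows from the weak maximum principle, as $-\nu\Delta+\tilde b\cdot\nabla$ has no zeroth-order term and homogeneous boundary data, and is also part of what is established in \cite[Lemma~4.5]{osborne2022analysis} — this gives that $L$ is a Banach-space isomorphism, and hence so is $L^*$, with $\norm{(L^*)^{-1}}_{\mathcal{L}(H^{-1}(\Omega),H_0^1(\Omega))}=\norm{L^{-1}}_{\mathcal{L}(H^{-1}(\Omega),H_0^1(\Omega))}$ by duality. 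It therefore remains only to bound $\norm{L^{-1}}$ uniformly over $L\in\mathcal{G}(L_H)$.

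For the uniform bound I would argue by contradiction: assume $\sup_{L\in\mathcal{G}(L_H)}\norm{L^{-1}}_{\mathcal{L}(H^{-1}(\Omega),H_0^1(\Omega))}=\infty$, so there are drifts $\tilde b_j$ with $\norm{\tilde b_j}_{L^\infty(\Omega;\R^d)}\le L_H$, operators $L_j\in\mathcal{G}(L_H)$, and $u_j\in H_0^1(\Omega)$ with $\norm{u_j}_{H_0^1(\Omega)}=1$ and $f_j\coloneqq L_j u_j$ satisfying $\norm{f_j}_{H^{-1}(\Omega)}\tends 0$. The decisive step is a compactness improvement: rearranging $L_j u_j=f_j$ gives $u_j=L_0^{-1}f_j-L_0^{-1}(\tilde b_j\cdot\nabla u_j)$; since $\{\tilde b_j\cdot\nabla u_j\}_j$ is bounded in $L^2(\Omega)$ and $L_0^{-1}\colon L^2(\Omega)\to H_0^1(\Omega)$ is compact (it is $L_0^{-1}$ precomposed with the compact inclusion $L^2(\Omega)\hookrightarrow H^{-1}(\Omega)$), the sequence $\{L_0^{-1}(\tilde b_j\cdot\nabla u_j)\}_j$ is precompact in $H_0^1(\Omega)$, while $L_0^{-1}f_j\tends 0$ in $H_0^1(\Omega)$. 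Hence $\{u_j\}_j$ is precompact in $H_0^1(\Omega)$, and after passing to a subsequence we obtain $u_j\tends u$ strongly in $H_0^1(\Omega)$ with $\norm{u}_{H_0^1(\Omega)}=1$, and $\tilde b_j\rightharpoonup^*\tilde b$ in $L^\infty(\Omega;\R^d)$ with $\norm{\tilde b}_{L^\infty(\Omega;\R^d)}\le L_H$.

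It then remains to pass to the limit in the weak formulation. For fixed $v\in H_0^1(\Omega)$ we have $\int_\Omega\nu\nabla u_j\cdot\nabla v+(\tilde b_j\cdot\nabla u_j)\,v\,\mathrm{d}x=\langle f_j,v\rangle_{H^{-1}\times H_0^1}$; the diffusion term converges to $\int_\Omega\nu\nabla u\cdot\nabla v\,\mathrm{d}x$ by strong convergence of the gradients, and the right-hand side tends to $0$. For the drift term, $v\,\nabla u_j\tends v\,\nabla u$ strongly in $L^1(\Omega;\R^d)$, being the product of an $L^2$-strongly convergent sequence with a fixed $L^2$ function, while $\tilde b_j\rightharpoonup^*\tilde b$ in $L^\infty(\Omega;\R^d)=L^1(\Omega;\R^d)^*$, and the pairing of a weakly-$*$ convergent sequence against a strongly convergent one converges; hence $\int_\Omega(\tilde b_j\cdot\nabla u_j)\,v\,\mathrm{d}x\tends\int_\Omega(\tilde b\cdot\nabla u)\,v\,\mathrm{d}x$. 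In the limit $u$ solves $\int_\Omega\nu\nabla u\cdot\nabla v+(\tilde b\cdot\nabla u)\,v\,\mathrm{d}x=0$ for all $v\in H_0^1(\Omega)$; since the operator with drift $\tilde b$ belongs to $\mathcal{G}(L_H)$ and is injective by the first step, $u=0$, contradicting $\norm{u}_{H_0^1(\Omega)}=1$. This yields the uniform constant $C_1$, depending only on $\Omega$, $d$, $\nu$ and $L_H$ through the Poincaré and Rellich embeddings and the maximum-principle input, and the same bound holds for $(L^*)^{-1}$ by the adjoint identity.

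The main obstacle I anticipate is precisely the passage to the limit in the drift term: a priori $\tilde b_j$ converges only weakly-$*$ and $\nabla u_j$ only weakly, so their product need not converge. The resolution, and the heart of the argument, is the compactness improvement of the second paragraph, which upgrades the weak $H_0^1$ convergence of $u_j$ to strong convergence; once that is available, the weak-$*$-times-strong pairing closes the loop. (If one prefers to avoid invoking the weak maximum principle for injectivity, one can instead run the same compactness argument simultaneously for the adjoint family $\{L^*:L\in\mathcal{G}(L_H)\}$ and combine the two resulting a priori estimates to conclude bijectivity; the overall structure of the proof is unchanged.)
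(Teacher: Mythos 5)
Your proof is correct and follows essentially the route the paper indicates: the paper defers to \cite[Lemma~4.5]{osborne2022analysis} and remarks only that invertibility of $L$ and $L^*$ follows from the Fredholm alternative together with the weak maximum principle, which is exactly your first step. Your compactness-and-contradiction argument for the uniformity of the bound over $\mathcal{G}(L_H)$ — upgrading weak to strong $H_0^1$ convergence via the identity $u_j=L_0^{-1}f_j-L_0^{-1}(\tilde b_j\cdot\nabla u_j)$ and then pairing weak-$*$ convergence of the drifts against strong $L^1$ convergence of $v\,\nabla u_j$ — is sound and supplies the self-contained detail the paper leaves to the cited reference.
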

We note the fact that each operator from $L\in \mathcal{G}(L_H)$ and its adjoint $L^*$ are both invertible follows from the Fredholm Alternative and the Weak Maximum Principle (WMP). Importantly, each $L\in \mathcal{G}(L_H)$ satisfies the conditions of the Weak Maximum Principle~\cite[Theorem~8.1]{gilbarg2015elliptic}, which implies also the Comparison Principle for the adjoint operator $L^*$, so that $L^*v\geq 0$ in the sense of distributions in $H^{-1}(\Omega)$ implies $v\geq 0$ a.e.\ in $\Omega$.

{We now state the following well-posedness result for the HJB equation and its regularizations. To express this as succinctly as possible, let us define $H_0\coloneqq H$ where $H$ is the Hamiltonian defined in~\eqref{Hamiltonian}, so that we may then extend the family of Hamiltonians considered to the set $\{H_\lambda\}_{\lambda\in[0,1]}$.}

\begin{lemma}[Well-posedness of the Regularized HJB equation]\label{lemma-HJB-continuous-moreau-yosida}
	{Assume~\ref{H:reg-family}.
		Then, for each $\lambda\in [0,1]$ and each $\widetilde{m}\in \mathcal{X}$, there exists a unique $u_{\lambda}\in H_0^1(\Omega)$ such that
		\begin{equation}\label{eq:weak-hjb-reg}
			\int_{\Omega}\nu\nabla u_{\lambda}\cdot\nabla \psi+ H_{\lambda}[\nabla u_{\lambda}]\psi\,\mathrm{d}x = \langle F[\widetilde{m}],\psi\rangle_{H^{-1}\times H_0^1} \quad\forall \psi\in H_0^{1}(\Omega).
		\end{equation}
		{In addition,} 
		\begin{equation}\label{eq:hjb-reg_a_priori_bound}
			\lVert u_{\lambda} \rVert_{H^1(\Omega)}\lesssim \lVert \widetilde{m} \rVert_{\mathcal{X}} + \lVert f \rVert_{C(\overline{\Omega}\times \mathcal{A})}+\omega(\lambda)+1.
		\end{equation}
		Moreover, the solution $u_{\lambda}$ depends continuously on $\widetilde{m}$, i.e., if $\left\{\widetilde{m}_j\right\}_{j\in\mathbb{N}}\subset \mathcal{X}$ is such that $\widetilde{m}_j\to \widetilde{m}$ in $\mathcal{X}$ as $j\to\infty$, then the corresponding sequence of solutions $\left\{u_{\lambda,j}\right\}_{j\in\mathbb{N}}\subset H_0^1(\Omega)$ of~\eqref{eq:weak-hjb-reg} with data $\widetilde{m}_j$ converges in $H_0^1(\Omega)$ to the unique solution $u_{\lambda}$ of~\eqref{eq:weak-hjb-reg} with datum $\widetilde{m}$.}
\end{lemma}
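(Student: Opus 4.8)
The plan is to treat the regularized HJB equation~\eqref{eq:weak-hjb-reg} as a quasilinear elliptic equation with a monotone (in fact Lipschitz) zeroth-order-type nonlinearity in the gradient, and to argue via a fixed-point/monotone-operator argument. First I would fix $\lambda\in[0,1]$ and $\widetilde m\in\mathcal X$, set $g\coloneqq F[\widetilde m]\in H^{-1}(\Omega)$ (well-defined by continuity of $F$), and introduce the map $\Phi:H^1_0(\Omega)\to H^1_0(\Omega)$ that sends $w$ to the unique solution $v$ of the linear problem $\int_\Omega \nu\nabla v\cdot\nabla\psi\,\mathrm d x=\langle g,\psi\rangle - \int_\Omega H_\lambda[\nabla w]\psi\,\mathrm d x$ for all $\psi\in H^1_0$; here the right-hand side is a bounded linear functional on $H^1_0$ because $H_\lambda[\nabla w]\in L^2(\Omega)$ by the linear growth bound~\eqref{H-bounds:linear-growth-lam}, and $v$ exists and is unique by Lax--Milgram. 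A fixed point of $\Phi$ is exactly a solution of~\eqref{eq:weak-hjb-reg}. To get existence I would either (i) show $\Phi$ maps a sufficiently large closed ball of $H^1_0$ into itself and is compact (using the compact embedding $H^1_0\hookrightarrow\mathcal X$ is not needed here, but the Rellich embedding $H^1_0\hookrightarrow L^2$ combined with continuity of $p\mapsto H_\lambda(x,p)$ and dominated convergence gives continuity of $w\mapsto H_\lambda[\nabla w]$ only in a weak sense, so some care is needed), then apply Schauder; or, cleaner, (ii) recast~\eqref{eq:weak-hjb-reg} as $\mathcal T u=g$ where $\mathcal T u\coloneqq -\nu\Delta u + H_\lambda[\nabla u]$ is seen as an operator $H^1_0\to H^{-1}$ and verify that $\mathcal T$ is bounded, continuous, coercive, and strictly monotone, hence bijective by the Browder--Minty theorem. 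Monotonicity: for $u_1,u_2$, $\langle\mathcal T u_1-\mathcal T u_2,u_1-u_2\rangle = \nu\|\nabla(u_1-u_2)\|_{L^2}^2 + \int_\Omega (H_\lambda[\nabla u_1]-H_\lambda[\nabla u_2])(u_1-u_2)\,\mathrm d x$; the second term is controlled below using the Lipschitz bound~\eqref{H-bounds:lipschitz-lam} as $\ge -L_H\|\nabla(u_1-u_2)\|_{L^2}\|u_1-u_2\|_{L^2}\ge -L_H C_P\|\nabla(u_1-u_2)\|_{L^2}^2$ with $C_P$ the Poincaré constant — which is \emph{not} obviously dominated by $\nu$.

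This last point is the main obstacle: without a smallness assumption relating $\nu$, $L_H$ and $C_P$, the operator $\mathcal T$ need not be monotone, and the naive fixed-point map $\Phi$ need not be a contraction. I would resolve it exactly as in the companion results already invoked in the excerpt — namely by appealing to the uniform invertibility of the family $\mathcal G(L_H)$ from Lemma~\ref{lemma-uniform-L-inv-bound}, which builds in the Weak Maximum Principle rather than coercivity. Concretely: given $w\in H^1_0$, let $\tilde b\coloneqq \frac{\partial H_\lambda}{\partial p}(\cdot,\nabla w)$ if $\lambda>0$, respectively any $\tilde b\in D_pH[w]$ if $\lambda=0$; then $\|\tilde b\|_{L^\infty}\le L_H$ by Lemma~\ref{lemma-DpH-non-empty} (resp.\ by the same bound on $\partial_pH_\lambda$ noted in the proof of Lemma~\ref{lemma-moreau-yosida-approx-properties}). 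Writing $H_\lambda[\nabla w]=H_\lambda(\cdot,0)+\tilde b_w\cdot\nabla w$ for a suitable $\tilde b_w$ with $\|\tilde b_w\|_{L^\infty}\le L_H$ (this holds pointwise for convex $H_\lambda$ by the subgradient inequality applied between $\nabla w(x)$ and $0$, with $\tilde b_w(x)\in\partial_pH_\lambda(x,\nabla w(x))$), equation~\eqref{eq:weak-hjb-reg} becomes $L_w u_\lambda = g - H_\lambda(\cdot,0)$ with $L_w\in\mathcal G(L_H)$. So I would define $\Psi:H^1_0\to H^1_0$, $w\mapsto L_w^{-1}(g-H_\lambda(\cdot,0))$; Lemma~\ref{lemma-uniform-L-inv-bound} gives $\|\Psi(w)\|_{H^1}\le C_1\|g-H_\lambda(\cdot,0)\|_{H^{-1}}$ uniformly in $w$, so $\Psi$ maps the closed ball of that radius into itself, and compactness of $\Psi$ follows because $w\mapsto L_w$ and hence $w\mapsto L_w^{-1}$ is continuous from $H^1_0$ (strong) into $\mathcal L(H^{-1},H^1_0)$ after passing through the Rellich embedding — more precisely, if $w_j\to w$ in $H^1_0$ then $\nabla w_j\to\nabla w$ in $L^2$, a subsequence converges a.e., continuity of $\partial_p H_\lambda$ gives $\tilde b_{w_j}\to\tilde b_w$ a.e., dominated convergence gives convergence of $\tilde b_{w_j}\cdot\nabla w_j$ in $L^2$, hence $L_{w_j}\to L_w$ in operator norm, hence $\Psi(w_j)\to\Psi(w)$ in $H^1_0$. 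Thus $\Psi$ is continuous on the ball; Schauder's fixed point theorem then yields a fixed point, which is a solution of~\eqref{eq:weak-hjb-reg}. (For $\lambda>0$, $\tilde b_w$ is uniquely $\frac{\partial H_\lambda}{\partial p}(\cdot,\nabla w)$, which is cleanest.)

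For uniqueness and continuous dependence I would subtract two equations. If $u^{(1)},u^{(2)}$ both solve~\eqref{eq:weak-hjb-reg} with data $\widetilde m_1,\widetilde m_2$, write $H_\lambda[\nabla u^{(1)}]-H_\lambda[\nabla u^{(2)}]=\hat b\cdot\nabla(u^{(1)}-u^{(2)})$ a.e.\ for some measurable $\hat b$ with $\|\hat b\|_{L^\infty}\le L_H$ (for $\lambda>0$ by the integral mean-value theorem along the segment joining $\nabla u^{(2)}(x)$ to $\nabla u^{(1)}(x)$, using continuity of $\frac{\partial H_\lambda}{\partial p}$; for $\lambda=0$ a standard selection argument gives the same, cf.\ the graph-closedness Lemma~\ref{inclusion}, though one must be slightly careful — alternatively for $\lambda=0$ one invokes the cited existence--uniqueness machinery directly). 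Then $e\coloneqq u^{(1)}-u^{(2)}$ satisfies $L_e^{\flat} e = F[\widetilde m_1]-F[\widetilde m_2]$ for the operator $L^{\flat}_e\in\mathcal G(L_H)$ built from $\hat b$, so Lemma~\ref{lemma-uniform-L-inv-bound} yields $\|e\|_{H^1}\le C_1\|F[\widetilde m_1]-F[\widetilde m_2]\|_{H^{-1}}$. Taking $\widetilde m_1=\widetilde m_2$ gives uniqueness; the general inequality combined with continuity of $F:\mathcal X\to H^{-1}$ gives the asserted continuous dependence $\widetilde m_j\to\widetilde m$ in $\mathcal X$ $\Rightarrow$ $u_{\lambda,j}\to u_\lambda$ in $H^1_0$. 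Finally, the a priori bound~\eqref{eq:hjb-reg_a_priori_bound} follows from $\|u_\lambda\|_{H^1}\le C_1\|g-H_\lambda(\cdot,0)\|_{H^{-1}}$, the linear growth~\eqref{F-linear-growth} of $F$ giving $\|g\|_{H^{-1}}=\|F[\widetilde m]\|_{H^{-1}}\lesssim \|\widetilde m\|_{\mathcal X}+1$, and the bound $\|H_\lambda(\cdot,0)\|_{L^2(\Omega)}\lesssim |H(\cdot,0)|+\omega(\lambda)\le \|f\|_{C(\overline\Omega\times\mathcal A)}+\omega(\lambda)$ coming from~\eqref{H-bounds:reg-unif-approx_1} and~\eqref{Hamiltonian} (note $H(x,0)=\sup_\alpha(-f(x,\alpha))\le\|f\|_{C(\overline\Omega\times\mathcal A)}$, and similarly $\ge-\|f\|$). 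I expect the genuinely delicate point to be the $\lambda=0$ case of the mean-value-type representation and the compactness/continuity of $\Psi$, both of which are handled by the already-cited results of~\cite{osborne2022analysis}; everything else is routine.
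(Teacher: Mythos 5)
The paper itself omits the proof of this lemma, deferring to~\cite[Appendix~A]{osborne2022analysis}, so there is no in-paper argument to compare against; your overall architecture --- absorbing the semilinear term into an advective coefficient of $L^\infty$-norm at most $L_H$ so that the uniform invertibility of $\mathcal{G}(L_H)$ from Lemma~\ref{lemma-uniform-L-inv-bound} applies, then a fixed point for existence and the same representation applied to differences for uniqueness, continuous dependence, and the a priori bound --- is the right one, and you correctly identify why naive coercivity or monotonicity fails when $L_H$ is large relative to $\nu$. The uniqueness, continuous-dependence, and a priori-bound parts are essentially correct.

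Two steps in the existence argument are genuinely flawed as written. First, the identity $H_\lambda[\nabla w]=H_\lambda(\cdot,0)+\tilde b_w\cdot\nabla w$ with $\tilde b_w(x)\in\partial_pH_\lambda(x,\nabla w(x))$ is false in general: the subgradient inequality between $\nabla w(x)$ and $0$ yields only $H_\lambda(x,\nabla w(x))\leq H_\lambda(x,0)+\tilde b_w(x)\cdot\nabla w(x)$, and equality fails for any strictly convex $H_\lambda$ (in one dimension, if $H_\lambda(p)=p^2/2$ near $p=1$ then $H_\lambda(1)=\tfrac12$ whereas $H_\lambda(0)+H_\lambda'(1)\cdot 1=1$). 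The representation you need is nevertheless true, but with the averaged derivative $\tilde b_w(x)=\int_0^1\frac{\partial H_\lambda}{\partial p}(x,t\nabla w(x))\,\mathrm{d}t$ for $\lambda>0$, or with the secant-slope field $\tilde b_w(x)=\bigl(H_\lambda(x,\nabla w(x))-H_\lambda(x,0)\bigr)\nabla w(x)/|\nabla w(x)|^2$ (set to zero where $\nabla w=0$), which is measurable, bounded by $L_H$ via~\eqref{H-bounds:lipschitz-lam}, and valid for all $\lambda\in[0,1]$ including $\lambda=0$; the averaged derivative still has the a.e.-convergence property your continuity argument uses. Second, Schauder's theorem on the closed $H^1_0$-ball requires $\Psi$ to be a \emph{compact} map, whereas what you establish is only its strong-to-strong continuity; a continuous self-map of a noncompact ball need not have a fixed point. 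Compactness of $\Psi$ is in fact true, but for a reason you do not give: writing $\Psi(w)=(-\nu\Delta)^{-1}\bigl(h-\tilde b_w\cdot\nabla\Psi(w)\bigr)$ with $h=g-H_\lambda(\cdot,0)$, the second argument ranges over an $L^2$-bounded set (by the uniform bound $\|\Psi(w)\|_{H^1(\Omega)}\leq C_1\|h\|_{H^{-1}(\Omega)}$ and $\|\tilde b_w\|_{L^\infty(\Omega;\mathbb{R}^d)}\leq L_H$), and $(-\nu\Delta)^{-1}$ is compact from $L^2(\Omega)$ into $H^1_0(\Omega)$. Alternatively --- and this also disposes of the delicate $\lambda=0$ case, where $\partial_pH$ is set-valued and the continuity of $w\mapsto\tilde b_w$ is problematic --- apply the Leray--Schauder/Schaefer theorem directly to the manifestly compact and continuous map $w\mapsto(-\nu\Delta)^{-1}(g-H_\lambda[\nabla w])$, using your $\mathcal{G}(L_H)$-representation only to obtain the $\sigma$-uniform a priori bound for the homotopy $-\nu\Delta u+\sigma H_\lambda[\nabla u]=\sigma g$, $\sigma\in[0,1]$.
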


{
	Note that the only assumptions on $F$ required by Lemma~\ref{lemma-HJB-continuous-moreau-yosida} is that $F$ should be continuous from $\mathcal{X}$ to $H^{-1}(\Omega)$ and satisfy the growth condition~\eqref{F-linear-growth}.
	Observe also that for the case $\lambda=0$, Lemma~\ref{lemma-HJB-continuous-moreau-yosida} provides a statement of well-posedness for the unregularized HJB equation
	\begin{equation}\label{eq:weak-hjb}
		\int_{\Omega}\nu\nabla u\cdot\nabla \psi+ H[\nabla u]\psi\,\mathrm{d}x = \langle F[\widetilde{m}],\psi\rangle_{H^{-1}\times H_0^1} \quad\forall \psi\in H_0^{1}(\Omega),
	\end{equation}
	and moreover the term $\omega(\lambda)$ in the bound~\eqref{eq:hjb-reg_a_priori_bound} then vanishes as a result of the hypothesis $\omega(0)=0$.
	Lemma~\ref{lemma-HJB-continuous-moreau-yosida} is essentially already well-known, and is a small generalization of~\cite[Lemma~4.6]{osborne2022analysis}. Since its proof is standard by now, we shall omit it here for brevity, see \cite[Appendix~A]{osborne2022analysis} for further details.}

\subsection{Proof of Theorem~\ref{theorem-existence-uniqueness-mfg-pdi}}\label{sec:proof_existence}
The existence of a weak solution of the MFG PDI~\eqref{mfg-pdi-sys} {can be shown using Kakutani's fixed point theorem}. 
It follows the ideas in~\cite[Theorem~3.2.1]{osborne2024thesis} and in~\cite{ducasse2020second}. 
{We start by recalling Kakutani's fixed point theorem for set-valued maps on locally convex spaces, see for instance~\cite[Chapter~9; Theorem~9.B]{ZMR0816732}.}
\begin{theorem}[Kakutani's fixed point theorem]\label{thm-kakutani}
	{Suppose that} 
	\begin{enumerate}
		\item {$\mathcal{B}$ is a nonempty, compact, convex set in a locally convex space $\mathcal{Y}$;}
		\item {$\mathcal{V}:\mathcal{B}\rightrightarrows\mathcal{B}$ is a set-valued map such that $\mathcal{V}[\tilde{b}]$ is nonempty, closed and convex for all $\tilde{b}\in\mathcal{B}$; and}
		\item {$\mathcal{V}$ is upper semi-continuous.}
	\end{enumerate}
	{Then $\mathcal{V}$ has a fixed point: there exists a $\tilde{b}\in\mathcal{B}$ such that $\tilde{b}\in \mathcal{V}[\tilde{b}]$.} 
\end{theorem}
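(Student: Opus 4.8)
The plan is to derive Kakutani's theorem in this locally convex setting (the Kakutani--Fan--Glicksberg theorem) from Brouwer's fixed point theorem in two stages: first the finite-dimensional case, then a reduction of the general case to finite dimensions via approximation, exploiting throughout the compactness of $\mathcal{B}$. \textbf{Finite-dimensional case.} I would first treat $\mathcal{Y}=\mathbb{R}^n$, so that $\mathcal{B}$ is compact and convex in $\mathbb{R}^n$, by approximating the set-valued map $\mathcal{V}$ by genuine continuous single-valued maps and applying Brouwer. For each $\epsilon>0$ I would cover $\mathcal{B}$ by finitely many balls $B_\epsilon(x_i)$, take a continuous partition of unity $\{\varphi_i\}$ subordinate to the cover, select a point $y_i\in\mathcal{V}[x_i]$ for each $i$, and set $f_\epsilon(x)\coloneqq\sum_i\varphi_i(x)\,y_i$. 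Convexity of $\mathcal{B}$ keeps the image in $\mathcal{B}$, so $f_\epsilon$ is a continuous self-map and Brouwer yields $x_\epsilon=f_\epsilon(x_\epsilon)$. Letting $\epsilon\to0$, extracting a convergent subsequence $x_\epsilon\to x^*$ and invoking the closed-graph property below then delivers $x^*\in\mathcal{V}[x^*]$.

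\textbf{Closed graph.} I would record the standard fact that, since $\mathcal{B}$ is compact and each image $\mathcal{V}[\tilde b]$ is closed (hence compact), upper semicontinuity of $\mathcal{V}$ is equivalent to the graph $\{(\tilde b,y):y\in\mathcal{V}[\tilde b]\}$ being closed in $\mathcal{B}\times\mathcal{B}$. This is precisely what licenses every limit passage: if $\tilde b_k\to\tilde b$ and $y_k\to y$ with $y_k\in\mathcal{V}[\tilde b_k]$, then $y\in\mathcal{V}[\tilde b]$.

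\textbf{Reduction to finite dimensions.} For a general locally convex $\mathcal{Y}$, I would use compactness of $\mathcal{B}$ to approximate. Given a convex symmetric neighbourhood $U$ of the origin, compactness gives finitely many $b_1,\dots,b_k\in\mathcal{B}$ with $\mathcal{B}\subset\bigcup_i(b_i+U)$. Let $C_U\coloneqq\conv\{b_1,\dots,b_k\}$, a finite-dimensional compact convex set, and let $p_U:\mathcal{B}\to C_U$ be the Schauder projection $p_U(x)=\sum_i\varphi_i(x)\,b_i$ built from a partition of unity subordinate to $\{b_i+U\}$; symmetry and convexity of $U$ give $p_U(x)-x\in U$ for all $x\in\mathcal{B}$. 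I would then define $\mathcal{V}_U:C_U\rightrightarrows C_U$ by $\mathcal{V}_U[x]\coloneqq\conv\bigl(p_U(\mathcal{V}[x])\bigr)$, which inherits nonempty, closed, convex values and upper semicontinuity from $\mathcal{V}$, $p_U$, and the closed-graph characterization. Applying the finite-dimensional case to $\mathcal{V}_U$ produces $x_U\in\mathcal{V}_U[x_U]$; writing $x_U$ as a convex combination of points $p_U(y^{(j)})$ with $y^{(j)}\in\mathcal{V}[x_U]$, and using convexity of the value $\mathcal{V}[x_U]$, I obtain $y_U\coloneqq\sum_j\lambda_j y^{(j)}\in\mathcal{V}[x_U]$ with $x_U-y_U\in U$. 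Thus $x_U$ is a genuine $U$-approximate fixed point. Finally, directing the neighbourhood filter of $0$ by reverse inclusion yields a net $\{x_U\}$ in the compact set $\mathcal{B}$; a convergent subnet $x_U\to x^*$ forces $y_U\to x^*$ as well, since $x_U-y_U$ is eventually inside every neighbourhood of $0$, and the closed graph gives $x^*\in\mathcal{V}[x^*]$.

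\textbf{Main obstacle.} The finite-dimensional reduction to Brouwer is routine; the delicate part is the infinite-dimensional passage. Specifically, one must build the approximating maps $\mathcal{V}_U$ so that they \emph{simultaneously} retain nonempty, closed, and convex values and remain upper semicontinuous after composition with $p_U$ — the closed-convex-hull step and the convexity of the original values $\mathcal{V}[x]$ are both essential here, the latter being exactly what upgrades an approximate fixed point of $\mathcal{V}_U$ into a genuine point of $\mathcal{V}[x_U]$. The remaining subtlety is that, because $\mathcal{Y}$ need not be metrizable, the final limit must be taken along \emph{nets} rather than sequences, and it is the compactness of $\mathcal{B}$ together with the closed-graph property that guarantees the approximate fixed points accumulate at an exact fixed point rather than merely near the graph.
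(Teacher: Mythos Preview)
The paper does not actually prove Theorem~\ref{thm-kakutani}: it is stated as a known result with a reference to Zeidler's book, and then applied as a black box in the proof of Theorem~\ref{theorem-existence-uniqueness-mfg-pdi}. So there is no ``paper's own proof'' to compare against.

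Your outline is the standard two-stage route to the Kakutani--Fan--Glicksberg theorem (Brouwer $\Rightarrow$ finite-dimensional Kakutani $\Rightarrow$ locally convex version via Schauder projections and nets), and the main ideas are correct. One point in the finite-dimensional step deserves more care: from $x_\epsilon=f_\epsilon(x_\epsilon)=\sum_i\varphi_i(x_\epsilon)y_i$ with $y_i\in\mathcal{V}[x_i]$ and $|x_i-x_\epsilon|<\epsilon$, the closed-graph property alone does not immediately yield $x^*\in\mathcal{V}[x^*]$, since $x_\epsilon$ is a convex combination of points from \emph{different} images $\mathcal{V}[x_i]$ rather than a single one. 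The clean way is to use upper semicontinuity in the form: for every open $W\supset\mathcal{V}[x^*]$ there is $\delta>0$ with $\mathcal{V}[x]\subset W$ whenever $|x-x^*|<\delta$; then for small $\epsilon$ each $y_i\in W$, hence $x_\epsilon\in\conv W$, and intersecting over convex open $W\supset\mathcal{V}[x^*]$ (using that $\mathcal{V}[x^*]$ is closed and convex) gives $x^*\in\mathcal{V}[x^*]$. With that refinement the argument goes through.
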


\begin{proof}[Proof of Theorem~\ref{theorem-existence-uniqueness-mfg-pdi}]
	{Recall that the Lipschitz constant of the Hamiltonian is $L_H$ given in~\eqref{bounds:lipschitz}. We equip the space $\mathcal{Y}\coloneqq L^{\infty}(\Omega;\mathbb{R}^d)$ with its weak-$*$ topology, noting that it is then a locally convex topological vector space. Let $\mathcal{B}$ denote the ball 
		\begin{equation}
			\mathcal{B}\coloneqq \left\{\tilde{b}\in L^{\infty}(\Omega;\mathbb{R}^d): \|\tilde{b}\|_{L^{\infty}(\Omega;\mathbb{R}^d)}\leq L_H\right\}.
		\end{equation}
		We note that $\mathcal{B}$ is nonempty, closed in the weak-$*$ topology, and convex. Since $L^1(\Omega;\mathbb{R}^d)$ is separable, the weak-$*$ topology on $\mathcal{B}$ is metrizable \cite[Ch.\ 15]{royden2018real}. Moreover, Helly's theorem implies that $\mathcal{B}$ is compact.
		
		Let $M: \mathcal{B}\to H_0^1(\Omega)$ be the map defined as follows: for each $\tilde{b}\in \mathcal{B}$, let $M[\tilde{b}]$ in $H_0^1(\Omega)$ be the unique solution of
		\begin{equation}\label{M-map-def}
			\int_{\Omega}\nu\nabla M[\tilde{b}]\cdot\nabla \phi+M[\tilde{b}]\tilde{b}\cdot\nabla \phi\,\mathrm{d}x=\langle G,\phi\rangle_{H^{-1}\times H_0^1}\text{  }\text{ }\forall\phi\in H_0^1(\Omega).
		\end{equation}
		The map $M$ is well-defined thanks to Lemma~\ref{lemma-uniform-L-inv-bound}. 
		Next, let $U:{\mathcal{X}}\to H_0^1(\Omega)$ be the map defined as follows: for each {$\widetilde{m}\in \mathcal{X}$, let $U[\widetilde{m}]\in H_0^1(\Omega)$} denote the unique solution of
		\begin{equation}\label{U-map-def}
			\int_{\Omega}\nu\nabla U[m]\cdot\nabla \psi+H(x,\nabla U[m])\psi \,\mathrm{d}x=\langle F[{\widetilde{m}}],\psi\rangle_{H^{-1}\times H_0^1} \text{ }\text{ }\forall\psi\in H_0^1(\Omega).
		\end{equation}
		The map $U$ is well-defined by Lemma~\ref{lemma-HJB-continuous-moreau-yosida}. 
		
		Now, we define the set-valued map $\mathcal{V}:\mathcal{B}\rightrightarrows L^{\infty}(\Omega;\mathbb{R}^d)$ as follows: for each $\tilde{b}\in \mathcal{B}$, let 
		\begin{equation}
			\mathcal{V}[\tilde{b}]\coloneqq  D_pH\big[U\big[M\big[\tilde{b}\big]\big]\big].
		\end{equation}
		{Note that in taking the composition of $U$ with $M$, we are implicitly using the continuous embedding of $H^1_0(\Omega)$ into $\mathcal{X}$.}
		{The existence of a weak solution {of} the MFG PDI~\eqref{mfg-pdi-sys} in the sense of Definition~\ref{weakdef} is equivalent to showing the existence of a fixed point of $\mathcal{V}$, i.e.\ that there exists a $\tilde{b}_*\in \mathcal{B}$ such that $\tilde{b}_*\in\mathcal{V}[\tilde{b}_*].$ Indeed, if $\tilde{b}_*\in \mathcal{B}$ satisfies $\tilde{b}_*\in\mathcal{V}[\tilde{b}_*]$ then a solution pair $(u,m)$ of the weak formulation~\eqref{weakform} of~\eqref{mfg-pdi-sys} is given by $m\coloneqq M[\tilde{b}_*]$ and $u\coloneqq U[m]$ with $\tilde{b}_*\in D_pH[u]$, while the converse is obvious.}
		
		{We now check that $\mathcal{V}$ satisfies all the conditions of Kakutani's fixed point theorem. 
			First,} Lemma~\ref{lemma-DpH-non-empty} implies that $\mathcal{V}[\tilde{b}]\subset\mathcal{B}$ for each $\tilde{b}\in\mathcal{B}$, so $\mathcal{V}:\mathcal{B}\rightrightarrows \mathcal{B}$. Moreover, for every $\tilde{b}\in \mathcal{B}$, the set $\mathcal{V}[\tilde{b}]$ is nonempty and convex.
		Indeed, for each $\tilde{b}\in\mathcal{B}$ the set $\mathcal{V}[\tilde{b}]$ is nonempty by Lemma~\ref{lemma-DpH-non-empty}.
		{The fact that $\mathcal{V}$ has convex images is an elementary consequence of the fact that $\partial_p H$ has convex images.}
		Indeed, let $b_1,b_2\in \mathcal{V}[\tilde{b}]$ and $\theta\in [0,1]$ be given. It follows from the definition of the inclusions $b_j\in \mathcal{V}[\tilde{b}] = D_pH\big[U\big[M\big[\tilde{b}\big]\big]\big]$ for $j\in\{1,2\}$ and the subdifferential $\partial_pH$ that, for a.e.\ $x\in \Omega$, 
		$$H(x,q)\geq H(x,\nabla U\big[M\big[\tilde{b}\big]\big](x)) + b_j(x)\cdot(q-\nabla U\big[M\big[\tilde{b}\big]\big](x))\quad\forall q\in\mathbb{R}^d, j\in\{1,2\}$$
		from which we see that
		$$H(x,q)\geq H(x,\nabla U\big[M\big[\tilde{b}\big]\big](x)) + (\theta b_1(x)+(1-\theta)b_2(x))\cdot(q-\nabla U\big[M\big[\tilde{b}\big]\big](x))\quad\forall q\in\mathbb{R}^d.$$ This shows that $\theta b_1+(1-\theta)b_2\in \mathcal{V}[\tilde{b}]$, so $\mathcal{V}[\tilde{b}]$ is convex as claimed.
		Furthermore, $\mathcal{V}[\tilde{b}]$ is closed for all $\tilde{b}\in\mathcal{B}$. Indeed, since $\mathcal{B}$ is metrisable and $\mathcal{V}[\tilde{b}]\subset\mathcal{B}$, to show that $\mathcal{V}[\tilde{b}]$ is closed it suffices to show that any sequence $\{\overline{b}_j\}_{j\in\mathbb{N}}\subset \mathcal{V}[\tilde{b}]$ with $\overline{b}_j\rightharpoonup^*\overline{b}$ in $L^{\infty}(\Omega;\mathbb{R}^d)$ as $j\to\infty$ satisfies $\overline{b}\in \mathcal{V}[\tilde{b}]=D_pH\big[U\big[M\big[\tilde{b}\big]\big]\big]$. But, given such a sequence $\{\overline{b}_j\}_{j\in\mathbb{N}}$, one can consider the constant sequence  $\{v_j\}_{j\in\mathbb{N}}$ in $H^1(\Omega)$ defined by $v_j\coloneqq U\big[M\big[\tilde{b}\big]\big]$ in $H^1(\Omega)$ for $j\in\mathbb{N}$, and then apply Lemma~\ref{inclusion} to deduce the required inclusion $\overline{b}\in D_pH\big[U\big[M\big[\tilde{b}\big]\big]\big]=\mathcal{V}[\tilde{b}],$ so $\mathcal{V}[\tilde{b}]$ is closed.
		
		{It remains only to} verify that $\mathcal{V}$ is upper semi-continuous. To this end, it suffices to prove that the graph of $\mathcal{V}$ is closed; c.f.\ \cite[Ch.\ 1, Corollary 1, p.\ 42]{MR0755330}.  
		Let $\mathcal{W}$ denote the graph of $\mathcal{V}$, which is defined by 
		\begin{equation}\label{graph-def}
			\mathcal{W}\coloneqq \left\{(\tilde{b},\overline{b})\in \mathcal{B}\times\mathcal{B}:\overline{b}\in\mathcal{V}[\tilde{b}]\right\}.
		\end{equation}
		Since $\mathcal{B}$ is metrisable, to show that the graph $\mathcal{W}$ is a closed it is enough to show that whenever a sequence $\{(\tilde{b}_i,\overline{b}_i)\}_{i\in\mathbb{N}}\subset \mathcal{W}$ converges weakly-$*$ in $\mathcal{B}\times\mathcal{B}$ to a point $(\tilde{b},\overline{b})$ as $i\to\infty$, then $(\tilde{b},\overline{b})\in\mathcal{W}$,  which is equivalent to $\overline{b}\in \mathcal{V}[\tilde{b}]$. 
		Let us then suppose that we are given a sequence $\{(\tilde{b}_i,\overline{b}_i)\}_{i\in\mathbb{N}}\subset \mathcal{W}$ that converges weakly-$*$ in $\mathcal{B}\times\mathcal{B}$ to a point $(\tilde{b},\overline{b})$ as $i\to\infty$. 
		To begin, we claim that $M[\tilde{b}_i]\to M[\tilde{b}]$ in $\mathcal{X}$ as $i\to\infty$.
		Indeed, since $\{\tilde{b}_i\}_{i\in\mathbb{N}}\subset\mathcal{B}$, for each $i\in\mathbb{N}$ we apply Lemma~\ref{lemma-uniform-L-inv-bound} to obtain the uniform bound
		\begin{equation}\label{M_n}
			\sup_{i\in\mathbb{N}}\|M[\tilde{b}_i]\|_{H^1(\Omega)}\lesssim \|G\|_{H^{-1}(\Omega)}.
		\end{equation}
		
		We deduce from this that any given subsequence $\{M[\tilde{b}_{i_j}]\}_{j\in\mathbb{N}}$ is bounded uniformly in $H_0^1(\Omega)$.
		The Rellich--Kondrachov compactness theorem {and the compactness of the embedding of $H^1_0(\Omega)$ into $\mathcal{X}$} then {imply} that there exists a further subsequence $\{M[\tilde{b}_{i_{j_k}}]\}_{k\in\mathbb{N}}$ and {a} $m\in H_0^1(\Omega)$ {such that the $M[\tilde{b}_{i_{j_k}}]$ converge to $m$ weakly in $H^1_0(\Omega)$, strongly in $L^2(\Omega)$, and also strongly in $\mathcal{X}$, as $k\to \infty$.}
		By $L^{\infty}$-weak-$*$ $\times$ $L^2$-strong convergence, we also have that $M[\tilde{b}_{i_{j_k}}]\tilde{b}_{i_{j_k}}\rightharpoonup m\tilde{b}$ in $L^2(\Omega;\mathbb{R}^d)$ as $k\to\infty$. 
		Passing to the limit in the KFP equation~\eqref{M-map-def} satisfied by $M[\tilde{b}_{i_{j_k}}]$ for $k\in\mathbb{N}$, we deduce that $m$ satisfies 
		\begin{equation}\label{m-alt-eqn}
			\begin{aligned}
				\int_{\Omega}\nu\nabla m\cdot\nabla \phi+m\tilde{b}\cdot\nabla \phi\,\mathrm{d}x=\langle G,\phi\rangle_{H^{-1}\times H_0^1} &&&\forall\phi\in H_0^1(\Omega).
			\end{aligned}
		\end{equation}
		But by definition of $M[\tilde{b}]$ in~\eqref{M-map-def}, we see that $m=M[\tilde{b}]$ in $H_0^1(\Omega)$.
		{The uniqueness of the limit then} implies that the entire sequence $\{M[\tilde{b}_i]\}_{i\in\mathbb{N}}$ satisfies $M[\tilde{b}_i]\to M[\tilde{b}]$ in $\mathcal{X}$ as $i\to\infty$. 
		Lemma~\ref{lemma-HJB-continuous-moreau-yosida} implies that $U[M[\tilde{b}_i]]\to U[M[\tilde{b}]]$ in $H_0^1(\Omega)$ as $i\to\infty$. 
		By hypothesis, $\overline{b}_i\in \mathcal{V}[\tilde{b}_i]=D_pH[U[M[\tilde{b}_i]]]$ for $i\in\mathbb{N}$ and $\overline{b}_i\rightharpoonup^* \overline{b}$ in $L^{\infty}(\Omega;\mathbb{R}^d)$ as $i\to\infty$. 
		We conclude from Lemma~\ref{inclusion} that $\overline{b}\in D_pH[U[M[\tilde{b}]]]$, i.e.\ $\overline{b}\in \mathcal{V}[\tilde{b}]$.
		We have therefore shown that {the graph} $\mathcal{W}$ is closed, so $\mathcal{V}$ is upper semi-continuous.
		
		We have thus shown that the map $\mathcal{V}:\mathcal{B}\rightrightarrows\mathcal{B}$ satisfies the conditions of Kakutani's fixed-point theorem, so $\mathcal{V}$ admits a fixed point and therefore there exists a weak solution {of} the MFG PDI~\eqref{mfg-pdi-sys} in the sense of Definition~\ref{weakdef}.}
	
	Assuming, in addition, that $F$ satisfies~\ref{H:F-strict-mono} and $G$ satisfies~\ref{H:G-postive}, the uniqueness of a weak solution {of} the MFG PDI~\eqref{mfg-pdi-sys} in the sense of Definition~\ref{weakdef} follows from the same argument used in the proof of \cite[Theorem 3.4]{osborne2022analysis}.
	This argument carries through thanks to the convexity of the Hamiltonian $H$ w.r.t.\ $p$, together with the nonnegativity of $G$ as a distribution in $H^{-1}(\Omega)$ and the Comparison Principle ensuring that the density $m$ is nonnegative almost everywhere. 
\end{proof}

\section{Proofs of Theorem~\ref{theorem-convergence-moreau-yosida} and Corollary~\ref{corollary-strong-H1-conv-density-1}}\label{sec-pfs-of-main-result-1}

\subsection{Proof of Theorem~\ref{theorem-convergence-moreau-yosida}}

Let $\{(u_{\lambda_j},m_{\lambda_j})\}_{j\in\mathbb{N}}$ denote a sequence defined as follows: for each $j\in\mathbb{N}$, let $(u_{\lambda_j},m_{\lambda_j})$ denote a weak solution {of} the regularized problem~\eqref{eq:regularized_MFG} with $\lambda = \lambda_j\in (0,1]$. Thanks to Lemma~\ref{lemma-uniform-L-inv-bound}, the resulting sequence $\{m_{\lambda_j}\}_{j\in\mathbb{N}}$ is uniformly bounded in the $H^1$-norm. Since $\lambda_j\to 0$ as $j\to\infty$ and the embedding $H_0^1(\Omega)\subset\mathcal{X}$ is continuous, the bound~\eqref{eq:hjb-reg_a_priori_bound} and the definition of the resulting sequence $\{u_{\lambda_j}\}_{j\in\mathbb{N}}$ imply that $\{u_{\lambda_j}\}_{j\in\mathbb{N}}$ is uniformly bounded in the $H^1$-norm.  We may pass to subsequences, without change of notation, that satisfy as $j\to \infty$
\begin{align}
	m_{\lambda_j}\rightharpoonup m\quad\text{in}\quad H_0^1(\Omega),\quad m_{\lambda_j}\to m\quad&\text{in}\quad L^q({\Omega})\label{m-lam-conv-1},
	\\
	u_{\lambda_j}\rightharpoonup u\quad\text{in}\quad H_0^1(\Omega),\quad u_{\lambda_j}\to u\quad&\text{in}\quad L^q({\Omega})\label{u-lam-conv-1},
\end{align} 
for some $m,u\in H_0^1(\Omega)$,  for any $q\in [1,2^*)$, where $2^*\coloneqq \infty$ if $d=2$ and $2^*\coloneqq \frac{2d}{d-2}$ if $d\geq 3$, and for any $q\in[1,\infty]$ if $d=1$. Since the embedding  $H_0^1(\Omega)\subset\mathcal{X}$ is also compact, we may pass to a further subsequence (without change of notation) such that
\begin{equation}\label{m-lam-conv-2}
	m_{\lambda_j}\to m\quad\text{in}\quad \mathcal{X} 
\end{equation}as $j\to\infty$.

The  boundedness of $\{u_{\lambda_j}\}_{j\in\mathbb{N}}$ in $H_0^1(\Omega)$, together with the linear growth of the Hamiltonian $H_{\lambda_j}=H_{\lambda_j}(\cdot,p)$ (see~\eqref{H-bounds:linear-growth-lam}), imply that the sequence $\{H_{\lambda_j}[\nabla u_{\lambda_j}]\}_{j\in\mathbb{N}}$ is uniformly bounded in $L^2(\Omega)$. Therefore, there exists $g\in L^2(\Omega)$ such that, by passing to a subsequence without change of notation, we have
\begin{equation}\label{weakHconv}
	H_{\lambda_j}[\nabla u_{\lambda_j}]\rightharpoonup g\quad\text{in}\quad L^2(\Omega)
\end{equation}
as $j\to\infty$.
The continuity of $F:\mathcal{X}\to H^{-1}(\Omega)$ and the convergence~\eqref{m-lam-conv-2} imply that $F[m_{\lambda_j}]\to F[m]$ in $H^{-1}(\Omega)$ as $j\to\infty$. 
This convergence, together with~\eqref{u-lam-conv-1} and~\eqref{weakHconv}, {allows us 
	pass to the limit $j\tends \infty$ in the regularized HJB equation~\eqref{weakform1-space-time-moreau-yosida} to find that}
\begin{equation}\label{ueqnn}
	\int_{\Omega}\nu\nabla u\cdot\nabla v+gv\,\mathrm{d}x=\langle F[m],v\rangle_{H^{-1}\times H_0^1}\quad \forall v\in H_0^1(\Omega).
\end{equation}

We conclude  that $u_{\lambda_j}\to u$ in $H_0^1(\Omega)$ as $j\to \infty$. 
Indeed, {for each $j\in \N$, by testing~\eqref{weakform1-space-time-moreau-yosida} with $u_{\lambda_j}$, we see that}
\begin{multline}\label{eq:u-lam-j-norm-limit}
	\lim_{j\to\infty}\|\nabla u_{\lambda_j}\|_{L^2(\Omega)}^2=\lim_{j\to\infty}\nu^{-1}\left[\langle F[m_{\lambda_j}],u_{\lambda_j}\rangle_{H^{-1}\times H_0^1} - \int_{\Omega}H_{\lambda_j}[\nabla u_{\lambda_j}]u_{\lambda_j}\mathrm{d}x\right]
	\\
	=\nu^{-1}\left[\langle F[m],u\rangle_{H^{-1}\times H_0^1} - \int_{\Omega}gu\mathrm{d}x\right]=\|\nabla u\|_{L^2(\Omega)}^2,
\end{multline}
{where the final identity above follows from~\eqref{ueqnn} with test function $v=u$.}
{It then follows from~\eqref{u-lam-conv-1} and~\eqref{eq:u-lam-j-norm-limit} that the convergence of $u_{\lambda_j}\to u$ is also strong in $H_0^1(\Omega)$ as $j\to\infty$.}

{Next, it follows from~\eqref{bounds:lipschitz}, \eqref{H-bounds:reg-unif-approx_1} and from~\eqref{weakHconv} that $g=H[\nabla u]$ in $\Omega$ and thus}~\eqref{ueqnn} implies that $u$ solves 
\begin{equation}\label{u:eqn}
	\int_{\Omega}\nu\nabla u\cdot\nabla \psi+H[\nabla u]\psi\,\mathrm{d}x=\langle F[m],\psi\rangle_{H^{-1}\times H_0^1}\quad \forall \psi\in H_0^1(\Omega).
\end{equation}

Next, we deduce from Lemma~\ref{lemma-reg-inclusion} that we can pass to a subsequence (without change of notation) such that $\frac{\partial H_{\lambda_j}}{\partial p}[\nabla u_{\lambda_j}]\rightharpoonup^*\tilde{b}_*$ in $L^{\infty}(\Omega;\mathbb{R}^d)$, as $j\to\infty$, for some $\tilde{b}_*\in D_pH[u]$.
{We then use~\eqref{m-lam-conv-1} to pass to the limit in~\eqref{weakform2-space-time-moreau-yosida} to obtain}
\begin{equation}\label{weak-KFP-pf}
	\int_{\Omega}\nu\nabla m\cdot\nabla \phi+m\tilde{b}_*\cdot\nabla \phi \text{ }\mathrm{d}x=\langle G,\phi\rangle_{H^{-1}\times H_0^1}\quad\forall \phi\in H_0^1(\Omega).
\end{equation}
{This shows that the pair $(u,m)$ solves~\eqref{weakform} and moreover that, up to a subsequence, we have~\eqref{m-conv-2-moreau-yosida}.}
\hfill\proofbox

\subsection{Proof of Corollary~\ref{corollary-strong-H1-conv-density-1}}
{Following the proof of Theorem~\ref{theorem-convergence-moreau-yosida} above, we consider a subsequence (to which we pass without change of notation) such that the $(u_{\lambda_j},m_{\lambda_j})$ converge to a solution $(u,m)$ of~\eqref{weakform} in the sense of~\eqref{m-conv-2-moreau-yosida} and that $\tilde{b}_j\rightharpoonup^* \tilde{b}_*$ in $L^\infty(\Omega;\R^\dim)$ as $j\to \infty$, where $\tilde{b}_j\coloneqq \frac{\partial H_{\lambda_j}}{\partial p}[\nabla u_{\lambda_j}]$ and $\tilde{b}_* \in D_pH[u]$.}
{Also, by hypothesis, the sequence $\{\tilde{b}_j\}_{j\in\mathbb{N}}$ is pre-compact in $L^1(\Omega;\mathbb{R}^d)${, so} after possibly passing to a further subsequence, we may assume without loss of generality that $\tilde{b}_j\to \tilde{b}_*$ in $L^1(\Omega;\R^\dim)$ as $j\to \infty$.}
Since $|\Omega|_d<\infty$, we deduce from the dominated convergence theorem that $\tilde{b}_j\to\tilde{b}_*$ in $L^s(\Omega;\mathbb{R}^d)$ for any $s\in [1,\infty)$ as $j\to\infty$. 
Let $r\in (2,2^*)$ be given, where $2^*\coloneqq \infty$ if $d\in\{1,2\}$ and $2^*\coloneqq \frac{2d}{d-2}$ if $d\geq 3$. 
{The Sobolev embedding theorem shows} that $m\in L^r(\Omega)$. Therefore, if we set $s\coloneqq \frac{2r}{r-2}$, we deduce from the triangle inequality and H\"older's inequality that 
\begin{multline}\label{mb-conv-est}
	\lim_{j\to\infty}\norm{m_{\lambda_j}\tilde{b}_j-m\tilde{b}_*}_{L^2(\Omega;\R^\dim)}
	\\ \leq \lim_{j\to\infty}\sqrt{2}\left(L_H\|m_{\lambda_{j}}-m\|_{L^2(\Omega)}+\|m\|_{L^r(\Omega)}\norm{\tilde{b}_j-\tilde{b}_*}_{L^{s}(\Omega;\mathbb{R}^d)}\right) 
	=0.
\end{multline}
This shows that $m_{\lambda_{j}}\tilde{b}_j\to m\tilde{b}_*$ in $L^2(\Omega;\mathbb{R}^d)$ along a subsequence as $j\to\infty$.
{Therefore, after testing~\eqref{weakform2-space-time-moreau-yosida} with $m_{\lambda_j}$ and passing to the limit, we obtain}
\begin{equation}
	\begin{aligned}
		\lim_{j\to\infty}\|\nabla m_{\lambda_j}\|_{L^2(\Omega;\mathbb{R}^d)}^2  &= \nu^{-1}\lim_{j\to\infty}\left(\langle G, m_{\lambda_j}\rangle_{H^{-1}\times H_0^1} - \int_{\Omega}m_{\lambda_j}\tilde{b}_j\cdot\nabla m_{\lambda_j}\mathrm{d}x\right)
		\\ & = \nu^{-1}\left(\langle G, m\rangle_{H^{-1}\times H_0^1} -\int_{\Omega}m\tilde{b}_*\cdot\nabla m\mathrm{d}x\right)=\|\nabla m\|_{L^2(\Omega;\mathbb{R}^d)}^2,
	\end{aligned}
\end{equation}
where the last equality follows from~\eqref{weakform2} tested with $m$.
{This shows that $\nabla m_{\lambda_j}\to \nabla m$ in $L^2(\Omega;\R^\dim)$ and thus $m_{\lambda_j}\to m$ in $H^1_0(\Omega)$ as $j\to\infty$.}
\hfill\proofbox

\section{Proof of Theorem~\ref{theorem-rate-of-convergence-moreau-yosida}}\label{sec-pfs-of-main-result-2}
\subsection{Preliminary results}

{To begin, we introduce the pointwise maximising set corresponding to the Hamiltonian~\eqref{Hamiltonian}. Define the set-valued map $\Lambda\colon\Omega\times\mathbb{R}^d\rightrightarrows \mathcal{A}$ via
	\begin{equation}
		\Lambda(x,p)\coloneqq\text{argmax}_{\alpha\in\mathcal{A}}\{b(x,\alpha)\cdot p-f(x,\alpha)\}.
	\end{equation}
	Following \cite{osborne2022analysis}, we associate with given $v\in W^{1,1}(\Omega)$ the set $\Lambda[v]$ of Lebesgue measurable functions $\alpha^*:\Omega\to\mathcal{A}$ that satisfy $\alpha^*(x)\in\Lambda(x,\nabla v(x))$ for a.e.\ $x\in\Omega.$ We will refer to each element of $\Lambda[v]$ as \emph{a measurable selection of }$\Lambda(\cdot,\nabla v(\cdot))$. 
	It is known that~$\Lambda[v]$ is non-empty for all $v\in W^{1,1}(\Omega)$, a result which ultimately rests upon the Kuratowski--Ryll-Nardzewski theorem~\cite{kuratowski1965general} on measurable selections, see also~\cite[Appendix~B]{smears2015thesis} for a detailed proof.}

{We will make use of the following semismoothness result for the Hamiltonians, which was first shown in~\cite[Theorem~13]{smears2014discontinuous} already for the more general case of fully nonlinear second~order HJB operators.}
\begin{lemma}\label{lemma-semismooth-tech-result}
	Let $v\in H^1(\Omega)$ be given. For each $\epsilon>0$, there exists a $R>0$, depending only on $v$, $\epsilon$, $\Omega$, and $H$, such that 
	\begin{equation}\label{semismooth-bound-general}
		\sup_{\alpha\in {\Lambda[w]}}\left\|H[\nabla w] - H[\nabla v] - b(\cdot,\alpha)\cdot \nabla (w-v)\right\|_{H^{-1}(\Omega)} \leq \epsilon\|v - w\|_{H^1(\Omega)},
	\end{equation}
	whenever $w\in H^1(\Omega)$ satisfies $\|v- w\|_{H^1(\Omega)}\leq R$.
\end{lemma}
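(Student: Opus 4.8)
The plan is to reduce the claimed bound to a pointwise estimate and then use the equi-integrability of the gradients. The starting observation is the two-sided pointwise inequality: for a.e.\ $x\in\Omega$ and for any $\alpha\in\Lambda[w]$, since $\alpha(x)\in\Lambda(x,\nabla w(x))$ maximises the expression defining $H(x,\nabla w(x))$, one has $H(x,\nabla w(x)) = b(x,\alpha(x))\cdot\nabla w(x) - f(x,\alpha(x))$, whereas $H(x,\nabla v(x))\geq b(x,\alpha(x))\cdot\nabla v(x) - f(x,\alpha(x))$ because $H(x,\cdot)$ is the supremum over $\mathcal{A}$. Subtracting gives $H[\nabla w](x) - H[\nabla v](x) - b(x,\alpha(x))\cdot\nabla(w-v)(x) \leq 0$ a.e., and the reverse bound with constant comparable to $L_H|\nabla(w-v)(x)|$ follows from~\eqref{bounds:lipschitz}. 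Thus the integrand is nonnegative after adding $L_H|\nabla(w-v)|$ and is pointwise controlled; the subtlety is that the naive $H^{-1}$-to-$L^2$ estimate only gives $\|v-w\|_{H^1}$ without the small factor $\epsilon$, so a genuinely finer argument is needed.

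First I would fix $v\in H^1(\Omega)$ and $\epsilon>0$, and use the absolute continuity of the integral: since $|\nabla v|^2\in L^1(\Omega)$, there exists $\delta>0$ such that $\int_E |\nabla v|^2\,\mathrm{d}x$ is small (to be quantified) whenever $|E|_d\leq\delta$. Next, writing $r_{w,\alpha}\coloneqq H[\nabla w] - H[\nabla v] - b(\cdot,\alpha)\cdot\nabla(w-v)$, I would estimate $\|r_{w,\alpha}\|_{H^{-1}(\Omega)}$ by testing against $\phi\in H^1_0(\Omega)$ with $\|\phi\|_{H^1_0}\leq 1$, and split the domain into the set $S_w\coloneqq\{x: |\nabla w(x)-\nabla v(x)|\geq M\}$ and its complement, where $M>0$ is a threshold to be chosen. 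On $S_w$, Chebyshev gives $|S_w|_d\leq M^{-2}\|w-v\|_{H^1}^2$, which is $\leq M^{-2}R^2$; choosing $R$ (and hence this measure) small enough that $|S_w|_d\leq\delta$, the contribution from $S_w$ is bounded using the pointwise inequality by $\int_{S_w}(L_H|\nabla(w-v)| + \text{terms}\lesssim L_H|\nabla v|)\,|\phi|\,\mathrm{d}x$, and here one uses Cauchy--Schwarz together with the smallness of $\int_{S_w}|\nabla v|^2$ and of $|S_w|_d$ to absorb everything into $\epsilon\|v-w\|_{H^1}$ — this is where the semismoothness gain comes from, essentially because on the large-gradient-difference set both $\nabla v$ is equi-integrably small and the set itself is small, while on the complement the gradient difference is bounded so its $L^2$ norm is controlled by $\|w-v\|_{H^1}$ times a factor that one would like small.

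The genuinely delicate piece — and the part I expect to be the main obstacle — is handling the complement $\Omega\setminus S_w$, where $|\nabla(w-v)|<M$: here the pointwise residual is $O(L_H M)$ but pointwise in magnitude, not small in $H^{-1}$; one must instead exploit cancellation. The right approach is to note that as $w\to v$ in $H^1$ with $\|w-v\|_{H^1}\leq R\to 0$, for the residual to be small in $H^{-1}$ one integrates against $\phi$ and uses that $|r_{w,\alpha}| \leq 2L_H|\nabla(w-v)|$ pointwise everywhere (from~\eqref{bounds:lipschitz} applied to both $H$ and the linear term, since $|b|\leq L_H$), so that on $\Omega\setminus S_w$ one has $\|r_{w,\alpha}\mathbf{1}_{\Omega\setminus S_w}\|_{L^2(\Omega)}\leq 2L_H\|\nabla(w-v)\|_{L^2(\Omega)} = 2L_H\|w-v\|_{H^1}$ — which is still the wrong bound by the factor $\epsilon$. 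The correct resolution, following~\cite[Theorem~13]{smears2014discontinuous}, is a proof by contradiction: suppose the conclusion fails, so there is $\epsilon_0>0$, a sequence $w_n\to v$ in $H^1(\Omega)$, and selections $\alpha_n\in\Lambda[w_n]$ with $\|r_{w_n,\alpha_n}\|_{H^{-1}} > \epsilon_0\|v-w_n\|_{H^1}$; set $z_n\coloneqq (w_n-v)/\|w_n-v\|_{H^1}$ so $\|z_n\|_{H^1_0}=1$, extract $z_n\rightharpoonup z$ in $H^1_0$ and $z_n\to z$ in $L^2$ and a.e.\ (for a subsequence); then $\nabla w_n\to\nabla v$ a.e., so $\Lambda(x,\nabla w_n(x))$ converges (in the sense of outer semicontinuity of $\Lambda$, which holds since $b,f$ are continuous and the $\argmax$ is outer semicontinuous) towards $\Lambda(x,\nabla v(x))$, and one shows $\frac{1}{\|w_n-v\|_{H^1}}r_{w_n,\alpha_n}\rightharpoonup 0$ in $H^{-1}$ by passing to the limit in the weak form, using that along the subsequence $b(\cdot,\alpha_n)\cdot\nabla z_n \rightharpoonup b(\cdot,\alpha_\infty)\cdot\nabla z$ for some limiting selection and that the difference $H[\nabla w_n]-H[\nabla v]$, divided by the normalising factor, also converges weakly to the same limit because of the pointwise maximiser identity — contradicting the assumed lower bound. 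Assembling these pieces (the elementary two-sided pointwise inequality, the equi-integrability/Chebyshev split giving a quantitative $R$, and the contradiction argument handling the cancellation) yields the stated estimate; I would cite~\cite[Theorem~13]{smears2014discontinuous} for the core of the contradiction step and only spell out the adaptations needed for the present elliptic setting.
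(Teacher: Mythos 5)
Your setup is sound: the two-sided pointwise bound ($r_{w,\alpha}\leq 0$ by optimality of $\alpha$ at $\nabla w$, and $|r_{w,\alpha}|\leq 2L_H|\nabla(w-v)|$ by Lipschitz continuity), the observation that the naive $L^2\to H^{-1}$ estimate misses the factor $\epsilon$, and the Chebyshev split handling the set where $|\nabla(w-v)|$ is large (this is, in disguise, the same mechanism the paper uses: one gains on small sets because $\phi\in H^1_0$ has integrability better than $L^2$, equivalently $L^s(\Omega)\hookrightarrow H^{-1}(\Omega)$ for some $s<2$). The paper's own proof is short: it states the semismoothness estimate in $L^s(\Omega)$ for $s\in[1,2)$, citing the argument of \cite[Theorem~13]{smears2014discontinuous} adapted from the second-order to the first-order setting, and then concludes via the Sobolev embedding $L^s(\Omega)\hookrightarrow H^{-1}(\Omega)$. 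So your overall architecture is compatible with the paper's.

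However, your resolution of the piece you correctly identify as delicate contains a genuine gap. First, deriving that $r_{w_n,\alpha_n}/\|w_n-v\|_{H^1}\rightharpoonup 0$ \emph{weakly} in $H^{-1}(\Omega)$ does not contradict the assumed lower bound $\|r_{w_n,\alpha_n}\|_{H^{-1}}>\epsilon_0\|w_n-v\|_{H^1}$: weak convergence to zero is perfectly consistent with the norms staying bounded away from zero. To close the contradiction you need \emph{strong} $H^{-1}$-convergence, which one would obtain from boundedness of the normalized residual in $L^2$ together with the compactness of $L^2(\Omega)\hookrightarrow H^{-1}(\Omega)$ once the weak-$L^2$ limit is identified as zero --- but that identification is precisely where the work lies, and your proposed route to it does not work: a product $b(\cdot,\alpha_n)\cdot\nabla z_n$ of a weak-$*$ convergent $L^\infty$ sequence with a weakly convergent $L^2$ sequence need not converge weakly to the product of the limits, and the claim that $(H[\nabla w_n]-H[\nabla v])/\|w_n-v\|_{H^1}$ converges weakly to the same limit is unsupported (it is a difference quotient of a nondifferentiable function along a merely weakly convergent direction). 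The ingredient you are missing is the \emph{pointwise} semismoothness of the finite-dimensional convex map $p\mapsto H(x,p)$: for every $x$ and every $\xi\in\partial_pH(x,q)$ one has $0\leq H(x,p)-H(x,q)-\xi\cdot(p-q)\leq o(|p-q|)$ as $q\to p$, uniformly over such $\xi$ (a consequence of the outer semicontinuity of $\partial_pH(x,\cdot)$, or of the compactness of $\mathcal{A}$ and uniform continuity of $b,f$). Combined with the a.e.\ convergence $\nabla w_n\to\nabla v$ (available since $w_n\to v$ strongly in $H^1$, up to a subsequence), this yields $|r_{w_n,\alpha_n}(x)|\leq \sigma_n(x)\,|\nabla(w_n-v)(x)|$ with $0\leq\sigma_n\leq 2L_H$ and $\sigma_n\to 0$ a.e.; H\"older with exponents $2/s$ and $2/(2-s)$ and dominated convergence then give $\|r_{w_n,\alpha_n}\|_{L^s}=o(\|w_n-v\|_{H^1})$ for $s<2$, and the Sobolev embedding finishes. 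Without this pointwise ingredient, neither your Chebyshev split (which leaves the complement unresolved, as you note) nor your contradiction argument produces the required $\epsilon$-gain.
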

\begin{proof}
	{Following the analysis of~\cite[Theorem~13]{smears2014discontinuous} adapted to the current context, it is known that, for every $s\in [1,2)$ and for every $v\in H^1(\Omega)$, we have 
		\begin{equation}\label{eq:semismoothness_lebesgue_spaces}
			\lim_{\norm{e}_{H^1(\Omega)}\tends 0}\sup_{\alpha\in \Lambda[v+e]} \frac{1}{\norm{e}_{H^1(\Omega)}}\norm{H[\nabla (v+e)]-H[\nabla v]-b(\cdot,\alpha)\cdot \nabla e}_{L^s(\Omega)}=0.
		\end{equation}
		The proof of~\eqref{eq:semismoothness_lebesgue_spaces} follows the same ideas as in~\cite[Theorem~13]{smears2014discontinuous}, with the difference that \cite{smears2014discontinuous} treats the case of fully nonlinear second-order HJB operators in spaces of functions with piecewise second-order Sobolev regularity; whereas~\eqref{eq:semismoothness_lebesgue_spaces} considers the first order Sobolev space $H^1$ since the Hamiltonian here depends only on the gradient of the functions and not on the second derivatives.
		Note also that the proof of~\eqref{eq:semismoothness_lebesgue_spaces} uses mainly the boundedness of $\Omega$, the compactness of the control set $\mathcal{A}$, and the continuity of the data $b$ and $f$.
		We then obtain~\eqref{semismooth-bound-general} by taking $w=v+e$ and noting that there exists some $s\in[1,2)$ such that $L^s(\Omega)$ is continously embedded in $H^{-1}(\Omega)$ by the Sobolev embedding theorem.}
\end{proof}

\begin{remark}
	We emphasize that the semismoothness of $H$  shown in Lemma~\ref{lemma-semismooth-tech-result} is strictly weaker than differentiability of $H$. The key difference is that in~\eqref{semismooth-bound-general}, we have $\alpha\in \Lambda[w]$, instead of $\Lambda[v]$. In fact, the simple example $H(x,p)=\sup_{\alpha\in \mathcal{A}}\{\alpha\cdot p\}=|p|$, where $\mathcal{A}=\overline{B_1(0)}\subset\mathbb{R}^d$, shows that we cannot replace $\Lambda[w]$ by $\Lambda[v]$ in~\eqref{semismooth-bound-general} in general.
\end{remark}

{The semismoothness of the Hamiltonian enables us to show the following bound between the value functions of the original and regularized problems.}
\begin{lemma}\label{lemma-u-lam-approx-rate}
	{Assume the hypotheses~\ref{H:G-postive}, \ref{H:F-strong-mono-2}, \ref{H:F-Lipschitz-continuous}, and~\ref{H:reg-family}.
		Let $(u,m)$ and $(u_{\lambda},m_{\lambda})$ be the respective unique solutions of~\eqref{weakform} and~\eqref{weakform-moreau-yosida}.
		Then, for all $\lambda$ sufficiently small, we have
		\begin{equation}\label{hjb-reg-approx-error}
			\|u-u_{\lambda}\|_{H^1(\Omega)}\lesssim \|m-m_{\lambda}\|_{\mathcal{X}} + \omega(\lambda).
		\end{equation}
		The hidden constant in~\eqref{hjb-reg-approx-error} depends only on 
		$\Omega$, $\nu$, $d$, $L_H$, and $L_F$.}
\end{lemma}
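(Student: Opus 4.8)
The plan is to estimate $e\coloneqq u-u_\lambda$ by subtracting the two HJB equations~\eqref{weakform1} and~\eqref{weakform1-space-time-moreau-yosida}, linearizing the Hamiltonian difference by means of the semismoothness result of Lemma~\ref{lemma-semismooth-tech-result}, and then inverting the resulting linear elliptic operator via the uniform bound of Lemma~\ref{lemma-uniform-L-inv-bound}. First I would record a preliminary observation: since~\ref{H:G-postive} holds and~\ref{H:F-strong-mono-2} implies~\ref{H:F-strict-mono}, the solutions $(u,m)$ and $(u_\lambda,m_\lambda)$ are unique, so Theorem~\ref{theorem-convergence-moreau-yosida} gives $u_\lambda\to u$ in $H^1_0(\Omega)$ as $\lambda\to 0$; in particular $\norm{u-u_\lambda}_{H^1(\Omega)}$ is smaller than any prescribed threshold once $\lambda$ is small enough, which is precisely what permits the use of the \emph{local} semismoothness estimate.

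Subtracting the weak HJB equations, for every $\psi\in H^1_0(\Omega)$ one has
\[
	\int_\Omega \nu\nabla e\cdot\nabla\psi + \bigl(H[\nabla u]-H_\lambda[\nabla u_\lambda]\bigr)\psi\,\mathrm{d}x = \langle F[m]-F[m_\lambda],\psi\rangle_{H^{-1}\times H^1_0}.
\]
I would then split $H[\nabla u]-H_\lambda[\nabla u_\lambda] = \bigl(H[\nabla u]-H[\nabla u_\lambda]\bigr) + \bigl(H[\nabla u_\lambda]-H_\lambda[\nabla u_\lambda]\bigr)$. The second bracket is bounded in $L^2(\Omega)$, hence in $H^{-1}(\Omega)$, by $\lesssim\omega(\lambda)$ using the pointwise bound~\eqref{H-bounds:reg-unif-approx_1} and the boundedness of $\Omega$. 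For the first bracket, fix $\epsilon>0$ to be chosen later and apply Lemma~\ref{lemma-semismooth-tech-result} with $v=u$ and $w=u_\lambda$: provided $\lambda$ is small enough that $\norm{u-u_\lambda}_{H^1(\Omega)}\le R(u,\epsilon,\Omega,H)$, there is a measurable selection $\alpha\in\Lambda[u_\lambda]$ such that $H[\nabla u]-H[\nabla u_\lambda] = b(\cdot,\alpha)\cdot\nabla e - r_\lambda$ with $\norm{r_\lambda}_{H^{-1}(\Omega)}\le\epsilon\norm{e}_{H^1(\Omega)}$.

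Substituting this back, the left-hand side becomes $\langle Le,\psi\rangle_{H^{-1}\times H^1_0}$, where $L\in\mathcal{G}(L_H)$ is the operator of the form~\eqref{eq:wmp_1} associated with the vector field $b(\cdot,\alpha)$, which is measurable and satisfies $\norm{b(\cdot,\alpha)}_{L^\infty(\Omega;\R^\dim)}\le L_H$ by definition of $L_H$. Lemma~\ref{lemma-uniform-L-inv-bound} then gives $\norm{e}_{H^1(\Omega)}\le C_1$ times the $H^{-1}$-norm of the right-hand side, and collecting the three contributions (using~\ref{H:F-Lipschitz-continuous} for the coupling term) yields $\norm{e}_{H^1(\Omega)}\le C_1\bigl(L_F\norm{m-m_\lambda}_{\mathcal X} + \epsilon\norm{e}_{H^1(\Omega)} + C\,\omega(\lambda)\bigr)$, with $C_1$ and $C$ depending only on $\Omega$, $d$, $\nu$, $L_H$. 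Finally I would fix $\epsilon$ small enough, depending only on $C_1$ (hence only on $\Omega,d,\nu,L_H$), that $\epsilon C_1\le\tfrac12$, absorb the $\norm{e}_{H^1(\Omega)}$ term on the left, and arrive at~\eqref{hjb-reg-approx-error}.

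The main obstacle — and the reason the bound needs no additional regularity of $u$ — lies in the linearization step: since $H$ need not be differentiable, one cannot linearize directly around $u$, and the semismoothness Lemma~\ref{lemma-semismooth-tech-result} furnishes a good linear surrogate $b(\cdot,\alpha)\cdot\nabla e$ only when $\alpha$ belongs to the maximizing set $\Lambda[u_\lambda]$ of the \emph{perturbed} function rather than of $u$; one must then verify that the resulting coefficient is admissible for $\mathcal{G}(L_H)$ so that the uniform inversion bound applies. A secondary but genuine point is the bookkeeping of constants: the absorption threshold $\epsilon$ must be pinned down purely in terms of $C_1$ so that the final hidden constant depends only on the quantities listed in the statement, whereas the (solution-dependent) smallness requirement on $\lambda$, coming from $R(u,\epsilon,\Omega,H)$ and from the $H^1$-convergence $u_\lambda\to u$, is harmless since the conclusion is only claimed for $\lambda$ sufficiently small.
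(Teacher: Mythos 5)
Your proposal is correct and follows essentially the same route as the paper's proof: subtract the two weak HJB equations, linearize the Hamiltonian increment via the semismoothness Lemma~\ref{lemma-semismooth-tech-result} with a selection $\alpha\in\Lambda[u_\lambda]$ (made applicable by the strong $H^1$-convergence $u_\lambda\to u$ from Theorem~\ref{theorem-convergence-moreau-yosida} under uniqueness), control the regularization error by $\omega(\lambda)$ via~\eqref{H-bounds:reg-unif-approx_1}, invert the resulting operator in $\mathcal{G}(L_H)$ using Lemma~\ref{lemma-uniform-L-inv-bound}, and absorb the $\epsilon\norm{e}_{H^1(\Omega)}$ term. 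The bookkeeping of the constants and of the solution-dependent smallness threshold on $\lambda$ is also handled exactly as in the paper.
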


\begin{proof}
	{For each $\lambda\in (0,1]$, choose an arbitrary $\alpha_{\lambda}\in \Lambda[u_\lambda]$, and define the operator $L_\lambda:H^1(\Omega)\to H^{-1}(\Omega)$ by 
		\begin{equation}
			\langle L_{\lambda}w,v\rangle_{H^{-1}\times H_0^1}\coloneqq \int_{\Omega}\nu\nabla w\cdot\nabla v + b(x,\alpha_\lambda)\cdot\nabla wv\mathrm{d}x\quad \forall w,v\in H^1(\Omega).
		\end{equation}
		Recalling the definition of the set of operators $\mathcal{G}(L_H)$, it is clear that $L_{\lambda}\in\mathcal{G}(L_H)$ since it is of the form~\eqref{eq:wmp_1} and $\norm{b(\cdot,\alpha_\lambda)}_{L^\infty(\Omega;\R^\dim)}\leq \norm{b}_{C(\overline{\Omega}\times\mathcal{A};\R^\dim)}\leq L_H$.
		We then obtain from~\eqref{weakform1} and from~\eqref{weakform1-space-time-moreau-yosida} that
		\begin{multline}
			\langle L_\lambda(u_\lambda-u),\psi\rangle_{H^{-1}\times H_0^1}
			=\langle F[m_\lambda] - F[m],\psi\rangle_{H^{-1}\times H_0^1} + \int_{\Omega}(H[\nabla u_{\lambda}] - H_\lambda[\nabla u_{\lambda}])\psi\mathrm{d}x
			\\
			- \int_{\Omega}(H[\nabla u_{\lambda}] - H[\nabla u] - b(\cdot,\alpha_{\lambda})\cdot \nabla (u_{\lambda}-u))\psi\mathrm{d}x
		\end{multline}
		Using Lemma~\ref{lemma-uniform-L-inv-bound}, we deduce that 
		\begin{multline}\label{eq:u_lam_rate_1}
			\|u-u_{\lambda}\|_{H^1(\Omega)}\leq C_* \left(\|F[m] - F[m_{\lambda}]\|_{H^{-1}(\Omega)} + \|H_{\lambda}[\nabla u_{\lambda}] - H[\nabla u_{\lambda}]\|_{L^2(\Omega)} 
			\right.\\
			\left.\quad\quad\quad+ \|H[\nabla u] - H[\nabla u_{\lambda}] - b(\cdot,\alpha_{\lambda})\cdot \nabla (u-u_{\lambda})\|_{H^{-1}(\Omega)}\right),
		\end{multline} 
		where $C_*> 0$ depends on only $\Omega$, $\nu$, $d$ and  $L_H$.
		Theorem~\ref{theorem-convergence-moreau-yosida}, in particular the strong convergence $u_\lambda \to u$ in $H^1_0(\Omega)$ as $\lambda\to 0$, and Lemma~\ref{lemma-semismooth-tech-result} together imply that, for all $\lambda$ sufficiently small,
		\begin{equation}\label{eq:u_lam_rate_2}
			\|H[\nabla u_\lambda] - H[\nabla u] - b(\cdot,\alpha_\lambda)\cdot \nabla (u_\lambda-u)\|_{H^{-1}(\Omega)}\leq \frac{1}{2C_*}\|u-u_{\lambda}\|_{H^1(\Omega)},
		\end{equation}
		where $C_*$ is the constant from~\eqref{eq:u_lam_rate_1}.
		After combining~\eqref{eq:u_lam_rate_1} with the hypothesis~\ref{H:F-Lipschitz-continuous} and with~\eqref{H-bounds:reg-unif-approx_1}, and with~\eqref{eq:u_lam_rate_2}, we see that
		\begin{equation} 
			\begin{split}
				\|u-u_{\lambda}\|_{H^1(\Omega)}&\lesssim\|F[m] - F[m_{\lambda}]\|_{H^{-1}(\Omega)} + \norm{H[\nabla u_\lambda]-H_\lambda[\nabla u_\lambda]}_{L^2(\Omega)} \\
				&\lesssim \norm{m-m_{\lambda}}_{\mathcal{X}}+\omega(\lambda),
			\end{split}
		\end{equation}
		for all $\lambda$ sufficiently small.
		The hidden constant in the above bounds depends only on $\Omega$, $\nu$, $d$, and $L_H$ and $L_F$.
		This completes the proof of~\eqref{hjb-reg-approx-error}.}
\end{proof}

\begin{lemma}\label{lemma-m-lam-approx-rate}
	Assume the hypotheses~\ref{H:G-postive}, \ref{H:F-strong-mono-2}, \ref{H:F-Lipschitz-continuous}, and~\ref{H:reg-family}.
	{Let $(u,m)$ and $(u_{\lambda},m_{\lambda})$, $\lambda\in(0,1]$, be the respective unique solutions of~\eqref{weakform} and~\eqref{weakform-moreau-yosida}.}
	Then, for all $\lambda \in (0,1]$,
	\begin{equation}
		\|m - m_{\lambda}\|_{\mathcal{X}}\lesssim \|G\|_{H^{-1}(\Omega)}^{\frac{1}{2}}\omega(\lambda)^{\frac{1}{2}}.
	\end{equation}
	where the hidden constant depends only on $c_F$, $d$, $\nu$, $L_H$ and $\Omega$.
\end{lemma}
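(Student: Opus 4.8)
The plan is to run the Lasry--Lions monotonicity argument, using the convexity of $H$ and of each $H_\lambda$ together with the non-negativity of the densities. First I would test the difference of the HJB equations~\eqref{weakform1} and~\eqref{weakform1-space-time-moreau-yosida} with $\psi = m - m_\lambda \in H_0^1(\Omega)$, and test the difference of the KFP equations~\eqref{weakform2} and~\eqref{weakform2-space-time-moreau-yosida} with $\phi = u - u_\lambda \in H_0^1(\Omega)$. Writing $\tilde{b}_* \in D_pH[u]$ for the transport field of the PDI and $\tilde{b}_\lambda \coloneqq \frac{\partial H_\lambda}{\partial p}[\nabla u_\lambda]$ for that of the regularized problem, and subtracting, the symmetric Dirichlet terms $\nu\int_\Omega \nabla(u-u_\lambda)\cdot\nabla(m-m_\lambda)\,\mathrm{d}x$ cancel and one is left with
\begin{equation*}
	\int_\Omega (H[\nabla u] - H_\lambda[\nabla u_\lambda])(m - m_\lambda)\,\mathrm{d}x - \int_\Omega (m\tilde{b}_* - m_\lambda\tilde{b}_\lambda)\cdot\nabla(u-u_\lambda)\,\mathrm{d}x = \langle F[m] - F[m_\lambda], m - m_\lambda\rangle_{H^{-1}\times H_0^1}.
\end{equation*}
By the strong monotonicity hypothesis~\ref{H:F-strong-mono-2}, the right-hand side is at least $c_F\|m - m_\lambda\|_{\mathcal{X}}^2$, so it remains to bound the left-hand side above by a multiple of $\omega(\lambda)$.

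The key step is to regroup the left-hand side by collecting the terms multiplied by $m$ and by $m_\lambda$ separately, which gives
\begin{equation*}
	\text{LHS} = \int_\Omega m\,\big(H[\nabla u] - H_\lambda[\nabla u_\lambda] - \tilde{b}_*\cdot\nabla(u-u_\lambda)\big)\,\mathrm{d}x - \int_\Omega m_\lambda\,\big(H[\nabla u] - H_\lambda[\nabla u_\lambda] - \tilde{b}_\lambda\cdot\nabla(u-u_\lambda)\big)\,\mathrm{d}x.
\end{equation*}
For the first integrand I would use $H_\lambda[\nabla u_\lambda] \geq H[\nabla u_\lambda] - \omega(\lambda)$ from~\eqref{H-bounds:reg-unif-approx_1} together with the pointwise subdifferential inequality $H(x,\nabla u) - H(x,\nabla u_\lambda) \leq \tilde{b}_*(x)\cdot(\nabla u(x) - \nabla u_\lambda(x))$, which holds a.e.\ because $\tilde{b}_*(x)\in \partial_p H(x,\nabla u(x))$; this shows that the first integrand is $\leq \omega(\lambda)$ a.e.\ in $\Omega$. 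Symmetrically, for the second integrand I would use $H[\nabla u] \geq H_\lambda[\nabla u] - \omega(\lambda)$ together with $H_\lambda(x,\nabla u) - H_\lambda(x,\nabla u_\lambda) \geq \tilde{b}_\lambda(x)\cdot(\nabla u(x) - \nabla u_\lambda(x))$, which holds since $\tilde{b}_\lambda(x) = \frac{\partial H_\lambda}{\partial p}(x,\nabla u_\lambda(x)) \in \partial_p H_\lambda(x,\nabla u_\lambda(x))$; this shows that the second integrand is $\geq -\omega(\lambda)$ a.e.

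At this point the non-negativity of the densities is essential: under~\ref{H:G-postive}, the Comparison Principle for the operators in $\mathcal{G}(L_H)$ recorded after Lemma~\ref{lemma-uniform-L-inv-bound} gives $m \geq 0$ and $m_\lambda \geq 0$ a.e.\ in $\Omega$. Combining this with the bounds on the two integrands yields $c_F\|m-m_\lambda\|_{\mathcal{X}}^2 \leq \omega(\lambda)\big(\|m\|_{L^1(\Omega)} + \|m_\lambda\|_{L^1(\Omega)}\big)$. Finally, since $m$ and $m_\lambda$ both solve KFP equations with data $G$ and transport fields bounded by $L_H$ in $L^\infty$, Lemma~\ref{lemma-uniform-L-inv-bound} gives $\|m\|_{H^1(\Omega)} + \|m_\lambda\|_{H^1(\Omega)} \lesssim \|G\|_{H^{-1}(\Omega)}$, and since $\Omega$ is bounded the Sobolev embedding gives $\|m\|_{L^1(\Omega)} + \|m_\lambda\|_{L^1(\Omega)} \lesssim \|G\|_{H^{-1}(\Omega)}$; taking square roots finishes the proof, with hidden constant depending only on $c_F$, $d$, $\nu$, $L_H$, and $\Omega$. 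I expect the main subtlety to be the sign bookkeeping: the two subdifferential inequalities are anchored at different points ($\nabla u$ for $H$, $\nabla u_\lambda$ for $H_\lambda$) and must be paired with the correct density ($m$ versus $m_\lambda$), while the gap between $H$ and $H_\lambda$ has to be bridged via $\omega(\lambda)$ in the appropriate direction in each of the two terms; everything else is routine.
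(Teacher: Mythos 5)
Your proposal is correct and follows essentially the same route as the paper's proof: the cross-testing of the HJB and KFP differences, the regrouping into one integral weighted by $m$ (controlled via $\tilde{b}_*\in\partial_pH(\cdot,\nabla u)$) and one weighted by $m_\lambda$ (controlled via convexity of $H_\lambda$ at $\nabla u_\lambda$), the nonnegativity of the densities from~\ref{H:G-postive} and the Comparison Principle, the uniform bound~\eqref{H-bounds:reg-unif-approx_1}, and the a priori bound on the densities from Lemma~\ref{lemma-uniform-L-inv-bound}. The only cosmetic difference is that you insert the $\omega(\lambda)$ correction before applying the subdifferential inequalities and measure the densities in $L^1$ rather than $L^2$; both variants yield the stated estimate with the same constant dependencies.
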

\begin{proof}
	Fix $\lambda\in (0,1]$. Test both~\eqref{weakform1} and~\eqref{weakform1-space-time-moreau-yosida} with $\psi = m - m_{\lambda}$ {and} subtract the resulting equations to obtain
	\begin{equation}\label{u-eqn-diff}
		\begin{split}
			&\langle F[m] - F[m_{\lambda}],m - m_{\lambda}\rangle_{H^{-1}\times H_0^1}
			\\&=\int_{\Omega}\nu\nabla (u- u_{\lambda})\cdot \nabla (m - m_{\lambda})+m(H[\nabla u] - H_{\lambda}[\nabla u_{\lambda}])+m_{\lambda}(H_{\lambda}[\nabla u_{\lambda}] - H[\nabla u])\mathrm{d}x. 
		\end{split}
	\end{equation} 
	Then, test both~\eqref{weakform2} and~\eqref{weakform2-space-time-moreau-yosida} with $\phi = u - u_{\lambda}$, and subtract the resulting equations to get 
	\begin{equation}\label{m-eqn-diff}
		\begin{split}
			&\int_{\Omega}\nu\nabla (u- u_{\lambda})\cdot\nabla (m - m_{\lambda}) 
			+mb_*\cdot\nabla (u - u_{\lambda}) + m_{\lambda}\frac{\partial H_{\lambda}}{\partial p}[\nabla u_{\lambda}]\cdot\nabla (u_{\lambda} - u)\mathrm{d}x= 0. 
		\end{split}
	\end{equation} 
	We then subtract~\eqref{m-eqn-diff} from~\eqref{u-eqn-diff} to thus obtain
	\begin{equation}
		\begin{split}
			&\langle F[m] - F[m_{\lambda}],m - m_{\lambda}\rangle_{H^{-1}\times H_0^1}
			\\
			&\quad\quad\quad\quad\quad\quad=\int_{\Omega}m_{\lambda}\left(H_{\lambda}[\nabla u_{\lambda}] - H[\nabla u]  + \frac{\partial H_{\lambda}}{\partial p}[\nabla u_{\lambda}]\cdot\nabla (u - u_{\lambda})\right)\mathrm{d}x 
			\\
			&\quad\quad\quad\quad\quad\quad\quad\quad+\int_{\Omega} m\left(H[\nabla u] - H_{\lambda}[\nabla u_{\lambda}] + b_*\cdot\nabla (u_{\lambda} - u)\right) \mathrm{d}x.
		\end{split}
	\end{equation} 
	Notice that the hypothesis~\ref{H:G-postive} on $G$, together with the Weak Maximum Principle and the Comparison Principle, implies that $m$ and $m_{\lambda}$ are both nonnegative a.e.\ in $\Omega$. This fact, together with the convexity of $H_{\lambda}$ w.r.t.\ $p$ and the definition of the inclusion $\tilde{b}_*\in D_pH[u]$, we deduce that
	\begin{equation*}
			\int_{\Omega}m_{\lambda}\left(H_{\lambda}[\nabla u_{\lambda}] - H[\nabla u]  + \frac{\partial H_{\lambda}}{\partial p}[\nabla u_{\lambda}]\cdot\nabla (u - u_{\lambda})\right)\mathrm{d}x \leq  \int_{\Omega}m_{\lambda}(H_{\lambda}[\nabla u]- H[\nabla u] )\mathrm{d}x,
	\end{equation*}
	and 
	$$\int_{\Omega} m\left(H[\nabla u] - H_{\lambda}[\nabla u_{\lambda}] + b_*\cdot\nabla (u_{\lambda} - u)\right) \mathrm{d}x\leq \int_{\Omega}m(H[\nabla u_{\lambda}] - H_{\lambda}[\nabla u_{\lambda}])\mathrm{d}x.$$
	Consequently,
	\begin{equation}\label{eq:m-lam-approx_1}
		\begin{split}
			\langle F[m] - F[m_{\lambda}],m - m_{\lambda}\rangle_{H^{-1}\times H_0^1}
			\leq 
			&\int_{\Omega}m_{\lambda}(H_{\lambda}[\nabla u]- H[\nabla u] )\mathrm{d}x
			\\&\quad\quad\quad\quad+\int_{\Omega}m(H[\nabla u_{\lambda}] - H_{\lambda}[\nabla u_{\lambda}])\mathrm{d}x.
		\end{split}
	\end{equation}
	{We then combine~\eqref{eq:m-lam-approx_1} with the strong monotonicity of $F$~\ref{H:F-strong-mono-2}, the uniform bound $|H_\lambda-H|\leq \omega(\lambda)$ of~\eqref{H-bounds:reg-unif-approx_1}, and the Cauchy--Schwarz inequality to obtain
		\begin{equation}\label{eq:m-lam-approx_2}
			\norm{m-m_{\lambda}}_\mathcal{X}^2 \leq c_F^{-1}\langle F[m] - F[m_{\lambda}],m - m_{\lambda}\rangle_{H^{-1}\times H_0^1} \lesssim
			\left(\|m_{\lambda}\|_{L^2(\Omega)}+\|m\|_{L^2(\Omega)}\right)\omega(\lambda),
		\end{equation}
		where the hidden constant depends only on $c_F$ and $\Omega$.}
	Using Lemma~\ref{lemma-uniform-L-inv-bound}, we deduce from the KFP equations satisfied by $m$ and $m_{\lambda}$, respectively, that $ \|m\|_{L^2(\Omega)}\lesssim\|G\|_{H^{-1}(\Omega)}$ and $ \|m_{\lambda}\|_{L^2(\Omega)}\lesssim\|G\|_{H^{-1}(\Omega)}$. Therefore, 
	\begin{equation}
		\|m - m_{\lambda}\|_{\mathcal{X}}^{2}{\lesssim \|G\|_{H^{-1}(\Omega)}\omega(\lambda),}
	\end{equation}
	{where the constant depends only on $\Omega$, $\dim$, $\nu$, $c_F$, and $L_H$.}
	This completes the proof as $\lambda\in (0,1]$ was arbitrary.
\end{proof}

The proof of Theorem~\ref{theorem-rate-of-convergence-moreau-yosida} is now straightforward.
\begin{proof}[Proof of Theorem~\ref{theorem-rate-of-convergence-moreau-yosida}]
	{The result is immediate by combining~the conclusions of Lemmas~\ref{lemma-u-lam-approx-rate} and~\ref{lemma-m-lam-approx-rate}, i.e.
		\begin{equation}\label{eq:proof_m_lam_rate}
			\norm{u-u_\lambda}_{H^1(\Omega)}+\norm{m-m_\lambda}_{\mathcal{X}}\lesssim \norm{m-m_\lambda}_{\mathcal{X}}+\omega(\lambda)\lesssim \omega(\lambda)^{\frac{1}{2}},
		\end{equation}
		where we have used the trivial bound $\omega(\lambda)\leq C_\omega \omega(\lambda)^{\frac{1}{2}}$ where $C_\omega\coloneqq \sup_{\sigma\in[0,1]}\omega(\sigma)^{\frac{1}{2}}$.
		Note that the constant in~\eqref{eq:proof_m_lam_rate} depends only on $\Omega$, $\dim$, $\nu$, $L_H$, $L_F$, $c_F$ and also $\sup_{\sigma\in[0,1]}\omega(\lambda)$.}
\end{proof}

\section{Examples}\label{sec-examples}
{In this section, we show the sharpness of the conclusions of the analysis above with respect to several aspects.
	Recall that Theorem~\ref{theorem-convergence-moreau-yosida} shows the weak convergence in $H^1$ of subsequences of the densities of the regularized problems to a density of the MFG PDI.
	Also, Corollary~\ref{corollary-strong-H1-conv-density-1} gives strong convergence in $H^1$ under an additional hypothesis.
	To show that such an additional hypothesis cannot be removed in general, in Section~\ref{sec-eg-4} below we give an example where the density function approximations given by the regularized problems do not converge strongly in the $H^1$-norm.
	Theorem~\ref{theorem-convergence-moreau-yosida} is sharp in this regard.
	
	In the second example, given in Section~\ref{sec-eg-3}, we consider a situation where each regularized problem has a unique solution pair $(u_{\lambda_j},m_{\lambda_j})$, yet by taking different subsequences, we arrive at different solutions of the MFG PDI. 
	Thus the convergence along subsequences shown in Theorem~\ref{theorem-convergence-moreau-yosida} cannot be improved to convergence of the whole sequence, even in cases where each regularized problem has a unique solution.}

\subsection{Density function approximations may not converge strongly in the $H^1$-norm}\label{sec-eg-4}
{We now present an example where the densities of the regularized problems do not converge strongly in the $H^1$-norm.}  This shows that, {for the general class of regularized Hamiltonians satisying \ref{H:reg-family}}, the weak convergence in the $H^1$-norm of  $\{m_{\lambda_j}\}_{j\in\mathbb{N}}$ given in Theorem~\ref{theorem-convergence-moreau-yosida} is sharp {in general.}
The {key features of the} example are summarized in the following proposition.
\begin{proposition}\label{prop-eg-3}
	Let $\Omega {=} (0,1)\subset\mathbb{R}$, and let $H:\overline{\Omega}\times\mathbb{R}\to \mathbb{R}$ be given by {$H(x,p)= \sup_{\alpha\in [-1,1]}\{x\alpha p\}= x|p|$}, for all $(x,p)\in \overline{\Omega}\times\mathbb{R}$. Set $F\equiv 0$ on $L^2(\Omega)$ and let $G\equiv 1\in L^2(\Omega)\subset H^{-1}(\Omega)$. 
	There exists a $m\in H^1_0(\Omega)$ and a family of regularizations $\{H_{\lambda_j}\}_{j\in\N}$ satisfying~\ref{H:reg-family}, with $\lambda_j\to 0$ as $j\to\infty$, such that 
	\begin{itemize}
		\item $(0,m)\in H_0^1(\Omega)\times H_0^1(\Omega)$ {is a solution of}~\eqref{weakform} with $m\neq 0$ in $H_0^1(\Omega)$,
		\item $m_{\lambda_j}\rightharpoonup m $ in $H_0^1(\Omega)$ as $j\to\infty$, and
		\begin{equation}\label{eq:eg-4-noconv}
			\lim_{j\to\infty}\|m_{\lambda_j}-m\|_{H^1(\Omega)}>0.
		\end{equation}
	\end{itemize}
\end{proposition}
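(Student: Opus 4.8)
\emph{The limiting pair.} The plan is to take as limit the pair $(u,m)=(0,m)$ with $m(x)\coloneqq\frac{x(1-x)}{2\nu}$. One checks immediately that $u\equiv 0$ solves the HJB equation~\eqref{weakform1}, since $H(x,0)=0$ and $F\equiv 0$; and since $0\in\partial_pH(x,0)=[-x,x]$ for every $x\in(0,1)$, the zero field $\tilde b_*\equiv 0$ lies in $D_pH[0]$ and makes $(0,m)$ solve~\eqref{weakform2} with $G\equiv 1$, the latter being just $-\nu m''=1$ with homogeneous Dirichlet data, whence the stated $m$, which is not identically zero. Note that~\ref{H:G-postive} holds but~\ref{H:F-strict-mono} fails for $F\equiv 0$, consistent with the nonuniqueness exploited here. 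The remaining task is to construct a family $\{H_{\lambda_j}\}$ satisfying~\ref{H:reg-family} for which the regularized densities $m_{\lambda_j}$ converge weakly but not strongly in $H^1_0(\Omega)$ to this $m$; the underlying idea is to choose regularizations that vanish at $p=0$ (so that $u_{\lambda_j}\equiv 0$ is forced) but whose one-sided slope at $p=0$ oscillates in $x$, so the transport fields $\frac{\partial H_{\lambda_j}}{\partial p}[\nabla u_{\lambda_j}]=\frac{\partial H_{\lambda_j}}{\partial p}(\cdot,0)$ converge only weakly-$*$, not strongly, in $L^\infty(\Omega;\mathbb{R})$.

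\emph{Construction of the regularizations.} I would set $\lambda_j\coloneqq 1/j$ and choose the oscillating tilt $\beta_j(x)\coloneqq\frac x2\sin(2\pi j x)$, so that $\beta_j(0)=0$ and $|\beta_j(x)|\le\frac x2<x$ on $(0,1]$. First define the convex, piecewise-affine function
\[
 g_j(x,p)\coloneqq\max\bigl\{\,xp-\lambda_j,\ -xp-\lambda_j,\ \beta_j(x)\,p\,\bigr\},\qquad(x,p)\in\overline\Omega\times\mathbb{R},
\]
and then let $H_{\lambda_j}(x,\cdot)$ be the mollification of $g_j(x,\cdot)$ in the $p$-variable at scale $\lambda_j/3$, using an even mollifier. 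The routine verifications are: (i) $g_j$ is jointly continuous, convex in $p$, and $L_H$-Lipschitz in $p$, since all its slopes lie in $[-x,x]\subset[-1,1]$; (ii) $\|g_j(x,\cdot)-x|\cdot|\|_{L^\infty(\mathbb{R})}\le\lambda_j$ uniformly in $x$, and mollification preserves this up to $O(\lambda_j)$; (iii) the affine branch $p\mapsto\beta_j(x)p$ is active on an interval about $0$ whose half-length is at least $\frac{2\lambda_j}{3}$ uniformly in $x\in(0,1]$, so that $H_{\lambda_j}(x,p)=\beta_j(x)p$ for $|p|\le\lambda_j/3$; in particular $H_{\lambda_j}(x,0)=0$ and $\frac{\partial H_{\lambda_j}}{\partial p}(x,0)=\beta_j(x)$; (iv) mollification preserves convexity and the Lipschitz bound and makes $\frac{\partial H_{\lambda_j}}{\partial p}$ continuous on $\overline\Omega\times\mathbb{R}$, continuity in $x$ (including at $x=0$, where $H_{\lambda_j}(0,\cdot)\equiv 0$) following because the breakpoints and slopes of $g_j$ depend continuously on $x$ and the breakpoints never collide. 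This gives~\ref{H:reg-family} with $\omega(\lambda)=\frac43\lambda$; the same formula with $2\pi\lceil1/\lambda\rceil$ in place of $2\pi j$ extends it to a family over all $\lambda\in(0,1]$ if desired.

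\emph{Identifying the limit.} Since $H_{\lambda_j}(x,0)=0=F[\cdot]$, the function $u_{\lambda_j}\equiv 0$ is the unique solution of the regularized HJB equation~\eqref{weakform1-space-time-moreau-yosida} by Lemma~\ref{lemma-HJB-continuous-moreau-yosida}, so $(0,m_{\lambda_j})$ is the unique weak solution of~\eqref{weakform-moreau-yosida}, where $m_{\lambda_j}$ solves the \emph{linear} problem $\int_0^1\nu m_{\lambda_j}'\phi'+m_{\lambda_j}\beta_j\phi'\,\mathrm{d}x=\int_0^1\phi\,\mathrm{d}x$ for all $\phi\in H^1_0(\Omega)$. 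Lemma~\ref{lemma-uniform-L-inv-bound} gives $\sup_j\|m_{\lambda_j}\|_{H^1(\Omega)}\lesssim\|G\|_{H^{-1}(\Omega)}$, so along any weakly-$H^1$-convergent subsequence $m_{\lambda_j}\to m$ strongly in $L^2(\Omega)$ (and in $C(\overline\Omega)$, as $d=1$). Since $\beta_j\rightharpoonup^* 0$ in $L^\infty(\Omega)$ by the Riemann--Lebesgue lemma, splitting $m_{\lambda_j}\beta_j=(m_{\lambda_j}-m)\beta_j+m\beta_j$ (first term strongly null in $L^2$, second weakly null) shows $m_{\lambda_j}\beta_j\rightharpoonup 0$ in $L^2(\Omega)$; passing to the limit in the weak form yields $-\nu m''=1$, hence the limit is the stated $m$ and, by uniqueness, the \emph{whole} sequence satisfies $m_{\lambda_j}\rightharpoonup m$ in $H^1_0(\Omega)$. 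This establishes the first two assertions; alternatively one may invoke Theorem~\ref{theorem-convergence-moreau-yosida} directly.

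\emph{Failure of strong convergence.} In dimension one, integrating the KFP equations for $m_{\lambda_j}$ and for $m$ once gives $\nu m_{\lambda_j}'+m_{\lambda_j}\beta_j=c_j-x$ a.e., with $c_j=\frac12+\int_0^1 m_{\lambda_j}\beta_j\to\frac12$, and $\nu m'=\frac12-x$; subtracting, $\nu(m_{\lambda_j}'-m')=(c_j-\tfrac12)-m_{\lambda_j}\beta_j$. Since $m_{\lambda_j}\to m$ in $L^2(\Omega)$ and $\|\beta_j\|_{L^\infty}\le\frac12$, we get $m_{\lambda_j}\beta_j-m\beta_j\to 0$ in $L^2(\Omega)$, while $\|m\beta_j\|_{L^2(\Omega)}^2=\frac14\int_0^1 m(x)^2x^2\sin^2(2\pi j x)\,\mathrm{d}x\to\frac18\int_0^1 m(x)^2x^2\,\mathrm{d}x>0$, using $\sin^2(2\pi j\,\cdot)\rightharpoonup^*\frac12$ and $m\not\equiv0$. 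Hence $\|m_{\lambda_j}'-m'\|_{L^2(\Omega)}^2\to\frac1{8\nu^2}\int_0^1 m(x)^2x^2\,\mathrm{d}x>0$, and since $\|m_{\lambda_j}-m\|_{H^1(\Omega)}^2=\|m_{\lambda_j}-m\|_{L^2(\Omega)}^2+\|m_{\lambda_j}'-m'\|_{L^2(\Omega)}^2$ with the first term vanishing, \eqref{eq:eg-4-noconv} follows, the left-hand limit in fact equalling $\frac1{8\nu^2}\int_0^1 m(x)^2x^2\,\mathrm{d}x$. The main obstacle is the first step: producing a single family that simultaneously respects all of~\ref{H:reg-family} (convexity, $C^1$-regularity, the \emph{uniform} Lipschitz constant $L_H$, and uniform closeness to $H$) while arranging the one-sided slope at $p=0$ to oscillate in $x$ in a way that persists as only weak-$*$, and not strong, convergence; everything afterwards is a short computation powered by Riemann--Lebesgue and $\sin^2\rightharpoonup\frac12$.
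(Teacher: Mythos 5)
Your proof is correct, and it shares the strategic skeleton of the paper's own argument: arrange $H_{\lambda_j}(x,0)=0$ so that $u_{\lambda_j}\equiv 0$ is forced (via Lemma~\ref{lemma-HJB-continuous-moreau-yosida}), and make the drift $\frac{\partial H_{\lambda_j}}{\partial p}(\cdot,0)$ oscillate in $x$ so that it converges only weakly-$*$, which then propagates to the gradients of the densities. The two key technical steps are, however, executed differently. For the regularizing family, the paper simply shifts the Moreau--Yosida regularization of $H$ in the $p$-variable by $x\cos(x\lambda^{-1})\lambda$ (and subtracts a constant), so that all of~\ref{H:reg-family} follows at once from Lemma~\ref{lemma-moreau-yosida-approx-properties} and the drift at $p=0$ is $-x\cos(x/\lambda)$; you instead build the piecewise-affine envelope $\max\{xp-\lambda_j,\,-xp-\lambda_j,\,\beta_j(x)p\}$ with oscillating central slope $\beta_j(x)=\tfrac{x}{2}\sin(2\pi jx)$ and then mollify, which is a valid construction but requires the longer verifications (i)--(iv) that you only sketch; the one worth writing out is the joint continuity of $\frac{\partial H_{\lambda_j}}{\partial p}$ on $\overline{\Omega}\times\R$, which is most easily obtained not from tracking breakpoints but from $\frac{\partial H_{\lambda_j}}{\partial p}(x,p)=\int_{\R}\partial_p\bigl[\rho_{\lambda_j/3}(p-q)\bigr]g_j(x,q)\,\mathrm{d}q$ with a $C^1$ kernel and dominated convergence. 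For the failure of strong convergence, the paper solves the one-dimensional KFP equation explicitly by an integrating factor and computes the limit of $\|\partial_x(m_{\lambda_j}-m)\|_{L^2(\Omega)}^2$ from the explicit formula, whereas you integrate the equation once to get $\nu(m_{\lambda_j}'-m')=(c_j-\tfrac12)-m_{\lambda_j}\beta_j$ and conclude from $m_{\lambda_j}\to m$ in $L^2(\Omega)$, the Riemann--Lebesgue lemma, and $\sin^2(2\pi j\,\cdot)\rightharpoonup^*\tfrac12$; this is a cleaner route to~\eqref{eq:eg-4-noconv} that avoids the explicit solution formula. Both versions yield a strictly positive limit for $\|m_{\lambda_j}-m\|_{H^1(\Omega)}$, and I find no gap in your argument.
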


\begin{proof}
	In this setting one can show that the Moreau--Yosida regularization of $H$, for each $\lambda\in (0,1]$, is given by
	\begin{equation}\label{eq:eg-4-MY-reg}
		\mathcal{H}_{\lambda}(x,p)
		=
		\begin{cases}
			x|p|-\frac{\lambda x^2}{2} &\text{ if } |p|\geq \lambda x,
			\\
			\frac{|p|^2}{2\lambda} &\text{ if } |p|\leq \lambda x,
		\end{cases}\quad 
		\text{and}\quad  
		\frac{\partial \mathcal{H}_{\lambda}}{\partial p}(x,p) =
		\begin{cases}
			\text{sgn}(p)x &\text{ if } |p|\geq \lambda x,
			\\
			\frac{p}{\lambda} &\text{ if } |p|\leq \lambda x,
		\end{cases}
	\end{equation} for $(x,p)\in [0,1]\times\mathbb{R}$.
	{For each $\lambda\in (0,1]$, let $H_{\lambda}$ be defined by}
	\begin{equation}\label{eq:eg-4-H_lam_def}
		H_{\lambda}(x,p)\coloneqq \mathcal{H}_{\lambda}\left(x,p - x\cos\left({x}{\lambda^{-1}}\right)\lambda\right) - \frac{x^2\cos^2({x}{\lambda^{-1}})\lambda}{2}\quad\forall (x,p)\in [0,1]\times\mathbb{R}.
	\end{equation}
	{It is straightforward to use Lemma~\ref{lemma-moreau-yosida-approx-properties} to check that the assumption~\ref{H:reg-family} is satisfied by $\{H_{\lambda}\}_{\lambda\in (0,1]}$. 
		In particular, it is found that $\left\lvert H_\lambda(x,p)-H(x,p)\right\rvert \leq 2 \lambda$ for all $x\in [0,1]$ and all $p\in \R$.} {Note that $H_\lambda(x,0)=0$ for all $x\in [0,1]$.}
	{It also follows immediately from~\eqref{eq:eg-4-MY-reg} and~\eqref{eq:eg-4-H_lam_def} that}
	$$\frac{\partial H_{\lambda}}{\partial p}(x,0) = \frac{\partial \mathcal{H}_{\lambda}}{\partial p}\left(x,- x\cos\left({x}{\lambda^{-1}}\right)\lambda\right) = - x\cos\left({x}{\lambda^{-1}}\right)\quad\forall x\in (0,1).$$
	Now consider the sequence $\{\lambda_j\}_{j\in\mathbb{N}}$ given by $\lambda_j\coloneqq 1/j$, $j\in\mathbb{N}$.
	{Then, for each $j\in \N$,} the regularized problem~\eqref{weakform-moreau-yosida} admits a unique solution $(u_{\lambda_j},m_{\lambda_j})\in H_0^1(\Omega)\times H_0^1(\Omega)$ where $u_{\lambda_j} = 0 $ a.e.\ in $\Omega$ and $m_{\lambda_j}$ is {the unique solution of}
	\begin{equation}\label{prop-3-m_j-eqn-pf}
		\int_{\Omega}\nu \partial_xm_{\lambda_j}\partial_x\phi + m_{\lambda_j}b_j\partial_x\phi\mathrm{d}x = \int_{\Omega}\phi\mathrm{d}x\quad\forall \phi\in H_0^1(\Omega),
	\end{equation} 
	where $b_{j}\coloneqq \frac{\partial H_{\lambda_j}}{\partial p}[\partial_x u_{\lambda_j}] = \frac{\partial H_{\lambda_j}}{\partial p}(x,0)= - x\cos\left(jx\right)$ for all $x\in (0,1)$. 
	{The solution $m_{\lambda_j}$ can be computed explicitly, in particular}
	\begin{equation}
		m_{\lambda_j}(x) =\frac{1}{\nu\gamma_j(x)\int_{\Omega}\gamma_j(s)\mathrm{d}s}\left(\int_{\Omega}s\gamma_j(s)\mathrm{d}s\int_0^x\gamma_j(s)\mathrm{d}s - \int_0^xs\gamma_j(s)\mathrm{d}s\int_{\Omega}\gamma_j(s)\mathrm{d}s\right),
	\end{equation}
	for $x\in [0,1]$, where $\gamma_j(s)\coloneqq \exp\left(-(j\nu)^{-1}x\sin(jx) - (j^2\nu)^{-1}\cos(jx)\right)$ for $s\in [0,1]$. 
	
	Lemma~\ref{lemma-uniform-L-inv-bound} shows that the sequence $\{m_{\lambda_j}\}_{j\in\mathbb{N}}$ is uniformly bounded in $H_0^1(\Omega)$.
	{The Riemann--Lebesgue lemma shows that} the entire sequence $\{m_{\lambda_j}\}_{j\in\mathbb{N}}$ converges weakly to $m\in H_0^1(\Omega)$ which is the unique solution of
	\begin{equation}\label{prop-3-m-eqn-pf}
		\int_{\Omega}\nu \partial_xm\partial_x\phi\mathrm{d}x=\int_{\Omega}\phi\mathrm{d}x\quad\forall\phi\in H_0^1(\Omega),
	\end{equation}
	with $m$ given explicitly by
	\begin{equation}
		m(x)=\frac{1}{2\nu}x(1-x)\quad \forall x\in [0,1].
	\end{equation}
	{Furthermore,} we also have $m_{\lambda_j}\to m$ in $L^{\infty}(\Omega)$ {as $j\to\infty$.}

	We now {prove} that $\{m_{\lambda_j}\}_{j\in\mathbb{N}}$ does not converge strongly to $m$ in the $H^1$-norm {by direct calculation}. Notice that, for each $j\in\mathbb{N}$, \eqref{prop-3-m_j-eqn-pf} implies that 
	\begin{equation}
		\partial_xm_{\lambda_j} = \nu^{-1}x\cos(jx)m_{\lambda_j} - \nu^{-1}(x+c_j)
	\end{equation}
	where $c_j\coloneqq  - \left(\int_{\Omega}\gamma_j(s)\mathrm{d}s\right)^{-1}\int_{\Omega}s\gamma_j(s)\mathrm{d}s \in \R$.
	{It is clear that} $\gamma_j$ converges uniformly to the constant function $1$ on $[0,1]$ as $j\to\infty$. 
	Therefore $c_j\to -\frac{1}{2}$ as $j\to\infty$, and 
	\begin{equation}
		\partial_x(m_{\lambda_j}-m)(x)=\nu^{-1}x\cos(jx)m_{\lambda_j}(x) - \nu^{-1}\left(c_j+\frac{1}{2}\right)\quad\forall x\in [0,1].
	\end{equation} We then get
	\begin{equation}
		\begin{split}
			&\int_{\Omega}|\partial_x(m_{\lambda_j}-m)|^2\mathrm{d}x\\
			&=\nu^{-2}\int_{\Omega}x^2\cos^2(jx)m_{\lambda_j}^2\mathrm{d}x -2\nu^{-2}\left(c_j+\frac{1}{2}\right)\int_{\Omega}x\cos(jx)m_{\lambda_j}\mathrm{d}x +\nu^{-2}\left(c_j+\frac{1}{2}\right)^2
			\\
			&=\frac{1}{2\nu^2}\int_{\Omega}x^2m_{\lambda_j}^2\mathrm{d}x +  \frac{1}{2\nu^2}\int_{\Omega}x^2\cos(2jx)(m_{\lambda_j}^2-m^2)\mathrm{d}x +  \frac{1}{2\nu^2}\int_{\Omega}x^2\cos(2jx)m^2\mathrm{d}x \\
			&\quad\quad-2\nu^{-2}\left(c_j+\frac{1}{2}\right)\int_{\Omega}x\cos(jx)m_{\lambda_j}\mathrm{d}x +\nu^{-2}\left(c_j+\frac{1}{2}\right)^2.
		\end{split}
	\end{equation}
	The convergences $m_{\lambda_j}\to m$ in $L^{\infty}(\Omega)$ and $c_j\to -1/2$ as $j\to\infty$, together with the Riemann-Lebesgue Lemma, imply that
	\begin{equation}\label{mj-no-conv}
		\lim_{j\to\infty}\int_{\Omega}|\partial_x(m_{\lambda_j}-m)|^2\mathrm{d}x = \int_{\Omega}\frac{x^2}{2\nu^2}m^2\mathrm{d}x >0 .
	\end{equation}
	{This shows~\eqref{eq:eg-4-noconv}, i.e.\ the sequence $m_{\lambda_j}$ does not converge strongly to $m$ in the $H^1$-norm. Since the whole sequence is weakly convergent to $m$ in $H^1_0$, we conclude that there is no strongly convergent subsequence with respect to the $H^1_0$-norm.}
\end{proof}

{
	\begin{remark}
		Considering the proof above, it is easy to show that the whole sequence $\frac{\partial H_{\lambda_j}}{\partial p}[\partial_x u_{\lambda_j}]=-x\cos(jx)$ does not satisfy the pre-compactness hypothesis of Corollary~\ref{corollary-strong-H1-conv-density-1}.
	\end{remark}
}
{The conclusion here is that Theorem~\ref{theorem-convergence-moreau-yosida} is sharp in general with regards to the the weak convergence of the densities in $H^1$, and that strong convergence is only possible under some additional hypotheses, such as in Corollary~\ref{corollary-strong-H1-conv-density-1}.}

\subsection{A sequence of regularized problems with two subsequences that converge strongly to different solutions}\label{sec-eg-3}

{We now give an example where the each regularized problem has a unique solution, yet different subsequences of the solutions of the regularized problems converge to different solutions of the MFG PDI.
	This shows that in Theorem~\ref{theorem-convergence-moreau-yosida}, we generally cannot expect convergence for the whole sequence to a unique limit, even when each regularized problem has a unique solution.}

\begin{proposition}\label{prop-eg-2}
	Let $\Omega\subset \mathbb{R}^d$, $d\in \mathbb{N}$, denote a bounded domain with Lipschitz boundary, and let $H:\overline{\Omega}\times\mathbb{R}^d\to \mathbb{R}$ be given by $H(x,p) = \sup_{\alpha\in \overline{B_1(0)}}\{\alpha \cdot p\} = |p|$ {for all $(x,p)\in\overline{\Omega}\times \R^\dim$.}
	Set $F\equiv 0$ on $L^2(\Omega)$ and let $G\equiv 1\in L^2(\Omega)\subset H^{-1}(\Omega)$. 
	There exists a family $\{H_{\lambda_j}\}_{j\in\N}$ satisfying~\ref{H:reg-family}, with $\lambda_j\to 0$ as $j\to\infty$, and two distinct $m_1,\,m_2\in H^1_0(\Omega)$, $m_1\neq m_2$, such that
	\begin{itemize}
		\item the pairs $(0,m_1),\,(0,m_2) \in H_0^1(\Omega)\times H_0^1(\Omega)$ are both {solutions of~\eqref{weakform}}, 
		\item for each $j\in\mathbb{N}$, the regularized problem~\eqref{weakform-moreau-yosida} with $\lambda=\lambda_j$ has a unique solution $(u_{\lambda_j},m_{\lambda_j})\in H^1_0(\Omega)\times H^1_0(\Omega)$,
		\item {We have}
		\begin{equation}\label{eq:eg-3-m_alternating}
			\begin{aligned}
				m_{\lambda_{2k+1}} = m_1, && m_{\lambda_{2k}} = m_2, &&\forall k\in \N.
			\end{aligned}
		\end{equation}
	\end{itemize}
\end{proposition}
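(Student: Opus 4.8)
The plan is to exploit the vanishing of the coupling, $F\equiv 0$: this decouples the HJB equation from the density and forces the value function to vanish identically, which activates the single point of nondifferentiability of $H$ at $p=0$. Since $H(x,p)=|p|=\sup_{\alpha\in\overline{B_1(0)}}\alpha\cdot p$ (so that $b(x,\alpha)=\alpha$, $f\equiv 0$, $\mathcal A=\overline{B_1(0)}$, $L_H=1$), we have $\partial_p H(x,0)=\overline{B_1(0)}$, and once $u=0$ both the PDI's KFP inclusion and the regularized KFP equations reduce to the linear advection--diffusion problems $-\nu\Delta m-\operatorname{div}(m\,\tilde b)=1$ in $\Omega$, $m=0$ on $\partial\Omega$, with $\tilde b$ any constant field in $\overline{B_1(0)}$; by Lemma~\ref{lemma-uniform-L-inv-bound} each such problem has a unique solution. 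The construction will make the regularized drift at the origin alternate between two distinct such fields. First I would note that $u\equiv 0$ is the unique solution of the HJB equation of \eqref{weakform} (immediate, with uniqueness from Lemma~\ref{lemma-HJB-continuous-moreau-yosida} in the case $\lambda=0$); for \eqref{weakform-moreau-yosida} I would build the family $\{H_\lambda\}_{\lambda\in(0,1]}$ so that $H_\lambda(x,0)=0$ for all $\lambda$ and $x$, whence $u_\lambda\equiv 0$ again by Lemma~\ref{lemma-HJB-continuous-moreau-yosida}.

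For the construction, let $\mathcal H_\lambda$ denote the ($x$-independent) Moreau--Yosida regularization of $p\mapsto|p|$, fix two distinct vectors $\beta^{(1)},\beta^{(2)}\in\overline{B_1(0)}$, and set $H_\lambda(\cdot)\coloneqq\mathcal H_\lambda(\cdot+\lambda\beta^{(i)})-\tfrac{\lambda}{2}|\beta^{(i)}|^2$ with $i=1$ when $1/\lambda$ is an odd positive integer, $i=2$ when $1/\lambda$ is an even positive integer, and $H_\lambda\coloneqq\mathcal H_\lambda$ otherwise; take $\lambda_j\coloneqq 1/j$. Using Lemma~\ref{lemma-moreau-yosida-approx-properties}, a short computation shows that this family satisfies~\ref{H:reg-family} (with $\omega(\lambda)=2\lambda$ and common Lipschitz constant $L_H=1$), that $H_\lambda(x,0)=0$, and --- since $|\lambda\beta^{(i)}|\le\lambda$ places the evaluation point in the quadratic branch of $\mathcal H_\lambda$, where $\nabla\mathcal H_\lambda(q)=q/\lambda$ --- that $\frac{\partial H_{\lambda_j}}{\partial p}(x,0)=\beta^{(1)}$ for odd $j$ and $=\beta^{(2)}$ for even $j$. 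Denoting by $m_\beta$ the unique solution of the linear KFP problem with constant drift $\beta$, it then follows that $(0,m_{\beta^{(i)}})$ is the unique solution of \eqref{weakform-moreau-yosida} with $\lambda=\lambda_j$ for the matching parity, that $(0,m_{\beta^{(1)}})$ and $(0,m_{\beta^{(2)}})$ are both weak solutions of \eqref{weakform} with $\tilde b_*\equiv\beta^{(i)}\in D_pH[0]$, and, writing $m_i\coloneqq m_{\beta^{(i)}}$, that \eqref{eq:eg-3-m_alternating} holds.

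The main point requiring care is that $m_1\ne m_2$. I would argue by contradiction: if $m_1=m_2=:m$, then subtracting the two weak KFP formulations and using $\operatorname{div}(m\beta^{(i)})=\beta^{(i)}\cdot\nabla m$ (since $\beta^{(i)}$ is constant) gives $(\beta^{(1)}-\beta^{(2)})\cdot\nabla m=0$ a.e.\ in $\Omega$, while $m$ still solves the KFP equation with source $G=1\ne 0$, so $m\not\equiv 0$. Extending $m$ by zero to $\widehat m\in H^1(\mathbb{R}^d)$ --- valid since $m\in H^1_0(\Omega)$, with $\partial_i\widehat m$ the zero-extension of $\partial_i m$ --- one obtains $\widehat m\in L^2(\mathbb{R}^d)$ whose directional derivative along the nonzero vector $v\coloneqq\beta^{(1)}-\beta^{(2)}$ vanishes on all of $\mathbb{R}^d$; such a function is invariant under translations in the direction $v$ (by a standard mollification argument), and decomposing $\mathbb{R}^d=v^{\perp}\oplus\mathbb{R}v$ and applying Fubini forces $\widehat m\equiv 0$, since otherwise $\int_{\mathbb{R}}|\widehat m(y+sv)|^2\,\mathrm ds=\infty$ on a set of positive measure, contradicting $\widehat m\in L^2$. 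This contradicts $m\not\equiv 0$, so $m_1\ne m_2$. I expect the remaining verifications --- that the constructed family genuinely satisfies~\ref{H:reg-family} uniformly in $\lambda$, and the explicit branch computation of $\frac{\partial H_{\lambda_j}}{\partial p}(\cdot,0)$ --- to be entirely routine. Altogether, the odd- and even-indexed subsequences of $\{(u_{\lambda_j},m_{\lambda_j})\}_{j\in\mathbb{N}}$ are constantly equal to the distinct MFG PDI solutions $(0,m_1)$ and $(0,m_2)$, which shows that the subsequential convergence in Theorem~\ref{theorem-convergence-moreau-yosida} cannot in general be upgraded to convergence of the whole sequence, even when every regularized problem is uniquely solvable.
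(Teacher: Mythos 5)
Your proposal is correct and follows essentially the same route as the paper: decouple via $F\equiv 0$ so that $u=u_\lambda=0$, use a shifted Moreau--Yosida regularization whose shift places the evaluation at $p=0$ in the quadratic branch so that $\frac{\partial H_{\lambda_j}}{\partial p}(\cdot,0)$ alternates between two constant drifts in $\overline{B_1(0)}$, and conclude $m_1\neq m_2$ by a directional Poincar\'e-type argument. The only differences are cosmetic (the paper modulates the shift by $\cos(\lambda^{-1})$ with $\lambda_j=1/(\pi j)$ rather than a parity-based piecewise definition, and cites the directional Poincar\'e inequality where you prove it via zero-extension and Fubini).
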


{Note that it is a trivial consequence of~\eqref{eq:eg-3-m_alternating} that $m_{\lambda_{2k+1}}\to m_1$ and $m_{\lambda_{2k}}\to m_2$ as $k\to \infty$, in any norm.}

\begin{proof}
	{It is clear that the unique solution $u$ of the HJB equation~\eqref{weakform1} is simply $u=0$ in $\Omega$ in the case where $F\equiv 0$ and $H(x,p)=|p|$.}
	{Hence,} the set of solutions to the weak MFG PDI~\eqref{weakform} is {a non-singleton set that consists} of all pairs $(u,m)$ where $u=0$ a.e.\ in $\Omega$ and $m\in H_0^1(\Omega)$ solves the KFP equation~\eqref{weakform2} for some $\tilde{b}_*\in D_pH[u]=\{b\in L^\infty(\Omega;\R^\dim)\colon \norm{b}_{L^\infty(\Omega;\R^\dim)}\leq L_H\}$.
	{Let $m_1$ and $m_2$ denote the corresponding densities for the constant vector fields $b_1=(1,0,\dots,0)\in D_pH[u]$ and $b_2=(-1,0,\dots,0)\in D_pH[u]$.}
	
	Let $\mathcal{H}_{\lambda}$, $\lambda\in (0,1]$, denote the Moreau--Yosida regularization of $H(x,p) = |p|$, which is given by 
	\begin{equation} 
		\mathcal{H}_{\lambda}(x,p)
		=
		\begin{cases}
			|p|-\frac{\lambda}{2} &\quad \text{if } |p|\geq \lambda,
			\\
			\frac{|p|^2}{2\lambda} &\quad \text{if } |p|\leq \lambda,
		\end{cases}\quad 
		\text{and}\quad  
		\frac{\partial \mathcal{H}_{\lambda}}{\partial p}(p) =
		\begin{cases}
			|p|^{-1}p &\quad \text{if } |p|\geq \lambda,
			\\
			\frac{p}{\lambda} &\quad \text{if } |p|\leq \lambda.
		\end{cases}
	\end{equation} 
	Consider the family of regularized Hamiltonians $\{H_{\lambda}\}_{0<\lambda\leq 1}$ defined by 
	\begin{equation}
		H_{\lambda}(x,p)\coloneqq \mathcal{H}_{\lambda}(p - {q}_{\lambda}) - \frac{\cos^2(\lambda^{-1})\lambda}{2}\quad \forall (x,p)\in\overline{\Omega}\times \mathbb{R}^d,
	\end{equation}
	where ${q}_{\lambda}\coloneqq (\cos(\lambda^{-1})\lambda,0,0,\cdots,0)\in \R^\dim$ for each $\lambda\in (0,1]$. 
	{
		It is straightforward to use Lemma~\ref{lemma-moreau-yosida-approx-properties} to check that the assumption~\ref{H:reg-family} is satisfied by $\{H_{\lambda}\}_{\lambda\in (0,1]}$.
		Indeed, the Lipschitz continuity, convexity, and continuous differentiability of $H_\lambda$ with respect to $p$ all follow immediately from Lemma~\ref{lemma-moreau-yosida-approx-properties}.
		Furthermore, the triangle inequality and Lipschitz continuity~\eqref{bounds:lipschitz}, noting that $L_H=1$ in this example, imply that, for any $x\in\overline{\Omega}$ and any $p\in \R^\dim$,
		\begin{equation}
			|H_\lambda(x,p)-H(x,p)|\leq \left\lvert\mathcal{H}_{\lambda}(x,p-q_\lambda)-H(x,p-q_\lambda)\right\rvert + \lvert q_\lambda \rvert + \frac{\lambda}{2} \leq 2 \lambda,
		\end{equation}
		where we have used the bound~\eqref{moreau-yosida-bounds:reg-unif-approx_1} and $|q_\lambda|\leq \lambda$.
	}Observe that $H_{\lambda}(x,0) = 0$ for all $\lambda\in (0,1)$.  
	Now, take the sequence $\{\lambda_j\}_{j\in\mathbb{N}}$ given by $\lambda_j\coloneqq 1/(\pi j)$, $j\in\mathbb{N}$. 
	Then, by considering the regularized problem~\eqref{weakform-moreau-yosida} with $\lambda = \lambda_j$ for $j\in\mathbb{N}$, it is clear that the unique solution of the regularized HJB equation~\eqref{weakform1-space-time-moreau-yosida} is $u_{\lambda_j} =0$ in $\Omega$.
	Hence $m_{\lambda_j}$ is the unique solution of~\eqref{weakform2-space-time-moreau-yosida} where the advective vector field $\frac{\partial {H}_{\lambda_j}}{\partial p}[\nabla {u}_{\lambda_j}]$ is given by
	\begin{equation}
		\frac{\partial H_{\lambda_j}}{\partial p}[\nabla u_{\lambda_j}]=
		\frac{\partial \mathcal{H}_{\lambda_j}}{\partial p}(-q_{\lambda_j}) = ((-1)^{j+1},0,0,...,0)\in \R^\dim, \quad\forall j\in\N.
	\end{equation}
	{It is then clear that $ \frac{\partial H_{\lambda_j}}{\partial p}[\nabla u_{\lambda_j}]$ equals $b_1$ for all odd $j$ and equals $b_2$ for all even $j$.}
	{This implies~\eqref{eq:eg-3-m_alternating}.}
	
	To conclude, we now show that $m_1\neq m_2$ in $H_0^1(\Omega)$. Suppose for contradiction that $m_1=m_2=:m$ in $H_0^1(\Omega)$. We then obtain that $\int_{\Omega}m({v}_1-{v}_2)\cdot\nabla \phi\mathrm{d}x = 0\Longleftrightarrow \int_{\Omega}m\partial_{x_1}\phi\mathrm{d}x = 0$ for all $\phi\in H_0^1(\Omega)$. This then implies that $\partial_{x_1}m = 0$ a.e.\ in $\Omega$. But, since $\Omega$ is bounded and $m\in H^1_0(\Omega)$, the Poincar\'e inequality $\|m\|_{L^2(\Omega)}\leq C_{\Omega}\|\partial_{x_1}m\|_{L^2(\Omega)}$ {implies that} $m=0$ in $L^2(\Omega)$, which gives $m_1=m_2=m=0$ in $H_0^1(\Omega)$. But this contradicts~\eqref{weakform2} where the r.h.s.\ {is nonzero with $G\equiv 1 \in L^2(\Omega)\subset H^{-1}(\Omega)$}. Hence we see that $m_1\neq m_2$ in $H_0^1(\Omega)$.
\end{proof}

{
	\begin{remark}
		A further perspective on~Proposition~\ref{prop-eg-2} is that, in general, a single choice of regularizing sequence might not be sufficient to approximate all solutions of the PDI.
		Thus, in applications to problems with nonunique solutions, it may be necessary in some cases to consider multiple different regularizations to approximate different solutions of the PDI.
	\end{remark}
}

{
	\begin{remark}
		Let us relate this example to the discussion in Section~\ref{sec-introduction}.
		Observe that the case $F\equiv 0$ and $H(x,p) = \sup_{\alpha\in \overline{B_1(0)}}\{\alpha \cdot p\} =|p|$ (where $b\equiv \alpha$ and $f\equiv 0$) corresponds to a model of a MFG where the underlying control problem of the players is quite degenerate, since the players' cost is independent of both the controls and the population distribution. 
		The players are thus entirely decoupled from one another and are free to choose among infinitely many optimal controls, so it is natural that there should be infinitely many solutions of the PDI.
		This is in stark contrast with the regularized problems, where the differentiability of the regularized Hamiltonian implies the uniqueness of the optimal drift, compare with~\eqref{intro-convv-formula}, and thus fixes the players' dynamics.
		This gives a concrete illustration of some essential differences in the structure of the Nash equilibria between MFG with differentiable Hamiltonians and those with nondifferentiable Hamiltonians.
	\end{remark}
}

\bibliographystyle{siamplain_NoURL}
\bibliography{p4-references-revision}

\end{document}